\DeclareMathOperator{\Coeff}{Coeff}
\DeclareMathOperator*{\Res}{Res}
\newcommand\note[1]{\mbox{}\marginpar{ \scriptsize\raggedright
\hspace{1pt}\color{red} #1}}
\numberwithin{equation}{section}
\numberwithin{equation}{subsection}
\theoremstyle{plain}
\newtheorem*{theorem*}{Theorem}
\newtheorem{theorem}[equation]{Theorem}
\newtheorem{lemma}[equation]{Lemma}
\newtheorem{proposition}[equation]{Proposition}
\newtheorem{corollary}[equation]{Corollary}
\newtheorem{thm}[equation]{Theorem}
\theoremstyle{definition}
\newtheorem{example}[equation]{Example}
\newtheorem{remark}[equation]{Remark}
\newtheorem{definition}[equation]{Definition}
\newtheorem{defn}[equation]{Definition}
\def\C{\mathbb C}
\def\Q{\mathbb Q}
\def\Z{\mathbb Z}
\def\im{{\rm Im}}
\newcommand{\caly}{{\mathcal Y}}
\newcommand{\calz}{{\mathcal Z}}
\newcommand{\calv}{{\mathcal V}}
\newcommand{\calt}{{\mathcal T}}
\newcommand{\cali}{{\mathcal I}}
\newcommand{\calO}{{\mathcal O}}
\newcommand{\calS}{{\mathcal S}}
\newcommand{\calL}{\mathcal{L}}
\newcommand{\tX}{\widetilde{X}}
\newcommand{\reg}{{\rm reg}}
\newcommand{\cX}{{\mathcal X}}
\newcommand{\cO}{{\mathcal O}}
\newcommand{\bP}{{\mathbb P}}
\newcommand*{\linebundle}{\mathcal{L}}
\newcommand{\bC}{{\mathbb C}}
\newcommand{\cF}{{\mathcal F}}
\newcommand{\eca}{{\rm ECa}}
\newcommand{\pic}{{\rm Pic}}
\newcommand{\m}{\mathfrak{m}}\newcommand{\fr}{\mathfrak{r}}
\newcommand{\mfl}{\mathfrak{L}}
\newcommand{\bt}{{\mathbf t}}
\newcommand{\bZ}{{\mathbb{Z}}}
\newcommand{\bQ}{{\mathbb{Q}}}
\newcommand{\omegl}{\lambda}
\author{J\'anos Nagy}
\address{Rényi Alfréd Institution of Mathematics}
\email{janomo4@gmail.com, janomo@renyi.hu}
\title{Invariants of relatively generic structures on normal surface singularities}
\begin{document}

\keywords{normal surface singularities, rational homology spheres, relative generic analytic structure, analytic Poincar\'e series, base point, multiplicity}

\subjclass[2010]{Primary. 32S05, 32S25, 32S50, 57M27
Secondary. 14Bxx, 14J80, 57R57}

\begin{abstract}
In the present article we work out a relative setup of generic structures on surface singularities.
We fix an analytic type on a subgraph of a  rational homology sphere resolution graph $\mathcal{T}$ and we choose a relatively generic normal surface singularity $\tX$ with resolution graph $\mathcal{T}$.
We provide formulae for the geometric genus and the analytical Poincaré series of $\tX$. We determine the base point structure of natural line bundles on $\tX$ and give a lower bound on the multiplicity of $\tX$ which is expected to be sharp.
We prove similar results about cohomology numbers of relatively generic line bundles on every singularity with rational homology sphere link.
\end{abstract}

\maketitle

\linespread{1.2}


\pagestyle{myheadings} \markboth{{\normalsize  J. Nagy}} {{\normalsize Line bundles}}


\section{Introduction}\label{s:intr}

For a given negative definite resolution graph $\mathcal{T}$, usually there are several analytically different surface singularities with resolution graph $\mathcal{T}$.
The topology of the link gives restrictions for the analytic invariants of the singularity and there are topological candidates for these invariants for special families of analytic types.
On the other hand, most of the invariants, like the multiplicity, geometric genus or cohomology numbers of line bundles can change if we change the analytic structure.

In \cite{NNA2}, the authors, following the local deformation theory of resolutions of normal surface singularities developed by Laufer\cite{LaDef1}, computed several invariants of generic normal surface singularities with fixed resolution graph, like the geometric genus and analytic Poincaré series or the analytic semigroup. There are however other invariants, like multiplicity or embedded dimension we wish to compute in the case of generic normal surface singularities.

In the present article we work out a relative setup of generic structures on surface singularities. We call a negative definite resolution graph a rational homology sphere resolution graph
if the graph $3$-manifold corresponding to it is a rational homology sphere. We will only deal with rational homology sphere resolution graphs and we will sometimes use the notation 
QHS for them.

We fix an analytic type on a subgraph of a QHS resolution graph $\mathcal{T}$ and we choose a relatively generic normal surface singularity with resolution graph $\mathcal{T}$. We provide formulae for the geometric genus and the analytical Poincaré series of the singularity. We also determine the base point structure of natural line bundles and provide a lower bound for the multiplicity of the singularity, which we excpect to be sharp. 
Similarly assume that we have an arbitrary singularity with resolution $\tX$ with rational homology sphere resolution graph $\mathcal{T}$, and we fix a line bundle on a subresolution and a relatively generic line bundle on the big resolution. We compute the cohomology numbers of this new line bundle.

The answers to these questions are intresting on their own. The main motivation behind these results however is that they give possibilities for inductive proofs of problems
regarding generic surface singularities. We will determine the cohomology numbers of natural line bundles on cycles, or the class of images of Abel maps on generic normal surface singularities in subsequent manuscripts. Parallely in another manuscript with A. Némethi, we determine the multiplicity of generic surface singularities, where one can actually show that the lower bound is indeed sharp (it is in fact the most difficult part).

\bigskip

Consider two QHS resolution graphs $\mathcal{T}_1 \subset \mathcal{T} $ with vertex sets $\calv_1 \subset \calv$, where $\calv = \calv_1 \cup \calv_2$.
Assume that we have a resolution $\tX$ for a normal surface singularity with resolution graph $\mathcal{T}$ and denote its subsingularity $\tX_1$ corresponding to the subgraph $\mathcal{T}_1$. Fix a Chern class $l'$ and  denote the surjective map between the Picard groups $r: \pic^{l'}(\tX) \to \pic^{R_1(l')}(\tX_1)$, where $R_1(l')$ is the cohomological restriction of the Chern class $l'$. With these notations we have the following theorem:

\begin{theorem*}\label{th:hegy2rel}\textbf{A}
Fix an arbitrary line bundle $\mfl\in \pic^{R_1(l')}(\tX_1)$, then for any line bundle $\calL\in r^{-1}(\mfl)$ one has:
\begin{equation*}\label{eq:genericLrel}
\begin{array}{ll}h^1(\tX,\calL)\geq \chi(-l')-
\min_{0\leq l,\ l\in L} \{\,\chi(-l'+l) -
h^1(\tX_1, \mfl(-l))\, \}.\end{array}\end{equation*}
Furthermore, if $\calL$ is generic in $r^{-1}(\mfl)$ then we have equality.
\end{theorem*}

Consider two QHS resolution graphs $\mathcal{T}_1 \subset \mathcal{T} $ with vertex sets $\calv_1 \subset \calv$, where $\calv = \calv_1 \cup \calv_2$.
Assume that we have a surface singularity with resolution $\tX_1$ and resolution graph $\mathcal{T}_1$ and fix cuts $D_{v_2}$ for vertices $v_2 \in \calv_2$ which have got a neighbour in $\calv_1$.

\begin{theorem*}\textbf{B}
Assume that $\tX$ has a relatively generic analytic stucture (defined in section 5) with resolution graph $\mathcal{T}$, corresponding to the analytic structure $\tX_1$ and the cuts $D_{v_2}$ ($E_{v_2} \cap \tX_1 = D_{v_2}$ for all vertices $v_2 \in \calv_2$).

For the geometric genus, $p_g(\tX)$ we have the following formula:

\begin{equation*}
p_g(\tX) = 1 - \min_{E \leq l }(\chi(l)-   h^1(\calO_{\tX_1}(-l) )).                              
\end{equation*}

\end{theorem*}

\medskip

The next theorem determines the base point structure of the line bundle $\calO_{\tX}(-Z_{max})$, where $Z_{max}$ is the maximal ideal cycle of $\tX$ (defined in section 2).

\begin{theorem*} \textbf{C}
Assume that $\tX$ has a relatively generic analytic stucture with resolution graph $\mathcal{T}$, corresponding to the analytic structure $\tX_1$ and the cuts $D_{v_2}$ ($E_{v_2} \cap \tX_1 = D_{v_2}$ for all vertices $v_2 \in \calv_2$).

Assume that the line bundle $\calL = \calO_{\tX_1}(- Z_{max})$ has no base points on $\tX_1$, then:

\medskip

1) The line bundle $\calO_{\tX}(- Z_{max})$ has no base points at the intersection points of exceptional divisors, and it has got only simple base points.

2) If $v \in \calv$, then $\calO_{\tX}(- Z_{max})$ has base point along the exceptional divisor $E_v$ if and only if there exists a cycle $A \geq E_v$, such that:

\begin{equation*}
\chi(Z_{max}) - h^1(\tX_1, \calL) + 1 = \chi(Z_{max} + A) - h^1(\tX_1, \calL( -A)).
\end{equation*}
Furthermore if there is a base point on $E_v$, then there are $-(l_d, E_v)$ distinct base points along the exceptional divisor $E_v$.
\end{theorem*}
\begin{remark}
We will prove Theorem \textbf{C} more generally, in fact we determine the base point structure of all natural line bundles on $\tX$. Notice also that $Z_{max}$ is determined combinatorially
from the resolution graph $\mathcal{T}$ and the analytic invariants of the subsingularity $\tX_1$ in section 7) for a relatively generic singularity.
\end{remark}

Using Theorem \textbf{C} we give a lower bound on the multiplicity of a relatively generic singularity, which we except to be sharp.

\begin{theorem*} \textbf{D}
For a vertex $v \in \calv$ let $t_v$ be the maximal integer, such that there exists a cycle $A \geq t_v \cdot E_v$ for which:
\begin{equation}\label{baseki}
\chi(Z_{max}) - h^1(\tX_1, \calL) + 1 = \chi(Z_{max} + A) - h^1(\tX_1, \calL( -A)). 
\end{equation}

Then we have:

\medskip

1) $Mult(\tX) \geq -Z_{max}^2 - \sum_{v \in \calv} (Z_{max}, E_v) \cdot t_v$.

\medskip

2) Suppose that $t_v = 0$ for all vertices $v \in \calv$, then $Mult(\tX) = -Z_{max}^2$.
\end{theorem*}

\bigskip

\subsection{Motivations for the results:}

In the following we explain briefly why the terminology and the ideas in the present paper is important in the results mentioned above:

The authors in \cite{NNA2} determined the geometric genus of a generic normal surface singularity $\tX$ corresponding to a fixed resolution graph $\mathcal{T}$. The main idea was that if the analytic structure of $\tX$ is generic, then for many Chern classes $l'$, $h^1(\calO_{\tX}(-l'))$ is equal to the $h^1$ of the generic line bundle in $\pic^{l'}(\tX)$, which turns out to be topological.
In many situations with the language of relatively generic analytic structures and relatively generic line bundles we will be able to realise the natural line bundles on generic singularities as (relatively) generic line bundles. It means that we get a more precise formulation of the philosophical idea that on generic singularities $\tX$ the natural line bundles behave like generic line bundles in the corresponding Picard groups (which are much easier to investigate).

For another motivation, fix a singularity with resolution $\tX_1$ and with a QHS  resolution graph $\mathcal{T}_1$. Consider a QHS resolution graph $\mathcal{T}$, such that $\mathcal{T}_1 \subset \mathcal{T}$ and a relative generic singularity with resolution $\tX$ corresponding to $\mathcal{T}$ and $\tX_1$. We get many of the analytic invariants of the relatively generic singularity $\tX$ corresponding to $\mathcal{T}$ and $\tX_1$ by a combinatorial way from the combinatorics of the resolution graph $\mathcal{T}$ and the analytic invariants of $\tX_1$. However the analytic invariants of the singularity $\tX$ are restricted and connected to each other, so one hopes to get information and more restrctions on the possible values of the analytic invariants of $\tX_1$ combinatorially this way, which is a topic of further investigation.

\subsection{Structure of the article:}

In section 2) we summarise the necessary topological and analytic invariants of normal surface singularities.
In section 3) we recall the notations and necessary results from \cite{NNA1} about effective Cartier divisors, Abel maps and differential forms.
In section 4) we recall the results of Laufer regarding deformations of the analytic structure on a resolution space of a normal surface singularity with fixed resolution graph.
In section 5) we define the analouges of the space of effective Cartier divisors, Abel maps and generic line bundles in the relative case, and we prove our main Theorem \textbf{A} about the cohomology numbers of relatively generic line bundles. 
In section 6) we clarify what we mean on a relatively generic analytic structure on a resolution graph $\mathcal{T}$ with respect to a fixed analytic structure $\tX_1$ on a subgraph 
$\mathcal{T}_1$ and cuts $D_{v_2}$ on $\tX_1$. 
In section 7) we provide formulae for the cohomology numbers of natural line bundles on cycles of relatively generic singularities. We determine the analytic Poincaré series and the analytic semigroup and we prove Theorem \textbf{B}.
In section 8) we determine the base points of natural line bundles on relatively generic singularities and we prove Theorem \textbf{C}.
In section 9) we prove Theorem \textbf{D}.
In section 10) as a consequence of our main results we prove a few smaller corollaries.

\textbf{Acknowledgements.} I would like to thank András Stipsicz and András Némethi for many helpful comments and discussions.
The author was supported by OTKA Elvonal KKP126683. 

\section{Preliminaries}\label{s:prel}

\subsection{The resolution}\label{ss:notation}
Let $(X,o)$ be a germ of a complex analytic normal surface singularity,
 and let us fix  a good resolution  $\phi:\widetilde{X}\to X$ of $(X,o)$.
We denote the exceptional curve $\phi^{-1}(0)$ by $E$, and let $\cup_{v\in\calv}E_v$ be
its irreducible components. Set also $E_I:=\sum_{v\in I}E_v$ for any subset $I\subset \calv$.
Let us denote the support of a cycle $l=\sum n_vE_v$ by $|l|=\cup_{n_v\not=0}E_v$.
For more details see \cite{Nfive}.
\subsection{Topological invariants}\label{ss:topol}
Let $\calt$ be the dual resolution graph
associated with $\phi$;  it  is a connected graph.
Then $M:=\partial \widetilde{X}$ can be identified with the link of $(X,o)$, it is also
an oriented  plumbed 3--manifold associated with $\calt$.
We will assume that  $M$ is a rational homology sphere,
or, equivalently,  $\mathcal{T}$ is a tree and all genus
decorations of $\mathcal{T}$ are zero. We use the same
notation $\mathcal{V}$ for the set of vertices, and $\delta_v$ for the valency of a vertex $v$.

The homology group $L:=H_2(\widetilde{X},\mathbb{Z})$, endowed
with a negative definite intersection form  $I=(\,,\,)$, is a lattice. It is
freely generated by the classes of oriented 2--spheres $\{E_v\}_{v\in\mathcal{V}}$.
 The dual lattice $L':=H^2(\widetilde{X},\mathbb{Z})$ is generated
by the (anti)dual classes $\{E^*_v\}_{v\in\mathcal{V}}$ defined
by $(E^{*}_{v},E_{w})=-\delta_{vw}$, the opposite of the Kronecker symbol.
The intersection form embeds $L$ into $L'$, then $H_1(M,\mathbb{Z})\simeq L'/L$, denoted by $H$.
Usually one also identifies $L'$ with those rational cycles $l'\in L\otimes \Q$ for which
$(l',L)\in\Z$, or, $L'={\rm Hom}_\Z(L,\Z)$.


All the $E_v$--coordinates of any $E^*_u$ are strict positive.
We define the Lipman cone as $\calS':=\{l'\in L'\,:\, (l', E_v)\leq 0 \ \mbox{for all $v$}\}$.
It is generated over $\bZ_{\geq 0}$ by $\{E^*_v\}_v$.

\subsection{Analytic invariants}\label{ss:analinv}
The group ${\rm Pic}(\widetilde{X})$
of  isomorphism classes of analytic line bundles on $\widetilde{X}$ appears in the (exponential) exact sequence
\begin{equation}\label{eq:PIC}
0\to {\rm Pic}^0(\widetilde{X})\to {\rm Pic}(\widetilde{X})\stackrel{c_1}
{\longrightarrow} L'\to 0, \end{equation}
where  $c_1$ denotes the first Chern class. Here
$ {\rm Pic}^0(\widetilde{X})=H^1(\widetilde{X},\calO_{\widetilde{X}})\simeq
\C^{p_g}$, where $p_g$ is the {\it geometric genus} of
$(X,o)$. $(X,o)$ is called {\it rational} if $p_g(X,o)=0$.
 Artin in \cite{Artin62,Artin66} characterized rationality topologically
via the graphs; such graphs are called `rational'. By this criterion, $\Gamma$
is rational if and only if $\chi(l)\geq 1$ for any effective non--zero cycle $l\in L_{>0}$.
Here $\chi(l)=-(l,l-Z_K)/2$, where $Z_K\in L'$ is the anticanonical cycle
identified by adjunction formulae
$(-Z_K+E_v,E_v)+2=0$ for all $v$.

The epimorphism
$c_1$ admits a unique group homomorphism section $l'\mapsto s(l')\in {\rm Pic}(\widetilde{X})$,
 which extends the natural
section $l\mapsto \calO_{\widetilde{X}}(l)$ for integral cycles $l\in L$, in a way
such that $c_1(s(l'))=l'$  \cite{Nfive}.
We call $s(l')$ the  {\it natural line bundles} on $\widetilde{X}$.
By  the very  definition, $\calL$ is natural if and only if some power $\calL^{\otimes n}$
of it has the form $\calO_{\tX}(-l)$ for some $l\in L$.

\bekezdes $\mathbf{{Pic}(Z)}.$ \
Similarly, if $Z\in L_{>0}$ is a non--zero effective integral cycle such that its support is $|Z| =E$,
and $\calO_Z^*$ denotes
the sheaf of units of $\calO_Z$, then ${\rm Pic}(Z)=H^1(Z,\calO_Z^*)$ is the group of isomorphism classes
of invertible sheaves on $Z$. It appears in the exact sequence
  \begin{equation}\label{eq:PICZ}
0\to {\rm Pic}^0(Z)\to {\rm Pic}(Z)\stackrel{c_1}
{\longrightarrow} L'\to 0, \end{equation}
where ${\rm Pic}^0(Z)=H^1(Z,\calO_Z)$.

If $Z_2\geq Z_1$ then there are natural restriction maps,
${\rm Pic}(\widetilde{X})\to {\rm Pic}(Z_2)\to {\rm Pic}(Z_1)$, similar restrictions are defined at  ${\rm Pic}^0$ level too.
These restrictions are homomorphisms of the exact sequences  (\ref{eq:PIC}) and (\ref{eq:PICZ}).

Furthermore, we define a section of (\ref{eq:PICZ}) by
$s_Z(l'):=
{\mathcal O}_{\widetilde{X}}(l')|_{Z}$, this also satisfies $c_1\circ s_Z={\rm id}_{L'}$. We write  ${\mathcal O}_{Z}(l')$ for $s_Z(l')$, and we call them
 {\it natural line bundles } on $Z$.

We also use the notations ${\rm Pic}^{l'}(\widetilde{X}):=c_1^{-1}(l')
\subset {\rm Pic}(\widetilde{X})$ and
${\rm Pic}^{l'}(Z):=c_1^{-1}(l')\subset{\rm Pic}(Z)$
respectively. Multiplication by $\calO_{\widetilde{X}}(-l')$, or by
$\calO_Z(-l')$, provide natural affine--space isomorphisms
${\rm Pic}^{l'}(\widetilde{X})\to {\rm Pic}^0(\widetilde{X})$ and
${\rm Pic}^{l'}(Z)\to {\rm Pic}^0(Z)$.

\bekezdes\label{bek:restrnlb} {\bf Restricted natural line bundles.}
The following warning is appropriate.
Let $\tX_1$ be a connected small tubular neighbourhood
of the union of some of the exceptional divisors (hence $\tX_1$ also stays as the resolution
of the singularity obtained by contraction of that union of exceptional  curves). One can repeat the definition of
natural line bundles at the level of $\tX_1$ as well (as a splitting of (\ref{eq:PIC}) applied for
$\tX_1$). However, the restriction to
$\tX_1$ of a natural line bundle of $\tX$ is usually not natural on $\tX_1$:
$\calO_{\tX}(l')|_{\tX_1}\not= \calO_{\tX_1}(R(l'))$
 (where $R:H^2(\tX,\Z)\to H^2(\tX_1,\Z)$ is the natural cohomological 
 restriction), though their Chern classes coincide.

Therefore, in inductive procedure when such restriction is needed,
 we will deal with the family of {\it restricted natural line bundles}, this means the following:

If we have two resolution spaces $\tX_1 \subset \tX$ with resolution graphs $\mathcal{T}_1 \subset \mathcal{T}$ and we have a Chern class $l' \in L'$, then we denote 
by $\calO_{\tX_1}(l') = \calO_{\tX}(l') | \tX_1$ the restriction of the natural line bundle $\calO_{\tX}(l')$.
Similarly if $Z$ is an effective integer cycle on $\tX$ with maybe $|Z| \neq E$, then we denote $\calO_{Z}(l') = \calO_{\tX}(l') | Z$.

Furthermore if $\calL$ is a line bundle on $\tX_1$, then we denote $\calL(l') = \calL \otimes \calO_{\tX}(l')$.
Similarly if $Z$ is  an effective integer cycle on $\tX$ and $\calL$ is a line bundle on the cycle $Z$, then we denote $\calL(l') = \calL \otimes \calO_Z(l')$.

\bekezdes \label{bek:ansemgr} {\bf The analytic semigroups.} \
By definition, the analytic semigroup associated with the resolution $\tX$ is

\begin{equation}\label{eq:ansemgr}
\calS'_{an}:= \{l'\in L' \,:\,\calO_{\tX}(-l')\ \mbox{has no  fixed components}\}.
\end{equation}
It is a subsemigroup of $\calS'$. One also sets $\calS_{an}:=\calS_{an}'\cap L$, a subsemigroup
of $\calS$. In fact, $\calS_{an}$ consists of the restrictions   ${\rm div}_E(f)$ of the divisors
${\rm div}(f\circ \phi)$ to $E$, where $f$ runs over $\calO_{X,o}$. Therefore, if $s_1, s_2\in \calS_{an}$, then
${\rm min}\{s_1,s_2\}\in \calS_{an}$ as well (take the generic linear combination of the corresponding functions).
In particular,  for any $l\in L$, there exists a {\it unique} minimal
$s\in \calS_{an}$ with $s\geq l$.

Similarly, for any $h\in H=L'/L$ set $\calS'_{an,h}:\{l'\in \calS_{an}\,:\, [l']=h\}$.
Then for any  $s'_1, s'_2\in \calS_{an,h}$ one has
${\rm min}\{s'_1,s'_2\}\in \calS_{an,h}$, and so
for any $l'\in L'$   there exists a unique minimal
$s'\in \calS_{an,[l']}$ with $s'\geq l'$.

For any $l'\in\calS_{an}'$ there exists an ideal sheaf $\cali(l')$ with 0--dimensional support along $E$ such that
 $H^0(\tX,\calO_{\tX}(-l'))\cdot \calO_{\widetilde{X}}=\calO_{\widetilde{X}}(-l')\cdot \cali(l')$.
The ideal $\cali(l') $ describes the space of base points of the line bundle $\calO_{\tX}(-l')$.
If $l'\in\calS'_{an}$ and  the divisor of a generic global section of $\calO_{\tX}(-l')$ intersects
$E_v$, then $(l',E_v)<0$. In particular, if $p\in E_v$ is a  base point then necessarily $(l',E_v)<0$.

Choose a base point $p$ of $\calO_{\tX}(-l')$, and assume that it is a regular point of $E$, and that $\cali(l')_p$
 in the
local ring $\calO_{\tX,p}$ is $(x^t,y)$, where $x,y$ are some local coordinates at $p$  with $\{x=0\}=E$ (locally),
and $t\geq 1$.
Then we say that $p$ is a {\it $t$--simple base point}. In such cases we write
$t=t(p)$. Furthermore, $p$ is called {\it simple}
if it is $t$--simple for some $t\geq 1$.

Consider a Chern class $l' \in S'_{an}$ and a base point $p \in E_{v, reg}$ of a natural line $\calO_{\tX}(-l')$, which is simple, then there is another interpretation of the positive integer $t$, such that $p$ is $t$-simple.

Assume that we have a generic section $s \in H^0(\calO_{\tX}(-l'))$ and $D = |s|$, then we know that $D$ has a cut $D'$, which is transversal at the base point $p$.
Blow up the exceptional divisor $E_v$ along the cut $D'$ sequentially, so let us blow up first at the point $p$ and let the new exceptional divisor be $E_{v_1}$. Denote
the strict transform of the cut $D'$ with the same notation. After that blow up $E_{v_1}$ at the intersection point $E_{v_1} \cap D'$ and let the new exceptional divisor be $E_{v_2}$ and so on.

Let us denote the given resolution at the $i$-th step by $\tX_i$ with the blow up map $b_i : \tX_i \to \tX$ and consider the natural line bundle $\calL_i = \calO_{\tX_i}(-b_i^*(l') - \sum_{1 \leq j \leq i} j \cdot E_{v_j}) = \calO_{\tX_i}(D_{st})$, where $D_{st}$ is the strict transform of the divisor $D$.
Let $t$ be the minimal number, such that $\calL_t$ hasn't got a base point along the excpetional divisor $E_{v_t}$.
Equivalently $t$ is the maximal integer, such that $H^0(\tX_t, \calL_t) = H^0(\calO_{\tX_t}( - b_t^*(l')))$ and $h^1(\tX_t, \calL_t)  = h^1(\calO_{\tX}(-l')) + t$. 
In this case $p$ is a $t$-simple base point ot the natural line bundle $\calO_{\tX}(-l')$.

\subsection{Notations.} We will write $Z_{min}\in L$ for the  {\it minimal} (or fundamental, or Artin) cycle, which is
the minimal non--zero cycle of $\calS'\cap L$ \cite{Artin62,Artin66}. Yau's {\it maximal ideal cycle}
$Z_{max}\in L$ defines the  divisorial part of the pullback of the maximal ideal $\m_{X,o}\subset \calO_{X,o}$, i.e.
 $\phi^*{\m_{X,o}}\cdot \calO_{\widetilde{X}}=\calO_{\widetilde{X}}(-Z_{max})\cdot \cali$,
where $\cali$ is an ideal sheaf with 0--dimensional support \cite{Yau1}. In general $Z_{min}\leq Z_{max}$.

\section{Effective Cartier divisors and Abel maps}

  In this section we review some needed material from \cite{NNA1}.

We fix a good resolution $\phi:\tX\to X$ of a normal surface singularity, whose link is a rational homology sphere. 

\subsection{} \label{ss:4.1}
Let us fix an effective integral cycle  $Z\in L$, $Z\geq E$. (The restriction $Z\geq E$ is imposed by the
easement of the presentation, everything can be adopted  for $Z>0$).

Let $\eca(Z)$  be the space of effective Cartier (zero dimensional) divisors supported on  $Z$.
Taking the class of a Cartier divisor provides  a map
$c:\eca(Z)\to \pic(Z)$.
Let  $\eca^{l'}(Z)$ be the set of effective Cartier divisors with
Chern class $l'\in L'$, that is,
$\eca^{l'}(Z):=c^{-1}(\pic^{l'}(Z))$.

 \begin{theorem}\cite{NNA1}\label{th:smooth} If $l'\in-\calS'$ then the following facts hold.

  (1)  $\eca^{l'}(Z)$ is a smooth variety of dimension $(l',Z)$.

  (2) The natural restriction  $r:\eca^{l'}(Z)\to \eca^{l'}(E)$ is a
  locally trivial  fiber bundle with fiber isomorphic to an affine space. Hence,
 the homotopy type of $\eca^{l'}(Z)$ is independent of the choice of $Z$ and
 it depends only on the topology of $(X,o)$.
 \end{theorem}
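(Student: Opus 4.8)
The plan is to get (1) from the deformation theory of zero-dimensional subschemes, and (2) by reducing along the chain $E\le Z'<Z$ one step $Z=Z'+E_{v_0}$ at a time, with the topological invariance then a formal consequence.

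For (1): by construction $\eca^{l'}(Z)$ is the locus in the Hilbert scheme of zero-dimensional closed subschemes $D$ of the projective scheme $Z$ for which the ideal $\cali_D\subset\calO_Z$ is invertible and $c_1(\calO_Z(D))=l'$; invertibility is an open condition, and the short exact sequence $0\to\calO_Z\to\calO_Z(D)\to\calO_D(D)\to0$ together with Riemann--Roch on $Z$ forces $\mathrm{length}(\calO_D)=(l',Z)$, so $\eca^{l'}(Z)\subset\mathrm{Hilb}^{(l',Z)}(Z)$. Since $D$ is Cartier in $Z$ its normal sheaf is the line bundle $\calO_D(D)$ on $D$, hence the Zariski tangent space of $\eca^{l'}(Z)$ at $[D]$ is $H^0(D,\calO_D(D))$ while the obstructions lie in $H^1(D,\calO_D(D))=0$ (as $D$ is zero-dimensional). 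Thus $\eca^{l'}(Z)$ is smooth at every point of local dimension $h^0(D,\calO_D(D))=\mathrm{length}(\calO_D)=(l',Z)$. Nonemptiness for $l'\in-\calS'$ follows from writing $-l'=\sum_v a_vE_v^*$ with $a_v\ge0$ and taking $D=\sum_v\sum_{i=1}^{a_v}(C_{v,i}\cap Z)$ with $C_{v,i}$ a transverse curvette through a generic smooth point of $E_v$, and irreducibility is reduced by (2) to $\eca^{l'}(E)$, which is the closure of its open stratum of reduced divisors supported at distinct smooth points of $E$.

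For (2): it is enough to show that whenever $E\le Z'<Z$ with $Z=Z'+E_{v_0}$ the restriction $\rho\colon\eca^{l'}(Z)\to\eca^{l'}(Z')$ is a Zariski-locally trivial affine bundle of rank $(l',E_{v_0})$. Here $Z'\hookrightarrow Z$ is a square-zero extension with ideal sheaf $\cali=\calO_{E_{v_0}}(-Z')$, a line bundle on $E_{v_0}\cong\bP^1$. For $[D_0]\in\eca^{l'}(Z')$ the divisor $D_0$ is lci in $Z'$ (being Cartier), so the obstruction to lifting it to $Z$ lies in $H^1(D_0,\calO_{D_0}(D_0)\otimes\cali|_{D_0})=0$ ($D_0$ zero-dimensional) and the liftings form a torsor under $H^0(D_0,\calO_{D_0}(D_0)\otimes\cali|_{D_0})$, a vector space whose dimension equals $\deg\bigl(\calO_{Z'}(D_0)|_{E_{v_0}}\bigr)=(l',E_{v_0})$. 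Globalizing over $\eca^{l'}(Z')$ with the universal divisor and cohomology-and-base-change exhibits $\eca^{l'}(Z)$ as a torsor under a rank-$(l',E_{v_0})$ vector bundle on $\eca^{l'}(Z')$, hence a Zariski-locally trivial affine bundle; composing along a chain $E=Z_0<\cdots<Z_N=Z$ gives that $r\colon\eca^{l'}(Z)\to\eca^{l'}(E)$ is such a bundle of rank $(l',Z-E)$, in particular a homotopy equivalence. Finally $\eca^{l'}(E)$ depends only on the abstract curve $E$, which for a rational homology sphere graph $\calt$ is a tree of projective lines glued according to $\calt$, rigid except for the cross-ratios of the nodes on components of valency $\ge4$; these vary in a connected family over which $\eca^{l'}(E_t)$ is topologically locally trivial, so the homotopy type of $\eca^{l'}(E)$ does not depend on them, and since $\calt$ is a topological invariant of the link, the homotopy type of $\eca^{l'}(Z)$ depends only on the topology of $(X,o)$.

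The step I expect to be the real work is the local analysis behind the affine-bundle structure in (2) at the \emph{nodes} of $E$: there the local ring is $\C\{x,y\}/(x^ay^b)$ rather than $\C\{x,y\}/(x^m)$, a Cartier divisor is cut out by an element that is a non-zero-divisor on each of the two branches, and one must check that the square-zero lifting count still yields exactly an affine fibre of dimension $(l',E_{v_0})$ and that node-supported divisors arise as flat limits of divisors at nearby smooth points --- this is what keeps the Cartier locus open in the Hilbert scheme and still dense enough for the irreducibility in (1). Openness of the Cartier locus and the precise form of Riemann--Roch on the nonreduced scheme $Z$ are routine but should be recorded.
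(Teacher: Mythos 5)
This theorem is imported from \cite{NNA1} and the present paper gives no proof of it, so the only thing to compare against is what the paper reveals about the original argument through its later use (in the proof of Proposition \ref{lem:locsubmer}): there, \cite{NNA1} is quoted as working with explicit local charts, putting a divisor supported at a point into the normal form $V(y^m+\sum_{1\le i\le N-1,\,0\le j\le m-1}a_{ij}x^iy^j)$ modulo the local equation $x^Ny^M$ of $Z$ and modulo units, using the coefficients $a_{ij}$ as coordinates and identifying $T_D\eca^{l'}(Z)$ with $\C\{x,y\}/(x^Ny^M,f)$. Your route is genuinely different — intrinsic deformation theory on the Hilbert scheme rather than normal forms — and it is correct: openness of the Cartier locus is Kleiman's theorem (reference [Kl] of this paper), your tangent space $H^0(D,\calO_D(D))$ is an invertible $\calO_D$-module and hence literally the same space $\C\{x,y\}/(x^Ny^M,f)\cong\calO_D$ of length $(l',Z)$, unobstructedness is automatic in dimension zero, and the square-zero peeling $Z=Z'+E_{v_0}$ with ideal $\calO_{E_{v_0}}(-Z')$ does produce a torsor under a rank-$(l',E_{v_0})$ vector bundle (your lifting space $H^0(D_0,\calO_{D_0}(D_0)\otimes\cali|_{D_0})$ has length $(l',E_{v_0})$ because $\cali|_{D_0}$ is supported on $D_0\cap E_{v_0}$), hence a Zariski-locally trivial affine bundle. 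What your approach buys is uniformity: smooth points, nodes and contact points are handled by one argument, and the affine structure of the fibres is intrinsic. What it loses is exactly what this paper needs downstream: the explicit coordinates $a_{ij}$ and the contact-point charts reappear verbatim in the proof of Proposition \ref{lem:locsubmer} and cannot be read off from the abstract torsor description. Two points you flag as "routine" do deserve a line each: density of the stratum of reduced divisors at distinct smooth points of $E$ (needed for irreducibility) holds because a node-supported divisor $V(x^a+y^b+\dots)$ in $\C\{x,y\}/(xy)$ deforms flatly to $a+b$ distinct smooth points via $x^a+y^b+t$; and the final topological-invariance claim needs either the explicit description of $\eca^{l'}(E)$ as built from symmetric products of the $E_v$, or an Ehresmann-type argument over the connected family of gluings, rather than the one-clause assertion you give.
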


We consider the restriction of $c$, $c^{l'}:\eca^{l'}(Z)
\to \pic^{l'}(Z)$ too, sometimes still denoted by $c$. 

For any $Z_2\geq Z_1>0$ one has the natural  commutative diagram
\begin{equation}\label{eq:diagr}
\begin{picture}(200,45)(0,0)
\put(50,37){\makebox(0,0)[l]{$
\eca^{l'}(Z_2)\,\longrightarrow \, \pic^{l'}(Z_2)$}}
\put(50,8){\makebox(0,0)[l]{$
\eca^{l'}(Z_1)\,\longrightarrow \, \pic^{l'}(Z_1)$}}
\put(70,22){\makebox(0,0){$\downarrow$}}
\put(135,22){\makebox(0,0){$\downarrow$}}
\end{picture}
\end{equation}

As usual, we say that $\calL\in \pic^{l'}(Z)$ has no fixed components if
\begin{equation}\label{eq:H_0}
H^0(Z,\calL)_{\reg}:=H^0(Z,\calL)\setminus \bigcup_v H^0(Z-E_v, \calL(-E_v))
\end{equation}
is non--empty.
Note that $H^0(Z,\calL)$ is a module over the algebra
$H^0(\calO_Z)$, hence one has a natural action of $H^0(\calO_Z^*)$ on
$H^0(Z, \calL)_{\reg}$. This second action is algebraic and free. 
 Furthermore, $\calL\in \pic^{l'}(Z)$ is in the image of $c$ if and only if
$H^0(Z,\calL)_{\reg}\not=\emptyset$, in this case, $c^{-1}(\calL)=H^0(Z,\calL)_{reg}/H^0(\calO_Z^*)$.

One verifies that $\eca^{l'}(Z)\not=\emptyset$ if and only if $-l'\in \calS'\setminus \{0\}$. Therefore, it is convenient to modify the definition of $\eca$ in the case $l'=0$: we (re)define $\eca^0(Z)=\{\emptyset\}$,
as the one--element set consisting of the `empty divisor'. We also take $c^0(\emptyset):=\calO_Z$. Then we have
\begin{equation}\label{eq:empty}
\eca^{l'}(Z)\not =\emptyset \ \ \Leftrightarrow \ \ l'\in -\calS'.
\end{equation}
If $l'\in -\calS'$  then
  $\eca^{l'}(Z)$ is a smooth variety of dimension $(l',Z)$. Moreover,
if $\calL\in \im (c^{l'}(Z))$ (the image of $c^{l'}$)
then  the fiber $c^{-1}(\calL)$
 is a smooth, irreducible quasiprojective variety of  dimension
 \begin{equation}\label{eq:dimfiber}
\dim(c^{-1}(\calL))= h^0(Z,\calL)-h^0(\calO_Z)=
 (l',Z)+h^1(Z,\calL)-h^1(\calO_Z).
 \end{equation}

\bekezdes \label{bek:I}
Consider again  a Chern class $l'\in-\calS'$ as above.
The $E^*$--support $I(l')\subset \calv$ of $l'$ is defined via the identity  $l'=\sum_{v\in I(l')}a_vE^*_v$ with all
$\{a_v\}_{v\in I}$ nonzero. Its role is the following.

Besides the Abel map $c^{l'}(Z)$ one can consider its `multiples' $\{c^{nl'}(Z)\}_{n\geq 1}$ as well. It turns out
(cf. \cite[\S 6]{NNA1})   that $n\mapsto \dim \im (c^{nl'}(Z))$
is a non-decreasing sequence, and   $\im (c^{nl'}(Z))$ is an affine subspace
for $n\gg 1$, whose dimension $e_Z(l')$ is independent of $n\gg 0$, and essentially it depends only
on $I(l')$.
We denote the linearisation of this affine subspace by $V_Z(I) \subset H^1(\calO_Z)$, or if the cycle $Z \gg 0$, then $ V_{\tX}(I) \subset H^1(\calO_{\tX})$.

Moreover, by \cite[Theorem 6.1.9]{NNA1},
\begin{equation}\label{eq:ezl}
e_Z(l')=h^1(\calO_Z)-h^1(\calO_{Z|_{\calv\setminus I(l')}}),
\end{equation}
where $Z|_{\calv\setminus I(l')}$ is the restriction of the cycle $Z$ to its $\{E_v\}_{v\in \calv\setminus I(l')}$
coordinates.

If $Z\gg 0$ (i.e. all its $E_v$--coordinated are very large), then (\ref{eq:ezl}) reads as
\begin{equation}\label{eq:ezlb}
e_Z(l')=h^1(\calO_{\tX})-h^1(\calO_{\tX(\calv\setminus I(l'))}),
\end{equation}
where $\tX(\calv\setminus I(l'))$ is a convenient small neighbourhood of $\cup_{v\in \calv\setminus I(l')}E_v$.

Let $\Omega _{\tX}(I)$ be the subspace of $H^0(\tX\setminus E, \Omega^2_{\tX})/ H^0(\tX,\Omega_{\tX}^2)$ generated by differential forms which have no poles along $E_I\setminus \cup_{v\not\in I}E_v$.
Then, cf. \cite[\S8]{NNA1},
\begin{equation}\label{eq:ezlc}
h^1(\calO_{\tX(\calv\setminus I)})=\dim \Omega_{\tX}(I).
\end{equation}

Similarly let $\Omega _{Z}(I)$ be the subspace of $H^0(\calO_{\tX}(K + Z))/ H^0(\calO_{\tX}(K))$ generated by differential forms which have no poles along $E_I\setminus \cup_{v\not\in I}E_v$.
Then, cf. \cite[\S8]{NNA1},
\begin{equation}\label{eq:ezlc}
h^1(\calO_{Z_{(\calv\setminus I)}})=\dim \Omega_{Z}(I).
\end{equation}

We have also the following duality from \cite{NNA1} supporting the equalities above:

\begin{theorem}\cite{NNA1}\label{th:DUALVO}
Via Laufer duality one has  $V_{\tX}(I)^*=\Omega_{\tX}(I)$ and $V_{Z}(I)^*=\Omega_Z(I)$.
\end{theorem}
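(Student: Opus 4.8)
The plan is to derive the statement from Serre--Grothendieck (Laufer) duality by showing that the two distinguished subspaces correspond to one another under the residue pairing. First I would record the ambient duality in the form needed here: for an effective cycle $Z\ge E$ the Grothendieck residue gives a perfect pairing
$$H^1(\calO_Z)\times\big(H^0(\calO_{\tX}(K+Z))/H^0(\calO_{\tX}(K))\big)\to\bC,$$
which identifies the space of $2$--forms modulo holomorphic ones with the dual $H^1(\calO_Z)^*$; the version on $\tX$, pairing $H^1(\calO_{\tX})$ with $H^0(\tX\setminus E,\Omega^2_{\tX})/H^0(\tX,\Omega^2_{\tX})$, is recovered by letting $Z\gg 0$. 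Since \eqref{eq:ezl} gives $\dim V_Z(I)+\dim\Omega_Z(I)=h^1(\calO_Z)$, the asserted equality $V_Z(I)^*=\Omega_Z(I)$ is to be read as the statement that, under this pairing, $\Omega_Z(I)$ is the annihilator of $V_Z(I)$, i.e. $\Omega_Z(I)\cong\big(H^1(\calO_Z)/V_Z(I)\big)^*$.

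The geometric input I would use is the description of the stabilised Abel image. For $n\gg0$ the image of $c^{nl'}(Z)$ is an affine subspace of $\pic^{l'}(Z)$ (its dimension read off from \eqref{eq:dimfiber}), and I would identify its linearisation $V_Z(I)$ with the kernel of the cohomological restriction
$$\rho\colon H^1(\calO_Z)\longrightarrow H^1(\calO_{Z|_{\calv\setminus I}}),$$
where $I=I(l')$. The point is that, because $l'$ is supported on $I$, the image of $c^{nl'}(Z)$ maps under the Picard restriction to a single point of $\pic(Z|_{\calv\setminus I})$ once $n$ is large, so it is contained in one fibre of $\rho$; hence $V_Z(I)\subseteq\ker\rho$, and equality follows from the count $\dim V_Z(I)=e_Z(l')=h^1(\calO_Z)-h^1(\calO_{Z|_{\calv\setminus I}})=\dim\ker\rho$ of \eqref{eq:ezl} (using that $\rho$ is onto). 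This is the content of \cite{NNA1}.

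It then remains to dualise $\rho$. As $\rho$ is induced by the surjection of structure sheaves $\calO_Z\to\calO_{Z|_{\calv\setminus I}}$, applying $\mathcal{H}om(-,\omega_Z)$ turns it into the adjunction inclusion $\omega_{Z|_{\calv\setminus I}}\hookrightarrow\omega_Z$ of dualising sheaves, so under Laufer duality $\rho^*$ is the induced inclusion on global $2$--forms. I would then verify, by a local computation in the residue pairing, that the image of this inclusion is exactly $\Omega_Z(I)$: a form extending from a neighbourhood of $\cup_{v\in\calv\setminus I}E_v$ is precisely one whose polar locus avoids the interior $E_I\setminus\cup_{v\notin I}E_v$. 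Granting this, $\Omega_Z(I)=\mathrm{im}(\rho^*)=(\ker\rho)^\perp=V_Z(I)^\perp$, which is the claimed duality, and the global identity $V_{\tX}(I)^*=\Omega_{\tX}(I)$ follows by passing to $Z\gg0$. The main obstacle is exactly this last local step: one must trace through the residue pairing how poles along the interior of $E_I$ are detected, and check that regularity there is equivalent to the form lying in the image of $H^0(\omega_{Z|_{\calv\setminus I}})$, i.e. to its extending across $E_I$.
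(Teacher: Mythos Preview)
The paper does not give its own proof of this statement: Theorem~\ref{th:DUALVO} is quoted from \cite{NNA1} as part of the review material in Section~3, so there is nothing in this manuscript to compare against directly. Your sketch is a faithful reconstruction of the argument in \cite{NNA1}: one identifies $V_Z(I)$ with the kernel of the restriction $\rho:H^1(\calO_Z)\to H^1(\calO_{Z|_{\calv\setminus I}})$ and then dualises via the Laufer residue pairing to obtain $\Omega_Z(I)$ as its annihilator. Your reading of the notation is also the correct one: the dimension count in \eqref{eq:ezl}--\eqref{eq:ezlc} forces $\dim V_Z(I)+\dim\Omega_Z(I)=h^1(\calO_Z)$, so the equality must be understood as $\Omega_Z(I)=V_Z(I)^\perp$ rather than as a literal identification of dual spaces.

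One small point worth tightening in your write-up: the claim that the image of $c^{nl'}(Z)$ restricts to a single point in $\pic(Z|_{\calv\setminus I})$ does not need $n$ large. Any effective divisor $D\in\eca^{nl'}(Z)$ satisfies $(D,E_v)=0$ for every $v\notin I$, so $D$ is disjoint from $\cup_{v\notin I}E_v$ (including the contact points, since a component of $D$ through $E_u\cap E_v$ with $v\notin I$ would force $(D,E_v)>0$); hence $\calO_Z(D)|_{Z|_{\calv\setminus I}}$ is trivial for every $n$. This gives $V_Z(I)\subseteq\ker\rho$ immediately, and equality then follows from \eqref{eq:ezl} together with the surjectivity of $\rho$. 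The ``local step'' you flag at the end---matching $\mathrm{im}(\rho^*)$ with forms having no pole along the interior of $E_I$---is indeed where the content lies, and in \cite{NNA1} it is handled by the explicit Laufer integration formula for the differential of the Abel map; you have correctly isolated this as the nontrivial verification.
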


\section{Results of Laufer on deformations of normal surface singularities}

In this subsection we recall some results of Laufer regarding deformations of the analytic structure on a resolution space of a normal surface singularity with fixed resolution graph
(and deformations of non--reduced analytic spaces supported on  exceptional curves) \cite{LaDef1}.

First, consider a normal surface singularity $(X,o)$ and a good resolution $\phi:(\tX,E)\to (X,o)$ with reduced exceptional curve $E=\phi^{-1}(o)$, whose irreducible decomposition is $\cup_{v\in\calv}E_v$ and dual graph $\mathcal{T}$.

Let $\cali_v$ be the ideal sheaf of $E_v\subset \tX$, then for arbitrary positive integers $\{r_v\}_{v\in \calv}$ one defines two objects, an analytic one and a topological (combinatorial) one.
 At the analytic level, one sets the ideal sheaf  $\cali(r):=\prod_v \cali_v^{r_v}$
 and the non--reduces space $\calO_{Z(r)}:=\calO_{\tX}/\cali(r)$ supported on $E$.

  The topological object is a graph with multiplicities, denoted by $\mathcal{T}(r)$. As a non--decorated graph it coincides with the resolution graph $\mathcal{T}$ without decorations. Additionally each vertex $v$ has a
  `multiplicity decoration' $r_v$, and we put also the self--intersection  decoration $E_v^2$
  whenever $r_v>1$. (Hence, the vertex $v$ does not inherit the self--intersection decoration
  of $v$ if $r_v=1$).
 Note that the  abstract 1--dimensional analytic space $Z(r)$ determines by its reduced structure
 the shape of the dual graph $\mathcal{T}$, and by its non--reduced structure
 all the multiplicities $\{r_v\}_{v\in\calv}$, and additionally,
 all the self--intersection numbers $E_v^2$ for those $v$'s when  $r_v>1$.

 We say that the space $Z(r)$ has topological type $\mathcal{T}(r)$.

Clearly, the analytic structure of $(X,o)$, hence of $\tX$ too, determines each 1--dimensional
non--reduced space $\calO_{Z(r)}$.  The converse is also true in the following sense.
\begin{theorem}\label{th:La1} \ \cite[Th. 6.20]{Lauferbook},\cite[Prop. 3.8]{LaDef1}
(a) Consider an abstract 1--dimensional space $\calO_{Z(r)}$, whose topological type
$\mathcal{T}(r)$ can be completed to a negative definite graph $\mathcal{T}$ (or, lattice $L$).
Then there exists a 2--dimensional
manifold $\tX$ in which $Z(r)$ can be embedded with support $E$
such that the intersection matrix inherited from the embedding $E\subset \tX$ is the
negative definite lattice $L$.
In particular (since by Grauert theorem \cite{GRa}
the exceptional locus $E$ in $\tX$ can be contracted to a normal singularity),
any such $Z(r)$ is always associated with a normal surface singularity (as above).

(b)  Suppose that we have two singularities $(X,o)$ and $(X',o)$ with good resolutions as above with the
same resolution graph $\mathcal{T}$. Depending solely on $\mathcal{T}$ the integers $\{r_v\}_v$ may be chosen so
large that if $\calO_{Z(r)}\simeq \calO_{Z'(r)}$, then $E\subset \tX$ and $E'\subset \tX'$ have
biholomorphically equivalent neighbourhoods via a map taking $E$ to $E'$.

\end{theorem}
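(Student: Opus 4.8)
This recollects two results of Laufer; I sketch how one proves them, the decisive input in both parts being negative definiteness of $L$.

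\textbf{Part (a).} I would first note that $\calO_{Z(r)}$ is \emph{locally planar}: near a smooth point of $E_v$ its completion is $\mathbb{C}\{x,y\}/(y^{r_v})$, and near a node $E_v\cap E_w$ it is $\mathbb{C}\{x,y\}/(x^{r_v}y^{r_w})$, so it admits local closed embeddings into a smooth surface; the task is to glue these into a global embedding realizing a prescribed negative definite lattice. So I would fix a negative definite graph $\mathcal{T}\supseteq\mathcal{T}(r)$ — this records all the self--intersections $E_v^2$ (those with $r_v=1$ are not seen by $\calO_{Z(r)}$ and are free, subject only to negative definiteness) and the genera — and then build the formal neighbourhood $\widehat{\tX}$ of $E$ one infinitesimal order at a time. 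The obstructions at each step lie in cohomology groups $H^i(E,\mathcal{F})$ of coherent sheaves on the one-dimensional $E$; those with $i\ge 2$ vanish for dimension reasons, and the only possible obstruction, in $H^1$, is of a sheaf which for large order is a twist of $\Theta_E$ (or $N_E$) by a high positive power of the conormal bundle $N_E^\vee$ — positive because $N_{E_v}^\vee=\calO_{E_v}(-E_v^2)$ has positive degree for every $v$. Hence beyond a level $r_0=r_0(\mathcal{T})$ the extension is unobstructed, while the finitely many lower-order conditions are exactly the data packaged in the \emph{given} $\calO_{Z(r)}$ (take $r$ with all coordinates $\ge r_0$), and the remaining freedom at each step — a torsor over an $H^0$ — is used to make the construction agree with $\calO_{Z(r)}$ up to order $r$. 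Laufer's Grauert-style convergence estimates (or the formal principle) then promote $\widehat{\tX}$ to an actual complex surface $\tX\supseteq Z(r)$ carrying the lattice $L$, and Grauert's contractibility criterion \cite{GRa} contracts $E$ to a normal surface singularity, giving (a).

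\textbf{Part (b).} Now $\tX,\tX'$ are given, and I would show that if $r$ has all coordinates at least the same $r_0(\mathcal{T})$, any isomorphism $\calO_{Z(r)}\simeq\calO_{Z'(r)}$ extends to a biholomorphism between neighbourhoods of $E$ and $E'$ carrying $E$ to $E'$. Again the extension is built order by order: the obstruction to promoting an isomorphism of $n$-th infinitesimal neighbourhoods to the $(n+1)$-st lies in an $H^1$ group of $\Theta_{\tX}|_E$ twisted by the $n$-th power of $N_E^\vee$, and since that bundle becomes arbitrarily positive as $n\to\infty$ its $H^1$ vanishes for $n\ge r_0(\mathcal{T})$. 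So, starting from the given isomorphism at order $r\ge r_0$, one extends freely to all higher orders, obtaining a formal isomorphism $\widehat{\tX}\simeq\widehat{\tX'}$ respecting $E\leftrightarrow E'$; the formal principle for exceptional sets — the step where strict pseudoconvexity enters — then upgrades it to a genuine biholomorphism of neighbourhoods.

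\textbf{Main obstacle.} The technical core of both parts is a \emph{uniform} $H^1$-vanishing bound $r_0=r_0(\mathcal{T})$ depending only on the graph — the surface-singularity analogue of Serre vanishing, made possible by negative definiteness of $L$ (positivity of the conormal bundles). Pinning down this bound, and in (b) passing from a formal isomorphism to an analytic one, are the delicate points, and are exactly what Laufer carries out in \cite[Th.~6.20]{Lauferbook} and \cite[Prop.~3.8]{LaDef1}.
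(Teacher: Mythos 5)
This statement is quoted in the paper purely as background: it is Laufer's result, cited as \cite[Th.\ 6.20]{Lauferbook} and \cite[Prop.\ 3.8]{LaDef1}, and the paper gives no proof of it, so there is nothing internal to compare your argument against. Judged on its own, your sketch is a faithful reconstruction of the standard (and essentially Laufer's) strategy: order-by-order extension of the infinitesimal neighbourhood, respectively of the isomorphism, with obstructions killed by $H^1$-vanishing coming from negative definiteness, followed by the formal principle for exceptional sets and Grauert's contraction criterion. The decisive inputs you name --- a uniform bound $r_0(\mathcal{T})$ depending only on the graph, and the passage from formal to convergent --- are exactly the delicate points of Laufer's proofs.

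One place where your sketch is looser than it should be: you justify positivity of the twisting bundles by $\deg N_{E_v}^\vee=-E_v^2>0$ componentwise, but in the thickening from one cycle $l$ to $l+E_v$ the relevant graded piece on $E_v$ is a sheaf twisted by $\calO_{E_v}(-l)|_{E_v}$, of degree $-(l,E_v)=-n_vE_v^2-\sum_{w\neq v}n_w(E_w,E_v)$, which is \emph{not} automatically positive for an arbitrary intermediate cycle $l>0$ since the off-diagonal intersections contribute negatively. To get the uniform vanishing one must either order the thickening along a suitable computation sequence, or argue that for $l$ sufficiently deep in the (interior of the) Lipman cone all degrees $-(l,E_v)$ are large; negative definiteness is what guarantees such cycles exist. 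This is a repairable refinement of your idea, and it is precisely the combinatorial bookkeeping that produces Laufer's bound $r_0(\mathcal{T})$, but as written the componentwise positivity claim does not by itself give the vanishing you need at every step.
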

In particular, in the deformation theory of $\tX$ it is enough to consider the deformations of
non--reduced spaces of type $\calO_{Z(r)}$.

Fix a non--reduced 1--dimensional space $Z=Z(r)$ with topological type $\mathcal{T}(r)$ and we also choose a closed subspace $Y$ of $Z$ (whose support can be smaller, it can be even empty).
More precisely, $(Z,Y)$ is locally isomorphic with $(\C\{x,y\}/(x^ay^b),\C\{x,y\}/(x^cy^d))$,
where $a\geq c\geq 0$, $b\geq d\geq 0$.

 The ideal of $Y$ in $\calO_Z$ is denoted by $\cali_Y$.

\begin{definition}\label{def:1}\ \cite[Def. 2.1]{LaDef1}
 A deformation of $Z$, fixing $Y$, consists of the following data:

(i) There  exists an analytic space $\calz$ and a proper map $\omegl:\calz\to Q$, where
$Q$ is a manifold containing a distinguished point $0$.

(ii) Over a point $q\in Q$ the fiber $Z_q$ is the subspace of $\calz$ determined by the ideal
sheaf $\omegl^* (\mathfrak{m}_q)$ (where $\mathfrak{m}_q$ is the maximal ideal of $q$). $Z$
is isomorphic with $Z_0$, usually they are  identified.

(iii) $\omegl$ is a trivial deformation of $Y$ (that is, there is a closed subspace
$\caly\subset \calz$ and the restriction of $\omegl$ to $\caly$ is a trivial deformation of $Y$).

(iv) $\omegl$ is {\it locally trivial} in a way which extends the trivial deformation $\omegl|_{\caly}$.
This means that for any point $q\in Q$ and $z\in \calz$ there exist a neighborhood $W$ of $z$ in $\calz$,
a neighborhood $V$ of $z$ in $Z_q$, a neighborhood $U$ of $q$ in $Q$, and an isomorphism
$\phi:W\to V\times U$ such that $\omegl|_W=pr_2\circ \phi$ (compatibly with the trivialization
of $\caly$ from (iii)), where $pr_2$ is the second projection.
\end{definition}
One verifies that under deformations (with connected base space) the topological type of the fibers
$Z_q$,  namely $\mathcal{T}(r)$, stays constant (see \cite[Lemma 3.1]{LaDef1}).

\begin{definition}\label{def:2}\ \cite[Def. 2.4]{LaDef1}
A deformation $\omegl:\calz\to Q$ of $Z$, fixing $Y$, is complete at $0$ if, given any deformation
$\tau:{\mathcal P}\to R$ of $Z$ fixing $Y$, there is a neighbourhood $R'$ of $0$ in $R$  and a
 holomorphic map $f:R'\to Q$ such that $\tau$ restricted to $\tau^{-1}(R')$ is the deformation
$f^*\omegl$. Furthermore, $\omegl$ is complete if it is complete at each point $q\in Q$.
\end{definition}
Laufer proved the following  results.
\begin{theorem}\label{th:La2}\ \cite[Theorems 2.1, 2.3, 3.4, 3.6]{LaDef1}
Let $\theta_{Z,Y}={\mathcal Hom}_{Z}(\Omega^1_Z,\cali_Y)$ be the sheaf of germs of vector fields on $Z$  which vanish on $Y$, and let $\omegl :\calz\to Q$ be a deformation of $Z$, fixing $Y$.

 (a) If the Kodaira--Spencer map
 $\rho_0:T_0Q\to H^1(Z,\theta_{Z,Y})$ is surjective then $\omegl$ is complete at $0$.

 (b) If $\rho_0$ is surjective than $\rho_q$ is surjective for all $q$ sufficiently near to $0$.

 (c)  There exists a deformation $\omegl$ with $\rho_0$ bijective. In such a case in a neighbourhood $U$ of $0$ the deformation is essentially unique, and  the fiber above $q$ is isomorphic to $Z$
 for only at most countably many $q$ in $U$.
\end{theorem}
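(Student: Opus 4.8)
The plan is to read the three assertions as the standard Kodaira--Spencer--Kuranishi package for the deformation theory of the pair $(Z,Y)$, where $Z=Z(r)$ is compact (being supported on $E$) and one--dimensional. The one essential structural remark is that $\dim_{\C}Z=1$, so $H^{i}(Z,\calF)=0$ for every coherent sheaf $\calF$ as soon as $i\geq 2$; consequently the functor of locally trivial deformations of $(Z,Y)$ in the sense of Definition \ref{def:1} is \emph{unobstructed}, and it is controlled by $\theta_{Z,Y}$: its infinitesimal automorphisms form $H^{0}(Z,\theta_{Z,Y})$ and its first order deformations form $H^{1}(Z,\theta_{Z,Y})$.

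\textbf{Proof of (a).} Given an arbitrary deformation $\tau\colon\calP\to R$ of $Z$ fixing $Y$, I would build a classifying map $f\colon R'\to Q$ with $\tau|_{R'}\cong f^{*}\omegl$ by successive approximation along the infinitesimal neighbourhoods $R_{n}$ of $0\in R$. At first order one uses surjectivity of $\rho_{0}$ to pick a linear lift of the Kodaira--Spencer map of $\tau$, obtaining $f_{1}\colon R_{1}\to Q$ with $f_{1}^{*}\omegl\cong\tau|_{R_{1}}$. Inductively, the extensions of $f_{n-1}$ over $R_{n}$ form a torsor; for each of them the difference between $f_{n}^{*}\omegl$ and $\tau|_{R_{n}}$ is a well defined class in $H^{1}(Z,\theta_{Z,Y})\otimes(\m_{R}^{\,n}/\m_{R}^{\,n+1})$ (it exists as a comparison class because $H^{2}=0$), and as $f_{n}$ ranges over its torsor this class changes by $\rho_{0}$ applied to the variation; surjectivity of $\rho_{0}$ lets one choose $f_{n}$ killing it, while the residual ambiguity, an element of $H^{0}(Z,\theta_{Z,Y})\otimes(\m_{R}^{\,n}/\m_{R}^{\,n+1})$, is used to match up the isomorphisms. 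This yields a formal classifying map; since $E$ is compact the groups $H^{\bullet}(Z,\theta_{Z,Y})$ are finite dimensional, so the formal solution converges to a holomorphic $f$ on a neighbourhood $R'$ of $0$ (Grauert's convergence estimates, or Artin approximation), which is exactly completeness at $0$ in the sense of Definition \ref{def:2}.

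\textbf{Proof of (b) and (c).} For (b): by Grauert's coherence and semicontinuity theorems applied to the family $\omegl$ and to the relative vector field sheaf vanishing on $\caly$, the integer $\dim_{\C}H^{1}(Z_{q},\theta_{Z_{q},Y})$ is upper semicontinuous in $q$ while $\mathrm{rank}\,\rho_{q}$ is lower semicontinuous; surjectivity at $0$ equates the two there, hence they stay equal, i.e.\ $\rho_{q}$ stays surjective, near $0$. For (c): existence of an $\omegl$ with $\rho_{0}$ bijective is Kuranishi's construction, which in the present unobstructed situation produces $\omegl\colon\calz\to Q$ with $Q$ a smooth germ, an open neighbourhood of $0$ in the finite dimensional space $H^{1}(Z,\theta_{Z,Y})$, and $\rho_{0}$ the identity; this is the semiuniversal deformation of $(Z,Y)$. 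Shrinking $Q$ using (b), $\rho_{q}$ is bijective everywhere, and then ``essential uniqueness in a neighbourhood'' is the semiuniversal property combined with the fact that a completeness map between two such germs has invertible differential, hence is a local biholomorphism. Finally, put $S=\{q\in Q:\ Z_{q}\cong Z\}$. The analytic space of pairs $(q,\psi)$ with $\psi\colon Z\to Z_{q}$ an isomorphism compatible with $Y$ surjects onto $S$, so $S$ is a countable union of irreducible analytic subsets; if one of them, $W$, had positive dimension, then $\omegl|_{W}$ would be a family over $W$ with all fibres isomorphic to $Z$, which by a Fischer--Grauert--type rigidity argument (using compactness of $E$ and constancy of $h^{1}(\theta_{Z,Y})$ along $W$) must be globally trivial over $W$ and hence have vanishing Kodaira--Spencer map along $W$, contradicting injectivity of $\rho$ on the tangent directions of $W$; therefore $S$ is $0$--dimensional, a discrete subset of $Q$, so at most countable.

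\textbf{Where the difficulty sits.} The two steps I expect to require real work are the convergence step in (a) --- upgrading the formal classifying map to an honest holomorphic map, which is where finite dimensionality of the cohomology and compactness of $E$ are indispensable --- and the rigidity input in (c), namely showing that the locus $S$ of fibres isomorphic to $Z$ is discrete rather than merely a proper analytic subset of $Q$.
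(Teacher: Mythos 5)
This statement is quoted verbatim from Laufer \cite[Theorems 2.1, 2.3, 3.4, 3.6]{LaDef1}; the paper gives no proof of it (nor is one expected --- it is imported as background for Section 5), so there is no in-paper argument to compare yours against. Judged on its own, your reconstruction follows the standard Kodaira--Spencer--Kuranishi template, which is indeed the spirit of Laufer's original proofs: he builds the complete deformation by explicit power-series (\v{C}ech-cocycle) approximation and proves convergence, exactly the two-stage ``formal solution, then convergence estimates'' scheme you describe for (a), and the key structural input --- that $\dim Z=1$ forces $H^{2}(Z,\theta_{Z,Y})=0$, so the locally trivial deformation functor of $(Z,Y)$ is unobstructed --- is correctly identified. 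One small correction in (a): the comparison class between $f_{n}^{*}\omegl$ and $\tau|_{R_{n}}$ lives in $H^{1}(Z,\theta_{Z,Y})\otimes(\m_R^{\,n}/\m_R^{\,n+1})$ because both are extensions of the \emph{same} $(n-1)$-st order deformation and the set of such extensions is an $H^{1}$-torsor; the vanishing of $H^{2}$ is not what makes this class exist (it is what makes the base of the semiuniversal deformation in (c) smooth).

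Two steps you flag as delicate are in fact the ones that are not yet proofs. First, the convergence of the formal classifying map is the analytic heart of Laufer's Theorem 2.1 and cannot be waved through with ``Grauert's estimates or Artin approximation'' without specifying which; Laufer does this by hand with majorant estimates. Second, and more seriously, the appeal to a ``Fischer--Grauert--type rigidity argument'' in (c) is not an available off-the-shelf theorem here: Fischer--Grauert concerns smooth proper families of compact complex \emph{manifolds}, whereas the fibers $Z_{q}$ are non-reduced one-dimensional spaces. Showing that a positive-dimensional subfamily with all fibers isomorphic to $Z$ must have vanishing Kodaira--Spencer classes along it is essentially the content of the countability assertion itself, so as written this part of (c) is circular in outline; it needs the isomorphism-locus argument to be carried out concretely (e.g.\ via the relative Douady/Isom space for the pair $(Z,Y)$, or Laufer's own cocycle computation). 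With those two points filled in, the proposal would be a correct proof along the classical lines.
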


\bekezdes\label{bek:funk} {\bf Functoriality} 

Let $Z'$ be a closed subspace of $Z$ such that
$\cali_{Z'}\subset \cali_Y\subset \calO_Z$. Then there is a natural reduction of pairs
$(\calO_Z,\calO_Y)\to (\calO_{Z'},\calO_Y)$. Hence, any deformation $\omegl:\calz\to Q$
of $Z$ fixing $Y$ reduces to a deformation $\omegl':\calz'\to Q$
of $Z'$ fixing $Y$. Furthermore, of $\omegl$ is complete then $\omegl'$ is automatically
complete as well (since $H^1(Z,\theta_{Z,Y})\to H^1(Z',\theta_{Z',Y})$ is onto).

\section{Relatively generic line bundles}

In this section we wish to generalize the results in \cite{NNA1} about cohomology of generic line bundles to the `relative' situation. In the next paragraphs we explain the setup.

\subsection{The relative setup.}\label{ss:relative}

We consider a cycle $Z \geq E$ on a resolution $\tX$ of a normal surface singularity with rational homology sphere resolution graph $\mathcal{T}$.
We also consider a smaller cycle $Z_1 \leq Z$, where we denote $|Z_1| = \calv_1$ and the not nessacarily connected subgraph corresponding to it by $\mathcal{T}_1$.

We have the restriction map $r:\pic(Z)\to \pic(Z_1)$ and one has also the cohomological restriction operator
  $R_1 : L'(\mathcal{T}) \to L_1':=L'(\mathcal{T}_1)$ defined as $R_1(E^*_v(\mathcal{T}))=E^*_v(\mathcal{T}_1)$ if $v\in \calv_1$, and
$R_1(E^*_v(\mathcal{T}))=0$ otherwise. For any $\calL\in \pic(Z)$ and any $l'\in L'(\mathcal{T})$ it satisfies
\begin{equation}\label{eq:CHERNREST}
c_1(r(\calL))=R_1(c_1(\calL)).
\end{equation}

Similarly assume that $\calv \setminus |Z_1| = \calv_2$ and let us denote the cohomological restriction operator by $R_2 : L'(\mathcal{T}) \to L_2':=L'(\mathcal{T}_2)$
defined as $R_2(E^*_v(\mathcal{T}))=E^*_v(\mathcal{T}_2)$ if $v\in \calv_2$, and
$R_2(E^*_v(\mathcal{T}))=0$ otherwise.

In particular,
we have the following commutative diagram as well:

\begin{equation*}  
\begin{picture}(200,40)(30,0)
\put(50,37){\makebox(0,0)[l]{$
\ \ \eca^{l'}(Z)\ \ \ \ \ \stackrel{c^{l'}(Z)}{\longrightarrow} \ \ \ \pic^{l'}(Z)$}}
\put(50,8){\makebox(0,0)[l]{$
\eca^{R_1(l')}(Z_1)\ \ \stackrel{c^{R_1(l')}(Z_1)}{\longrightarrow} \  \pic^{R_1(l')}(Z_1)$}}
\put(162,22){\makebox(0,0){$\downarrow \, $\tiny{$r$}}}
\put(78,22){\makebox(0,0){$\downarrow \, $\tiny{$\fr$}}}
\end{picture}
\end{equation*}

By the `relative case' we mean that instead of the `total' Abel map
$c^{l'}(Z)$ (with $l'\in -\calS'$ and  $Z\geq E$)
we study its restriction above a fixed fiber of $r$.
That is, we fix some line bundle  $\mfl\in \pic^{R_1(l')}(Z_1)$, and we study the restriction of the map $c^{l'}(Z)$, namely: $(r\circ c^{l'}(Z))^{-1}(\mfl)\to r^{-1}(\mfl)$.

\begin{definition} With the notations above let us denote the subvariety $(r\circ c^{l'}(Z))^{-1}(\mfl)
=(c^{R_1(l')}(Z_1) \circ \fr)^{-1}(\mfl) \subset \eca^{l'}(Z)$ by $\eca^{l', \mfl}(Z)$.
\end{definition}

We start with some properties of the $\fr$.
Before we state them let us mention that for arbitrary $l'$ the map $\fr$ is not necessarily surjective. 
In fact, it can happen that if we set $\mfl:=c^{R_1(l')}(Z_1)(D_1)$, then even $\im \fr \cap (c^{R_1(l')}(Z_1))^{-1}(\mfl)$ is empty.

However, we have  the following lemma:
\begin{proposition}\label{lem:locsubmer} (a) $\fr$ is a local submersion, that is for any
$D\in \eca ^{l'}(Z)$ and $D_1:=\fr(D)$, the tangent map $T_D\fr$ is surjective.

(b) $\fr$ is dominant.

(c) any non--empty fiber of  $\fr$ is smooth of dimension
$(l', Z)-(l',Z_1)=(l',Z_2)$, and it is irreducible.

\end{proposition}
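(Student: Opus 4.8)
The plan is to analyze the map $\fr: \eca^{l'}(Z) \to \eca^{R_1(l')}(Z_1)$ using the explicit description of effective Cartier divisors through local equations, exactly as in the absolute case treated in \cite{NNA1}. Recall that a point $D \in \eca^{l'}(Z)$ is given locally around each $E_v$ by a section $s \in H^0(Z,\calL)_{\reg}$ for the line bundle $\calL = c^{l'}(Z)(D)$, modulo the free action of $H^0(\calO_Z^*)$; equivalently, $\eca^{l'}(Z)$ is glued from the local pieces $\eca^{l'}(Z)_v$ where the divisor sits on a single $E_v$-chart. The restriction map $\fr$ corresponds simply to restricting these local defining sections from $Z$ to the subcycle $Z_1 = Z|_{\calv_1}$. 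For (a), I would compute the tangent map at $D$ using the deformation description of $\eca^{l'}(Z)$ from Theorem \ref{th:smooth}: the tangent space $T_D \eca^{l'}(Z)$ fits into the standard identification coming from the conormal sequence of the divisor, and $T_D\fr$ is induced by the surjection $\calO_Z \twoheadrightarrow \calO_{Z_1}$ tensored with the relevant normal/structure sheaves. Concretely, if $D$ is locally cut out by $(f_v)_v$, a tangent vector is a tuple of first-order perturbations $(\dot f_v)_v$ modulo those coming from units, and the map to $T_{D_1}\eca^{R_1(l')}(Z_1)$ just truncates modulo the ideal of $Z_1$; since every function on $Z_1$ lifts to a function on $Z$ (as $Z_1 \leq Z$ so $\calO_Z \to \calO_{Z_1}$ is onto), this truncation is surjective. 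This gives (a).

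For (c), once (a) is established the fiber $\fr^{-1}(D_1)$ over a point $D_1$ in the image is automatically smooth, because a local submersion has smooth fibers; its dimension is $\dim \eca^{l'}(Z) - \dim \eca^{R_1(l')}(Z_1) = (l',Z) - (R_1(l'),Z_1)$ by Theorem \ref{th:smooth}(1), and one checks $(R_1(l'),Z_1) = (l',Z_1)$ because $Z_1$ is supported on $\calv_1$ and $R_1$ only drops the $E^*_v$ for $v \notin \calv_1$, whose pairing with $E_w$, $w \in \calv_1$, is zero — hence the dimension is $(l',Z) - (l',Z_1) = (l',Z_2)$. Irreducibility of the fiber I would get by realizing it, again through the local-chart picture, as an iterated affine-space bundle: fixing the restriction of the defining sections to $Z_1$, the freedom to extend them to $Z$ is an affine space at each infinitesimal step of the filtration $Z_1 \leq Z_1 + E_{v} \leq \cdots \leq Z$, so the fiber is built up from affine bundles over an irreducible base and is therefore irreducible (and connected). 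This is the same mechanism behind the fiber-bundle statement in Theorem \ref{th:smooth}(2), just carried out relative to $Z_1$ rather than relative to $E$.

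For (b), dominance follows from (a) together with the fact that $\eca^{R_1(l')}(Z_1)$ is irreducible (it is smooth and connected by Theorem \ref{th:smooth}, being an affine-bundle tower over $\eca^{R_1(l')}(E_{\calv_1})$): a dominant-onto-its-image local submersion from a variety whose image is constructible and dense in an irreducible target is dominant. Alternatively, and more robustly, one can argue directly that a generic $D_1 \in \eca^{R_1(l')}(Z_1)$ extends to some $D \in \eca^{l'}(Z)$: pick a section of $\calL_1 := c^{R_1(l')}(Z_1)(D_1)$ with no fixed components on $Z_1$, lift the underlying natural line bundle's Chern class to $l'$ on $\tX$, and extend the section across $Z_2$ using that $l' \in -\calS'$ guarantees $\eca^{l'}(Z) \neq \emptyset$ and the restriction is dominant onto the $Z_1$-part — this is where one uses $-l' \in \calS'$ essentially.

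The main obstacle I expect is being careful about (b): unlike (a) and (c), which are formal consequences of the local-chart deformation theory, dominance of $\fr$ is a genuinely global statement and is exactly the point where the hypothesis $l' \in -\calS'$ cannot be dropped (the paragraph preceding the proposition warns that without it $\fr$ may miss entire fibers of $r$). So the delicate step is to show that the image of $\fr$, which by (a) is open in its closure, actually meets a dense subset of $\eca^{R_1(l')}(Z_1)$; I would do this by exhibiting one explicit $D \in \eca^{l'}(Z)$ whose restriction $\fr(D)$ is a \emph{generic} point of $\eca^{R_1(l')}(Z_1)$, using that for $l' \in -\calS'$ a generic section of $\calO_{\tX}(-l')|_Z$ cuts out a reduced divisor transverse to all $E_v$ and its restriction to $Z_1$ is then a generic effective Cartier divisor of the right Chern class on $Z_1$.
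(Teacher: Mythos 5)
Your parts (a) and (b) are essentially sound and close to the paper's argument. For (a) the paper does exactly the tangent--space computation you sketch: after reducing by the product structure to a divisor supported at a single point, the only non-obvious case is a contact point $p=E_v\cap E_u$ with $v\in\calv_1$, $u\in\calv_2$, where $T_D\fr$ is identified with the natural epimorphism $\C\{x,y\}/(x^Ny^M,f)\to\C\{x,y\}/(x^{N_1},f)$. For (b) the paper argues a little differently but equivalently: it exhibits the explicit dense open set $\eca^{R_1(l')}(Z_1)_{\not=cp}$ of divisors avoiding all contact points, over which the fiber is visibly non-empty; your ``the image of a local submersion is open, hence dense in the irreducible target'' route also works, granted the irreducibility of $\eca^{R_1(l')}(Z_1)$, and the paper's choice has the advantage of setting up the local charts needed for (c).

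The genuine gap is in the irreducibility claim of (c). Your ``iterated affine-space bundle over the filtration $Z_1\leq\cdots\leq Z$'' is the mechanism behind Theorem~\ref{th:smooth}(2), but that statement concerns restriction maps $\eca^{l'}(Z)\to\eca^{l'}(Z')$ which keep the \emph{same} Chern class $l'$. The map $\fr$ is not of that type: it also applies $R_1$ to the Chern class, i.e.\ it forgets the entire $\calv_2$-part of the divisor, so the fiber over $D_1$ is not an affine-bundle tower. Concretely, if $D_1$ passes through a contact point $p=E_v\cap E_u$ ($v\in\calv_1$, $u\in\calv_2$) with local equation $\widetilde f=y^m+xyg(x,y)$ mod $x^{N_1}$, the extensions of $D_1$ across $p$ are stratified by the local intersection multiplicity $o$ of the extension with $E_u$, with $N_1\leq o\leq (l',E_u)$; these strata $(\fr^{-1}(D_1))_o$ have different dimensions, and the fiber is even empty when $(l',E_u)<N_1$. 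Irreducibility therefore needs the two non-formal steps the paper carries out: (i) every stratum with $o>N_1$ lies in the closure of $(\fr^{-1}(D_1))_{N_1}$, shown by deforming $y^m+xyg+\lambda_ox^o$ to $y^m+xyg+\lambda_ox^o+\lambda_{N_1}x^{N_1}$ with $\lambda_{N_1}$ small; and (ii) the open stratum itself is irreducible, proved via an explicit affine parametrization of the local extensions at $p$ (the space $T_{D_1,N_1}$) combined with the affine fibration $h'$ over the $\calv_2$-part. Your proposal does not engage with this stratification, and without step (i) nothing rules out the fiber having several components indexed by the contact behaviour along $E_u$.
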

(Since $\fr$ is not proper, we do not expect that $\fr$ is a $C^\infty$ locally trivial fibration.)
\begin{proof}

{\it (a)}  By the product structure of the
local neighbourhoods of nonconnected divisors in $\eca^{l'}(Z)$, we can assume that $D$ is supported in only one point $p$.

\medskip

We distinguish several different cases in the following:

\medskip

\emph{Assume that $p$ is a regular point of an exceptional divisor $E_v$ such that $v \notin |Z_1|$.}

\medskip

In this case the statement is trivial, since $\fr(D)$ is the empty divisor which is the only point of $\eca^0(Z_1)$ and so the map $\fr$ is obviously a submersion at the point $D$.

\bigskip

\emph{Assume that $p$ is a regular point of an exceptional divisor $E_v$ such that $v \in |Z_1|$ or $p$ is the intersection point of two exceptional divisors $E_v, E_w$,
such that $v, w \in |Z_1|$.}

\medskip

In this case the claim follows from the fact from \cite{NNA1}, that if $A \geq B \geq E$ are effective integer cycles and $l' \in -S'$ arbitrary,
then $\eca^{l'}(A)$ is the total space of a locally trivial fibration over $\eca^{l'}(B)$ with fibres isomorphic to an affine vector space.

\bigskip

\emph{Assume that $p$ is the intersection point of two exceptional divisors $E_v, E_w$, such that $v \in |Z_1|$
and $w \notin |Z_1|$.}

\medskip

Let us call these kind of points contact points in the following, so assume that $p$ is a contact point.

Assume that $D$ has local equation $f=x^n+y^m+xyg(x,y)$, the local equation of $Z$ and $Z_1$ are $x^Ny^M$, $x^{N_1}$ respectively,
where we obviously we have $N_1 \leq N$, $D_1$ is represented by the same equation $f$, but now modulo $x^{N_1}$. Via the proof of Theorem \ref{th:smooth} from \cite{NNA1}, $T_D\eca^{l'}(Z)$ can be identified with the vector space $\C\{x,y\}/(x^Ny^M,f)$, while $T_{D_1}\eca^{R_1(l')}(Z_1)$ can be identified with  $\C\{x,y\}/(x^{N_1},f)$.
Then the natural epimorphism $\C\{x,y\}/(x^Ny^M,f)\to \C\{x,y\}/(x^{N_1},f)$ is  $T_D\fr$, which finishes the proof of part {\it (a)}.

\medskip

{\it (b)} Let $\eca^{R_1(l')}(Z_1)_{\not=cp}$ be the open subset of those divisors in $\eca^{R_1(l')}(Z_1)$ whose supports do not contain any contact points.
 This is a Zariski open set of $\eca^{R_1(l')}(Z_1)$, and for any $D_1\in \eca^{R_1(l')}(Z_1)_{\not=cp}$ the fiber
$\fr^{-1}(D_1)$ is clearly non--empty.

\medskip

{\it (c)} The first two statements follow from {\it (a)}, next we prove the irreducibility.

Again, by the product structure, we can assume that $D_1\in \eca^{R_1(l')}(Z_1)$ is supported in only one point $p$. 
If  $D_1\in \eca^{R_1(l')}(Z_1)_{\not=cp}$ then the statement is again trivial as in part a). 

\medskip

Next assume that $p$ is a contact point:

\bigskip

We will use the local charts from the proof of Theorem \ref{th:smooth} in \cite{NNA1}.

Assume that $p=E_u\cap E_v$, $v\in\calv_1$, $u\in\calv_2$, and we fix local coordinates $(x,y)$ on a neighborhood $U$ of the contact point $p$, such that $\{x=0\}=U\cap E_v$, $\{y=0\}=U\cap E_u$.
Assume that $Z$ has local equation $x^N y^M$ and $Z_1$ has got local equation $x^{N_1}$, where we have $1 \leq N_1 \leq N$. 
Assume that $D_1$ has local equation $\widetilde{f}=y^m+x\widetilde{g}(x,y)$ (mod $x^{N_1}$), in particular, $(\widetilde{f}, x)_p=m$ (which is independent of the representative  mod $x^{N_1}$),
and $-m E^*_v$ is the contribution of $E_v$ into the Chern class $l'$ (i.e. $(l',E_v)=m$).

\medskip

We distinguish two cases in the following:

\medskip

\emph{Assume that $\widetilde{g}(x,y)$ has a non--zero monomial of type $x^{o-1}$ with $o<N_1$:}

\medskip

Let $o$ be the smallest one, hence $\widetilde{f}=y^m+xyg(x,y)+\lambda_ox^o+\lambda_{o+1}x^{o+1}+\ldots$ (mod $x^{N_1}$), $\lambda_o \not=0$.
If $(l',E_u)<o$ then we get $\fr^{-1}(D_1)=\emptyset$, on the other hand if $(l',E_u)\geq o$ then set $l'_2:=R_2(l') - oE^*_u$.

Let us denote the cycle $Z' = \max(Z_1, E)$ and consider the map $\eta: \eca^{-mE_v^* - oE_u^*}(Z') \to \eca^{-mE_v^*}(Z_1)$. 

We claim that $\eta^{-1}(D_1)$ is an irreducible 
subvariety of $\eca^{-mE_v^* - oE_u^*}(Z')$ of dimension $o$, let us denote this space by $T_{D_1, o}$.
Since every divisor in $T_{D_1, o}$ has got locally at $p$ intersection multiplicity $o$ with the exceptional divisor $E_u$, we get that every divisor in $T_{D_1, o}$ is supported at the point 
$p$.

Notice that by \cite{NNA1} the divisors at the point $p$ in $\eca^{-mE_v^*}(Z_1)$, which has got intersection multiplicity $m$ with the exceptional divisor $E_v$ can be coordinated as 
$V(y^m + \sum_{1 \leq i \leq Z_v-1, 0 \leq j \leq m-1} a_{i, j} x^i y^j)$, where $V(g)$ denotes the divisor of a function $g$.
The divisors at the point $p$ in $\eca^{-mE_v^* - oE_u^*}(Z')$, which has got locally at $p$ intersection multiplicity $o$ with the exceptional divisor $E_u$ and intersection multiplicity $m$ with the exceptional divisor $E_v$ can be coordinated as $V(y^m + \sum_{1 \leq i \leq Z_v-1, 1 \leq j \leq m-1} a_{i, j} x^i y^j  + \sum_{o \leq l \leq Z_v + o -1}b_l  x^l)$.
It gives that the space $T_{D_1, o} = \eta^{-1}(D_1)$ is isomorphic to an affine space parametrised by the coordinates $b_l , Z_v \leq l \leq Z_v + o -1$.

It means that the inverse image of the divisor $D_1$ at the map $h: \eca^{l'}(Z') \to \eca^{l'}(Z_1)$ is $h^{-1}(D_1) = T_{D_1, o} \times \eca^{l'_2}(E_2)_{\not=cp}$,
so it is smooth, irreducible and of dimension $(l',Z_2)-  (l', Z - Z')$.

Consider also the map $h':  \eca^{l'}(Z) \to \eca^{l'}(Z')$, then we have $\fr = h \circ h'$ and notice that by \cite{NNA1} the map $h':  \eca^{l'}(Z) \to \eca^{l'}(Z')$ is a locally trivial fiber bundle with fiber isomorphic to an affine space of dimension $(l', Z - Z')$.
On the other hand we know that $h^{-1}(D_1)$ is smooth and irreducible, so we indeed get that $\fr^{-1}(D_1) = h'^{-1}(h^{-1}(D_1))$ is also smooth, irreducible and of the right 
dimension $(l',Z_2)$.

\medskip

\emph{Assume that such an $o<N_1$ does not exists, that is, $\widetilde{f}=y^m+xyg(x,y)$ (mod $x^{N_1}$):}

\medskip

 If $(l', E_u)<N_1$ then again $\fr^{-1}(D_1)= \emptyset$ since any extension $D$ contributes with at least $N_1$ into the intersection multiplicity $(l', E_u)$, so assume in the following that $N_1 \leq (l', E_u)$.

For any integer $N_1 \leq o\leq (l', E_u)$  let us denote the subset $(\fr^{-1}(D_1))_o \subset \fr^{-1}(D_1)$ consisting of the divisors, whose restriction at the point p interesects 
the exceptional divisor $E_u$ with multiplicity $o$.

This means that a local equation of the divisor looks like $f_o=y^m+xyg(x,y)+\lambda_ox^o$ ($\lambda_o\not=0$).
We claim that $(\fr^{-1}(D_1))_o \in \overline{(\fr^{-1}(D_1))_{N_1}}$ if $N_1 < o \leq (l', E_u)$. Indeed notice if we have a divisor $D \in (\fr^{-1}(D_1))_o$ with local equation at $p$ $f_o = y^m+xyg(x,y)+ \lambda_{o}x^{o}$, then it can be deformed locally to $y^m+xyg(x,y)+ \lambda_{o}x^{o} +  \lambda_{N_1}x^{N_1}$ and if $\lambda_{N_1}$ is enough small, we get a divisor in $(\fr^{-1}(D_1))_{N_1}$, this indeed proves that $(\fr^{-1}(D_1))_o \in \overline{(\fr^{-1}(D_1))_{N_1}}$.

It means that we have to prove that $(\fr^{-1}(D_1))_{N_1}$ is irreducible.

Consider the irreducible subspace of divisors $T_{D_1, N_1} \subset \eca^{-mE_v^* - N_1 E_u^*}(Z')$ on $Z'$ supported at the point $p$ as in the previous case and let us have the subspace $T_{N_1} = T_{D_1, N_1} \times \eca^{R_2(l') - N_1 E^*_u}(E_2)_{\not=cp} \subset
\eca^{l'}(Z')$, which is smooth and irreducible and notice that $h'^{-1}(T_{N_1}) = (\fr^{-1}(D_1))_{N_1}$.

This indeed proves that $(\fr^{-1}(D_1))_{N_1}$ is irreducible, since $h'$ is a locally trivial fiber bundle with fiber isomorphic to an affine space, this proves part c) completely.

\end{proof}

\begin{corollary}\label{cor:smoothirreddim}
Fix an arbitrary singularity with resolution $\tX$ and with rational homology sphere resolution graph, a Chern class $l'\in -\calS'$, an integer effective cycle $Z\geq E$ and a cycle $Z_1 \leq Z$. Consider a line bundle  $\mfl\in \pic^{R(l')}(Z_1)$.
Assume that  $\eca^{l', \mfl}(Z)$ is nonempty, then it is smooth of dimension $h^1(Z_1,\mfl)  - h^1(\calO_{Z_1})+ (l', Z)$ and irreducible.
\end{corollary}

\begin{proof}
Proposition
 \ref{lem:locsubmer}{\it (a)} together with the fact that $c^{R_1(l')}(Z_1)^{-1}(\mfl)$ is smooth
 shows that $\eca^{l', \mfl}(Z)$ is smooth whenever it is non--empty.
Its dimension is $\dim (c^{R_1(l')}(Z_1)^{-1}(\mfl)) + \dim \eca^{l'}(Z)-\dim \eca^{R_1(l')}(Z_1)$. Then use
(\ref{eq:dimfiber}) and Theorem \ref{th:smooth}{\it (1)}.

Since $\fr$ is dominant and open (cf. \ref{lem:locsubmer}), if $\eca^{l',\mfl}(Z)$ is non--empty,
that is ${\mathfrak{R}}:=\im \fr\cap c^{R_1(l')}(Z_1)^{-1}(\mfl)\not=\emptyset$,   then
${\mathfrak{R}}$ is a non--empty Zariski open set in $c^{R_1(l')}(Z_1)^{-1}(\mfl)$,
hence it is irreducible. 
Since all the fibers of $\fr$ over ${\mathfrak{R}}$ are irreducible (cf.  \ref{lem:locsubmer}{\it (c)}) and $\fr$ is a local submersion, we get indeed that $\eca^{l',\mfl}(Z)$ itself is irreducible. 
\end{proof}

Next we wish to prove a characterisation of the dominance of Abel maps in the relative setup:

\begin{definition}
Fix an arbitrary singularity with resolution $\tX$ and with rational homology sphere resolution graph, a Chern class $l'\in -\calS'$, an integer effective cycle $Z\geq E$, a cycle $Z_1 \leq Z$ and a line bundle $\mfl \in \pic^{R_1(l')}(Z_1)$ as above.
We say that the pair $(l',\mfl ) $ is {\it relative
dominant} on the cycle $Z$, if the closure of $ r^{-1}(\mfl)\cap \im(c^{l'})$ is $r^{-1}(\mfl)$.
\end{definition}

\begin{theorem}\label{th:dominantrel} We have the following two statements about relative dominancy:

(1) If $(l',\mfl)$ is relative dominant on the cycle $Z$, then $ \eca^{l', \mfl}(Z)$ is
nonempty and $h^1(Z,\calL)= h^1(Z_1,\mfl)$ for any
generic line bundle $\calL\in r^{-1}(\mfl)$.

(2) $(l',\mfl)$ is relative dominant on the cycle $Z$ if and only if for all
 $0<l\leq Z$, $l\in L$ one has
$$\chi(-l')- h^1(Z_1, \mfl) < \chi(-l'+l)-
 h^1((Z-l)_1, \mfl(-l)),$$ where we denote $(Z-l)_1 = \min(Z-l, Z_1)$.
\end{theorem}

\begin{proof} {\it (1)} Since $c^{l'}(Z) : \eca^{l', \mfl}(Z) \to  r^{-1}(\mfl)$ is dominant,
$\eca^{l', \mfl}(Z)$ is non--empty.

 Furthermore, for a generic line bundle $\calL$ in $r^{-1}(\mfl)$,
the dimension of the fiber of $ \eca^{l', \mfl}(Z)  \to  r^{-1}(\mfl)$ is
$\dim\eca^{l'\mfl}(Z) - \dim( r^{-1}(\mfl))$, or $(l', Z)+  h^1(Z_1,\mfl) 
 - h^1(\calO_{Z_1}) - ( h^1(\calO_Z) - h^1(\calO_{Z_1})) = (l', Z)+ h^1(Z_1,\mfl) - h^1(\calO_Z)$.

On the other hand, by (\ref{eq:dimfiber}),  this dimension is 
 $(l', Z)+ h^1(Z,\calL) - h^1(\calO_Z)$, hence we get indeed $h^1(Z,\calL)= h^1(Z_1,\mfl)$.

\medskip

{\it  (2)}\  Assume that $(l',\mfl)$ is dominant on the cycle $Z$, but
 $\chi(-l')-  h^1(Z_1, \mfl) \geq \chi(-l'+l)-
  h^1((Z-l)_1, \mfl(-l)) $ $(\dag)$ for some $0<l\leq Z$.

  Choose some generic line bundle $\calL\in r^{-1}(\mfl)\cap \im(c^{l'}(Z))$, we know that $(\dag)$ is equivalent with  $ h^1(Z_1, \mfl) + \chi(Z, \calL) \leq  h^1((Z-l)_1, \mfl(-l)) + \chi(Z-l, \calL (-l))$.

Next, since  $(l',\mfl)$ is dominant, from part  {\it (1)} we get that
$  h^0(Z, \calL)= h^1(Z_1,\mfl ) + \chi(Z, \calL)$.
 On the other hand,  $ h^1((Z-l)_1, \mfl(-l)) \leq
  h^1(Z-l, \calL(-l))$, or equivalently
   $ h^1((Z-l)_1, \mfl(-l)) + \chi(Z-l, \calL (-l)) \leq  h^0(Z-l, \calL(-l))$.

All these combined provide
 $ h^0(Z, \calL) \leq h^0(Z-l, \calL(-l))$ for some $l>0$,
 which yields $H^0(Z,\calL)_{reg}=\emptyset$.
 This contradicts the fact that $\calL\in
 \im(c^{l'}(Z))$.

For the opposite direction, assume that $\chi(-l')- h^1(Z_1, \mfl) < \chi(-l'+l)-
 h^1((Z-l)_1, \mfl(-l))$, or equivalently,
$ h^1((Z-l)_1, \mfl(-l)) + \chi(Z-l, \calL(-l)) <  h^1(Z_1, \mfl) + \chi(Z, \calL)$, for all $0<l\leq Z$.

  This, for $l=Z$, and any line bundle $\calL\in r^{-1}(\mfl)$ implies $\chi(Z,\calL)>-h^1(Z_1,\mfl)$, or,
  $h^0(Z,\calL)>h^1(Z,\calL)-h^1(Z_1,\mfl)$.

  Since $\mfl=\calL|_{Z_1}$, the epimorphism of sheaves
 $\calL\to \calL|_{Z_1}$ induces an epimorphism
 $H^1(Z,\calL)\to H^1(Z_1,\mfl)$, hence $h^0(Z,\calL)>0$.

Assume in the following that $\calL$ is a generic element of $ r^{-1}(\mfl)$.
If $H^0(Z,\calL)_{reg}=\emptyset$, then there exists $E_v$ such that
 $H^0(Z,\calL)=H^0(Z-E_v,\calL(-E_v))$. If $H^0(Z-E_v,\calL(-E_v))_{reg}=\emptyset$ again,
 then we continue the procedure. Finally we obtain
 a cycle $0<l\leq Z$ such that  $H^0(Z-l, \calL(-l))=H^0(Z,\calL)$
 and $H^0(Z-l, \calL(-l))_{reg}\not=\emptyset$, or in other words $\calL(-l)\in \im(c^{l'-l}(Z-l))$.

  Moreover, since $\calL $ is
  generic  in  $r^{-1}(\mfl)$, one gets that
  $\calL(-l)|_{Z-l} \in\pic^{l'-l}(Z-l) $ is generic
  in $r^{-1} (\mfl(-l))$ too,
  where $r$ is again the natural restriction map
  $\pic^{l'-l}(Z-l) \to\pic((Z- l)_1)$.
This shows that the pair $(l'-l,\mfl(-l))$ is relative dominant on the cycle $(Z-l)$,
and by part {\it (1)} we obtain $h^1(Z-l,\calL(-l))=h^1((Z-l)_1,\mfl(-l))$.
All these facts together imply
$ h^0(Z, \calL)=  h^0(Z-l, \calL(-l)) = h^1((Z-l)_1,
\mfl(-l)) + \chi(Z-l, \calL(-l)) <
h^1(Z_1, \mfl) + \chi(Z, \calL)$. This simplifies into $h^1(Z,\calL)<h^1(Z_1,\mfl)$,
which is false, so this contradiction finishes the proof of part 2).

\end{proof}

We are ready to prove the main theorem of the section Theorem \textbf{A}, in fact we prove it first in a more general form for cycles instead of the resolution space $\tX$:

\begin{theorem}\label{th:hegy2rel}
Fix an arbitrary singularity $\tX$ with rational homology sphere resolution graph, a Chern class $l'\in -\calS'$, an integer effective cycle $Z\geq E$, a cycle $Z_1 \leq Z$ and a line bundle $\mfl\in \pic^{R_1(l')}(Z_1)$ as in Theorem \ref{th:dominantrel}. 
Then for any line bundle $\calL\in r^{-1}(\mfl)$ one has
\begin{equation*}\label{eq:genericLrel}
\begin{array}{ll}h^1(Z,\calL)\geq \chi(-l')-
\min_{0\leq l\leq Z,\ l\in L} \{\,\chi(-l'+l) -
h^1((Z-l)_1, \mfl(-l))\, \}, \ \ \mbox{or, equivalently,}\\
h^0(Z,\calL)\geq \max_{0\leq l\leq Z,\, l\in L}
\{\,\chi(Z-l,\calL(-l))+  h^1((Z-l)_1, \mfl(-l))\,\}.\end{array}\end{equation*}
Furthermore, if $\calL$ is generic in $r^{-1}(\mfl)$
then in both inequalities we have equalities.
\end{theorem}

\begin{proof} Again, by Riemann--Roch,
it is enough to verify only the
statement for $h^0$. 

Note that for any $l$ and $\calL$  one has
$h^0(Z,\calL)\geq h^0(Z-l,\calL(-l))= \chi(Z-l,\calL(-l)) +
 h^1(Z-l,\calL(-l)) \geq \chi(Z-l,\calL(-l)) +  h^1((Z-l)_1,\mfl(-l)) $, hence the inequality follows.
We need to show the opposite inequality for $\calL$ generic in $r^{-1}(\mfl)$.
For $h^0(Z,\calL)=0$ it follows  by taking $l=Z$.

Hence, assume $h^0(Z,\calL)\not=0$. Then, as in the proof of  Theorem \ref{th:dominantrel},
there exists a cycle $0\leq l<Z$ such that $h^0(Z,\calL)=h^0(Z-l,\calL(-l))$ and
$H^0(Z-l,\calL(-l))_{reg}\not=\emptyset$. Moreover, $l'-l\in -\calS'_{|Z|}$ (cf. (\ref{eq:empty})),
and by Theorem \ref{th:dominantrel}
$h^1(Z-l,\calL(-l))= h^1((Z-l)_1, \mfl(-l))$ as well.
Hence $h^0(Z,\calL)=\chi(Z-l,\calL(-l)) + h^1((Z-l)_1, \mfl(-l))
\leq \max_{0\leq l\leq Z,\, l\in L}\{\,\chi(Z-l,\calL(-l)) +
h^1((Z-l)_1, \mfl(-l))\,\}$.
\end{proof}

\begin{remark}\label{eleg}
Notice that from the proof above we also proved the following bit stronger statement: if  $\calL$ is generic in $r^{-1}(\mfl)$ and $h^0(Z,\calL)\not=0$, then $h^0(Z,\calL) = \max_{0\leq l\leq Z,\, l\in L, H^0(Z-l, \calL-l)_{reg} \neq \emptyset}\{\,\chi(Z-l,\calL(-l))+  h^1((Z-l)_1,  \mfl(-l))\,\}$. Using the formal neighborhood theorem again one gets a similar sharpening of Theorem \textbf{A} for the resolution space $\tX$ too.
\end{remark}

\begin{proof}[Proof of Theorem \textbf{A}]
It is easy to see that in both statements of Theorem\ref{th:hegy2rel} the minimums and maximums are realised for a bounded region of the cycles $0 \leq l \in L$ independently of the cycle $Z$. On the other hand if the cycle $Z$ is very large (both of its coefficients) then from the formal neighborhood theorem
we get that $h^1(Z, \calL) = h^1(\tX, \calL)$ and $h^1((Z-l)_1, \mfl(-l)) = h^1(\tX_1, \mfl(-l))$ for all such cycles $l$, which proves Theorem \textbf{A}.
\end{proof}

\begin{remark}\label{rem:relative}

Let us look at the special case in the following, when $Z = Z_1 + Z_2$, where $|Z_1| \cap |Z_2| = \emptyset$ and let us denote $|Z_1| = \calv_1$
and $|Z_1| = \calv_2$, and the corresponding subgraphs by $\mathcal{T}_1$ and $\mathcal{T}_2$.

We have the cohomological restriction operators $R_i : L'(\mathcal{T}) \to L_i':=L'(\mathcal{T}_i)$ for $i =1,2$, defined as $R_i(E^*_v(\mathcal{T}))=E^*_v(\mathcal{T}_i)$ if $v\in \calv_i$, and $R_i(E^*_v(\mathcal{T}))=0$ otherwise.

(1) The inequalities from Theorem \ref{th:hegy2rel} provide (in principle)
better bounds for elements $\calL\in r^{-1}(\mfl)$ then the trivial bound $h^1(Z, \calL) \geq \chi(-l') - \min_{0 \leq l \leq Z} \chi(-l' + l)$ valid for arbitrary
line bundles $\calL\in\pic^{l'}(Z)$ by semicontinuity.

However, in some cases they coincide. E.g., if $\mathcal{T}_1$ is rational, then $\pic^{l'}(Z_1)=
\{\mfl\}=\{\calO_{Z_1}(l')\}$, hence  $r^{-1}(\mfl)=\pic^{l'}(Z)$. Since both
inequalities become equalities  for generic elements, one has
$$\min_{0\leq l\leq Z}\, \chi(-l'+l) = \min_{0\leq l\leq Z}\,
\{ \, \chi(-l'+l)-h^1(\calO_{(Z-l)_1}(l'-l))\,\},$$
a fact, which is not at all evident without Theorems \ref{th:dominantrel} and \ref{th:hegy2rel}.

(2) If we take in the expression in $\min$ on  the right hand side of (\ref{eq:genericLrel})
$l=0$ we obtain $h^1(Z,\calL)\geq h^1(Z_1,\mfl)$ (a fact, which follows from the
epimorphism $\calL\to \calL|_{Z_1}$ too). However, the actual bound
of (\ref{eq:genericLrel}) in general is strict larger. Indeed,
already a  better estimate
 is given by cycles $l$ with $l_1=0$. For them   we get
\begin{equation}\label{eq:legy}
h^1(Z,\calL)  \geq  \max_{0\leq l_2\leq Z_2}\{
h^1(Z_1,\mfl(-l_2))- \chi(l_2)-(l',l_2)\}.
\end{equation}
Here, usually (if $Z_2\gg 0$) both terms
$h^1(Z_1,\mfl(-l_2))$ and $\chi(l_2)+(l',l_2)$ increases (tend to $\infty$)
when $l_2$ increases, and  their difference provides the bound in (\ref{eq:legy}).

In fact, one can split this expression into a bound which shows
some basic independent $\mathcal{T}_1$-$\mathcal{T}_2$--contributions
(however, in this way  it becomes less sharp). Indeed,
since the inclusion $\mfl(-l_2)\to \mfl$
has finite quotient, $h^1(Z_1,\mfl(-l_2) \geq h^1(Z_1,\mfl)$. This shows
\begin{equation}\label{eq:legy2}
\begin{split}
h^1(Z,\calL) & \geq
h^1(Z_1,\mfl)-\min_{0\leq l_2\leq Z_2}\{ \chi(l_2)+(l',l_2)\}  \\
  &=h^1(Z_1,\mfl)+\min\{ h^1(Z_2,\calL_2)\,:\, \calL_2\in \pic^{R_2(l')}(Z_2)\}.
\end{split}
\end{equation}
E.g., if $l'=0$  then
$h^1(Z,\calL)\geq  h^1(Z_1,\mfl)-\min\chi(\mathcal{T}_2)$
(which can be much  larger than $h^1(Z_1,\mfl)$).
However, this is much
smaller than the actual bound of (\ref{eq:genericLrel}). Assume e.g.
that both $\mathcal{T}_1$ and $\mathcal{T}_2$ are rational and $l'=0$, then (\ref{eq:legy2})
provides the trivial bound, but in   (\ref{eq:genericLrel}),
$-\min\chi(\mathcal{T})$ can be arbitrary large.
This somehow suggests that the expression from the right hand side of
(\ref{eq:genericLrel}) is rather complex, and it motivates its `non--easy' form as well
(saying that we cannot take a naive splitting into $\mathcal{T}_1$ and $\mathcal{T}_2$).
\end{remark}

\section{Working definition of relatively generic normal surface singularities} 

In this section we define relatively generic analytic structures on normal surface singularities.

\medskip

\emph{Let us explain first in philosophical terms what we will do:}

\medskip

Consider a rational homology sphere resolution graph $\mathcal{T}$ with vertex set $\calv$, and we consider a partition $\calv = \calv_1 \cup  \calv_2$ of the set of vertices $\calv=\calv(\mathcal{T})$.  Let us denote the possibly nonconnected subgraph of $\mathcal{T}$ supported on the vertex set $\calv_i$ by $\mathcal{T}_i$ ($i=1,2$). For any integer cycle $Z\in L=L(\mathcal{T})$ we write $Z=Z_1+Z_2$, where $Z_i\in L(\mathcal{T}_i)$ is supported in $\mathcal{T}_i$.
Fix a normal surface singularity $\tX_1$ with resolution graph $\mathcal{T}_1$ and for each vertex $v_2 \in \calv_2$ which has got a neighbour $v_1$ in $\calv_1$ fix a cut $D_{v_2}$ on $\tX_1$, which is a smooth irreducible divisor, which intersects the exceptional divisor $E_{v_1}$ transversally in one point.
We will glue the tubular neighborhood of the exceptional divisor $E_{v_2}$ in a way, such that $E_{v_2} \cap \tX_1 = D_{v_2}$.
If we plumb the tubular neighbourhoods of the exceptional divisors $E_{v_2}, v_2 \in \calv_2$  with the above conditions "generically" to the fixed resolution $\tX_1$, then we get a singularity $\tX$ with resolution graph $\mathcal{T}$ and we say that $\tX$ is a relatively generic singularity corresponding to the analytical structure $\tX_1$ and the cuts $D_{v_2}$.

\medskip

If for some vertex $v_2 \in \calv_2$, which has a neighbour in $\calv_1$ we do not say what is the fixed cut $D_{v_2}$, then it should be understood in the way that we glue the exceptional divisor $E_{v_2}$ along a generic cut. Notice that for a vertex $v_2 \in \calv_2$ the space of the possible cuts $D_{v_2}$ is the inverse limit of the spaces 
$\eca^{-n E_{v_2}^*}(Z)$ with the natural maps $\eca^{-n E_{v_2}^*}(Z_1) \to \eca^{-n E_{v_2}^*}(Z_2), Z_1 \geq Z_2$ between them, where $Z$ runs through effective integer cycles.
We say that a cut $D_{v_2}$ is generic, if for a large cycle $Z \gg 0$ the projection of $D_{v_2}$ into $\eca^{-n E_{v_2}^*}(Z)$ lies in a Zariski open subset (suitable for the corresponding problem).

\medskip

Next, we define more precisely what we mean by a relatively generic singularity using the definitions of Laufer from section 4.
Consider the set of vertices in $\calv \setminus \calv_1$, which have a neighbour in the subgraph $\calv_1$ and let us denote it by $N(\calv_1)$.
Fix a normal surface singularity $\tX_1$ with resolution graph $\mathcal{T}_1$ and for each vertex $v_2 \in N(\calv_1)$ we fix a cut $D_{v_2}$ on $\tX_1$.
Consider a very large cycle $Y$ on $\mathcal{T}$ with a fixed analytic type, such that the analytic structure of $Y$ determines the analytic structure on the resolution,
and consider its subcycle $Y_1$. Assume also that $Y_1$ determines the analytic type $\tX_1$ on $\mathcal{T}_1$ and the analytic type of $Y$ is compatible with the cuts $D_{v_2}$.
Let us denote the effective integer cycle $Y' = Y_1 + \sum_{v \in N(\calv_1)} E_{v}$ and with the analytic type we get by glueing the reduced exceptional divisors $E_{v_2}, v_2 \in N(\calv_1)$ to $Y_1$ along the cuts $D_{v_2}$.

Assume that we have a complete deformation $\omegl:\caly \to Q$ of $Y$ fixing $Y'$ and choose a generic point $q \in Q$ in the finite dimensional parameter space, then we have the
pair  $Y_q , Y'$. The notion of the chosen generic point $q \in Q$ depends on the suitable discriminants we want to avoid in different geometric problems, in this manuscript it will be a union of loci where the $h^1$ of some special line bundles jump, similarly as in \cite{NNA2}.

\begin{definition}
The cycle $Y_q$ determines a unique analytic type $\tX_q$ where we have $E_{v_2} \cap \tX_1 = D_{v_2}$ for every $v_2 \in N(\calv_1)$. We call $\tX_q$ a relatively generic analytic structure corresponding to $\tX_1$ and the cuts $D_{v_2}$.
\end{definition}

\begin{remark}
From the universal property of complete deformations one can see that this definition does not depend on the choosen local complete deformation, just on the original fixed analytic type.
\end{remark}

\begin{remark}
If for a subset of vertices $v_2 \in J, J \subset N(\calv_1)$ we do not a fix a cut $D_{v_2}$, then we can choose $D_{v_2}$ as a generic cut described earlier.
Equivalently one can consider the cycle $Y' = Y_1 + \sum_{v \in N(\calv_1) \setminus J} E_{v}$ and choose $Y_q$ (and thus $\tX_q$) as a generic element from the parameter space
of a complete deformation of the pair $(Y, Y')$.
\end{remark}

Fix some notations in the setup described above:

\medskip

We call the intersection of an exceptional divisor from
$ \calv_1 $ with an exceptional divisor from  $ \calv_2 $ a
{\it contact point}. Furthermore as usual, parallel to the restriction
$r_i : \pic(Z)\to \pic(Z_i)$ one also has the (cohomological) restriction operator
  $R_i : L'(\mathcal{T}) \to L_i':=L'(\mathcal{T}_i)$
(defined as $R_i(E^*_v(\mathcal{T}))=E^*_v(\mathcal{T}_i)$ if $v\in \calv_i$, and
$R_i(E^*_v(\mathcal{T}))=0$ otherwise).
For any $l'\in L'(\mathcal{T})$ and any $\calL\in \pic^{l'}(Z)$ it satisfies $c_1(r_i(\calL))=R_i(c_1(\calL))$.
In the following for the sake of simplicity we will denote $r = r_1$ and $R = R_1$.

\medskip

We will use the following facts about the definition of relatively generic analytic structures in the proofs of our main theorems:

\medskip

Consider three rational homology sphere resolution graphs $\mathcal{T}_1 \subset \mathcal{T} \subset \mathcal{T}'$. Denote the set of vertices $v_2 \in \calv' \setminus \calv_1$ which have a neighbour in $\calv_1$ by $N'(\calv_1)$ and the set of vertices $v_2 \in \calv \setminus \calv_1$ which have a neighbour in $\calv_1$ by $N(\calv_1)$.
Consider a singularity $\tX_1$ with resolution graph $\mathcal{T}_1$ and cuts $D_{v_2}$ for the vertices $v_2 \in N'(\calv_1)$.

\begin{lemma}\label{subgraphrel}
Assume that $\tX'$ is a relatively generic singularity corresponding to $\mathcal{T}'$ and $\tX_1$ and the cuts $D_{v_2}, v_2 \in N'(\calv_1)$.
Then the subsingularity $\tX$ of $\tX'$ with resolution graph $\mathcal{T}$ is a relatively generic singularity corresponding to $\mathcal{T}$, $\tX_1$ and the cuts $D_{v_2}, v_2 \in N(\calv_1)$.
\end{lemma}
\begin{proof}
Consider a fixed singularity with resolution graph $\mathcal{T}'$ which is compatible with $\tX_1$ and the cuts $D_{v_2}$. Consider a large cycle on it $Y$, and its subcycle $Y' = Y_1 + \sum_{v \in N(\calv_1)} E_{v}$. Denote also the subcycle of $Y$ supported on the subgraph $\mathcal{T}$ by $Z$ and its subcycle $Z' = Z_1 + \sum_{v \in N(\calv_1) \cap \calv} E_{v}$.

Assume that we have a complete deformation $\omegl:\caly \to Q$ of $Y$ fixing $Y'$ and choose a generic point $q \in Q$ in the finite dimensional parameter space, then we have the
pair  $Y_q , Y'$. Consider also the subcycle $Z_q \leq Y_q$.
Notice that from the fact that $\omegl$ is complete we get that the Kodaira-Spencer map $T_0Q \to H^1(Y,\theta_{Y,Y'})$ is surjective. Since the map $H^1(Y,\theta_{Y,Y'}) \to H^1(Z,\theta_{Z,Z'})$ is also surjective we get that the composite map $T_0Q \to H^1(Z,\theta_{Z,Z'})$ is surjective.
It means that the deformation $\omegl$, viewed as a deformation of the pair $(Z, Z')$ is also complete. This indeed proves that $Z_q$ and the analytic type on the subgraph $\mathcal{T}$, which is determined by $Z_q$ are also relatively generic.
\end{proof}

In the following lemma we consider two rational homology sphere resolution graphs $\mathcal{T}_1 \subset \mathcal{T}$, a singularity $\tX_1$ with resolution graph $\mathcal{T}_1$ and
cuts $D_{v_2}$ for the vertices $v_2 \in \calv \setminus \calv_1$ which have a neighbour in $\calv_1$.

\begin{lemma}\label{blowup}
Assume that $\tX$ is a relatively generic singularity corresponding to $\mathcal{T}$, $\tX_1$ and the cuts $D_{v_2}$.

1) Blow up a point $p \in E_v$, where $v \in \calv_1$ and $p$ is not a contact point, and let the new singularities be $\tX_{new}, \tX_{1, new}$ with resolution graphs $\mathcal{T}_{new}, \mathcal{T}_{1, new}$. Then $\tX_{new}$ is a relatively generic singularity corresponding to $\mathcal{T}_{new} , \tX_{1, new}$ and the cuts $D_{v_2}$.

\medskip

2) Blow up a contact point $p = E_{v_1} \cap E_{n(v_1)}$ where $v_1 \in \calv_1, n(v_1) \in \calv_2$, and let the new singularities be $\tX_{new}, \tX_{1, new}$ with resolution graphs $\mathcal{T}_{new}, \mathcal{T}_{1, new}$. Then  $\tX_{new}$ is a relatively generic singularity corresponding to $\mathcal{T}_{new} , \tX_{1, new}$ and the cuts $D_{v_2}, v_2 \neq n(v_1)$ and the strict transfrom of $D_{n(v_1)}$.

\medskip
3) Consider a vertex $v \in \calv_2$ and a generic point $p \in E_v$ and blow up $\tX$ in $p$. Let the new singularity be $\tX_{new}$ with resolution graph $\mathcal{T}_{new}$, then $\tX_{new}$ is a relatively generic singularity corresponding to $\mathcal{T}_{new} , \tX_{1}$ and the cuts $D_{v_2}$.
\end{lemma}
\begin{proof}
The proof of pat 1) and part 2) is straightforward.

For part 3) consider a singularity $\tX_{new}$ with resolution graph $\mathcal{T}_{new}$ which is compatible with the subsingularity $\tX_1$ and the cuts $D_{v_2}$.
Consider also a large cycle $Y_{new}$ on it of the form $\pi^*(Y)$, where $\pi$ is the blow down map.
Consider also the subcycle of $Y_{new}$, $Y' = Y_1 + \sum_{v \in N(\calv_1)} E_{v}$, and a complete deformation $\omegl:\caly_{new} \to Q$ of $Y_{new}$ fixing $Y'$.
For a generic point $q \in Q$, the cycle $Y_{new, q}$ determines a relatively generic analytic type $\tX_{new, q}$.
By blowing down we get also a deformation $\omegl:\caly \to Q$ of the pair $(Y, Y')$ and it is also complete since the natural map $H^1(Y_{new},\theta_{Y_{new},Y'}) \to H^1(Y,\theta_{Y,Y'})$ is surjective (via dimension reason). 
This means that for a generic point $q \in Q$, the cycle $Y_{q}$ determines a relatively generic analytic type $\tX_{q}$, part 3) follows from this immediately.
\end{proof}

\section{Cohomological invariants of relatively generic singularities}

\subsection{Cohomology of natural line bundles}

In this subsection we wish to compute the geometric genus or the cohomology of natural line bundles on a relatively generic singularity.

Let us fix some notations first:

\medskip

Assume that we have two rational homology sphere resolution graphs $\mathcal{T}_1 \subset \mathcal{T} $ with vertex sets $\calv_1 \subset \calv$, where $\calv = \calv_1 \cup \calv_2$  and a fixed singularity $\tX_1$ for the subgraph $\mathcal{T}_1$, and cuts $D_{v_2}$ (for vertices $v_2$ which have a neighbour in $\calv_1$).
Consider an effective cycle $Z$ and write $Z = Z_1 + Z_2$, where $|Z_1| \subset \calv_1$ and $|Z_2| \subset \calv_2$.

We prove the following theorem with this setup:

\begin{theorem}\label{relgen}
Assume that $\tX$ has a relatively generic analytic stucture on $\mathcal{T}$ corresponding to $\tX_1$ and the cuts $D_{v_2}$.

\medskip

1) Consider a natural line bundle $\calL= \calO_{\tX}(l')$ on $\tX$, such that $ l' = - \sum_{v \in \calv} a_v E_v$, with $a_v > 0, v \in \calv_2 \cap |Z|$, and denote $c_1 (\calL | Z) = l'_ m \in L'_{|Z|}$, denote $\mfl = \calL | Z_1$.

We have $H^0(Z,\calL)_{reg} \not=\emptyset$ if and only if $(l',\mfl)$ is relative dominant on the cycle $Z$ or equivalently:

\begin{equation*}
\chi(-l')- h^1(Z_1, \mfl) < \chi(-l'+l)-  h^1((Z-l)_1, \mfl(-l)),
\end{equation*}
for all $0 < l \leq Z$.

\medskip

2)  We have the equality $ h^1(Z, \calL) =  h^1(Z, \calL_{gen})$, where $\calL_{gen}$ is a generic line bundle in $ r^{-1}(\mfl) \subset \pic^{l'_m}(Z)$, or equivalently:

\begin{equation*}
h^1(Z, \calL)= \chi(-l') - \min_{0 \leq l \leq Z}(\chi(-l'+l)-  h^1((Z-l)_1, \mfl(-l))).
\end{equation*}

\end{theorem}

\begin{proof}

We will prove statement 1) and 2) simultaneously by induction on the number $h^1(\calO_Z) - h^1(\calO_{Z_1})$. 

\medskip
Assume first that $h^1(\calO_Z) - h^1(\calO_{Z_1})= 0$.  In this case every line bundle $\calL$ on $Z$ is relatively generic with respect to $\mfl  = \calL | Z_1$ since $\dim(  r^ {-1}(\mfl  ) ) = 0$, so the theorem follows from the results on relatively generic line bundles.

\medskip

Assume in the following that $h^1(\calO_Z) - h^1(\calO_{Z_1})> 0$, this means that there is a vertex $u \in \calv_2 \cap |Z|$, such that $e_{Z}(u) > 0$.
Indeed it follows from $e_Z(\calv_2) = h^1(\calO_Z) - h^1(\calO_{Z_1})> 0$

Let us blow up the exceptional divisor $E_u$ sequentially along generic points, let the new vertices be $u_0 = u, u_1, \cdots, u_t$.
Let $t$ be the maximal integer such that $e_{Z}(u_t) > 0$, but if we blow up $E_{u_t}$ in an arbitrary point and the new exceptional divisor is $E_{u_{t+1}}$, then $e_{Z}( u_{t+1}) = 0$.
This means that all differential forms in $ \frac{H^0(\calO_{\tX}( K+Z)) }{H^0(\calO_{\tX}( K))}$ has a pole on the exceptional divisor $E_{u_t}$ of order at most $1$, but there is at least one differential form, which has got a pole on $E_{u_t}$ of order $1$.

Denote the new resoultion graph by $\mathcal{T}_t$ and the blown up resolution by $\tX_t$ and blow up map by $\pi: \tX_t \to \tX$.
Notice that since we blew up $E_u$ sequentially in generic points, the singularity $\tX_t$ is relatively generic with resolution graph $\mathcal{T}_t$ corresponding to the subsingularity
$\tX_1$ and the cuts $D_{v_2}$ by Lemma\ref{blowup}.

Denote the resolution graph supported by the vertices $\calv \setminus u \cup u_0, \cdots \cup u_{t-1} = \calv_s$ by $\mathcal{T}_s$ , and the singularity corresponding to it by $\tX_s$.
Nottice that if $t = 0$, then $t-1 = -1$ and $u_0 = u$, which means that the notation $\calv \setminus u \cup u_0, \cdots \cup u_{t-1}$ should be understood in the way $\calv \setminus u$, in this case $\mathcal{T}_s$ can be even nonconnected.

We know that $\tX_s$ is a relatively generic singularity with resolution graph $\mathcal{T}_s$ corresponding to $\tX_1$ and the cuts $D_{v_2}$.
Similarly $\tX_t$ is a relatively generic singularity with resolution graph $\mathcal{T}_t$ corresponding to $\tX_s$ and cuts described below:

If $t > 0$ then we should glue the tubular neighborhood of $E_{u_t}$ along a generic cut.
If $t = 0$ and it has got a neighbour $v_1 \in \calv_1$, then we shoud glue there along the fixed cut $D_{v_2}$ and to other components of  $\tX_s$ along generic cuts.
Denote $Z_t = \pi^*(Z)$ and let $Z_s$ be the restriction of the cycle $Z_t$ to the subsingularity $\tX_s$.
We know that $h^1(\calO_{Z_s}) - h^1(\calO_{Z_1}) <  h^1(\calO_Z) - h^1(\calO_{Z_1})$.
This means by the induction hypothesis that if $A \leq Z_s$ is any cycle on $\tX_s$ and $\calL_s$ is any natural line bundle on $\tX_t$ satisfiying the conditions of the theorem, then $h^1(A, \calL_s) = h^1(A, \calL_{gen})$, where $\calL_{gen}$ is a generic line bundle in $r^{-1}( \calL_s | A_1)$, with the notation $A_1 = \min(A, Z_1)$.

\medskip

Consider in the following a natural line bundle $\calL= \calO_{\tX}(l')$ which satisfies the conditions of our theorem.
If we write $ l' = - \sum_{v \in \calv} a_v E_v$, then $a_v > 0$ if $ v \in \calv_2 \cap |Z|$, and denote $c_1(\calL |Z ) = l'_ m= - \sum_{v \in |Z|} b_v E_v$.

\bigskip

\textbf{Proof of part 1) in case the pair $(l'_m,\mfl)$ is not relative dominant on the cycle $Z$:}

Assume first that $(l'_m,\mfl)$ is not relative dominant on the cycle $Z$, in this case we want to prove $H^0(Z,\calL)_{reg}=\emptyset$.

Consider the line bundle $\pi^*(\calL)$ on the blown up singularity $\tX_t$ with Chern class $\pi^*(l')$.

For the sake of simpleness we will denote the restriction maps $\pic^{l'_m}(Z) \to \pic^{R(l'_m)}(Z_1)$, $\pic^{\pi^*(l'_m)}(Z_t) \to \pic^{R(l'_m)}(Z_1)$ and $\pic^{R_s(\pi^*(l'_m))}(Z_s) \to \pic^{R(l'_m)}(Z_1)$ with the same notation $r$.
On the other hand we denote the cohmological restriction operator $L'_t \to L'_s$ by $R_s$ and the restriction map $\pic^{\pi^*(l'_m)}(Z_t) \to \pic^{R_s(\pi^*(l'_m))}(Z_s)$
by $r_s$.

From the fact that $(l'_m,\mfl)$ is not relative dominant on the cycle $Z$ we get that $(\pi^*(l'_m),\mfl)$ is not relative dominant on $Z_t$.
Indeed the isometry between $ \pic^{l'_m}(Z)$ and $\pic^{\pi^*(l'_m)}(Z_t)$ brings $r^{-1}(\mfl) \subset \pic^{l'_m}(Z)$ to $r^{-1}(\mfl)\subset \pic^{\pi^*(l'_m)}(Z_t)$ and  $c^{l'_m}( \eca^{l'_m, \mfl}(Z))$ to $c^{\pi^*(l'_m)}( \eca^{\pi^*(l'_m), \mfl}(Z_t))$.

We are enough to prove $H^0(Z_t, \pi^*(\calL))_{reg}=\emptyset$. Indeed if there were a section $s \in H^0(Z,\calL)_{reg}$, then we would have $H^0(Z_t, \pi^*(\calL))_{reg} \neq \emptyset$, because we blew up $E_u$ sequentially in generic points.

\medskip

Let us denote $\mfl_s = r_s(\pi^*(\calL))$, by induction if $(R_s( \pi^*( l'_m)), \mfl)$ is not relative dominant on $Z_s$, then $H^0(Z_s, \mfl_s)_{reg}=\emptyset$, from which $H^0(Z_t,\pi^*(\calL))_{reg}=\emptyset$.

\medskip

Assume in the following that $(R_s( \pi^*( l'_m)), \mfl)$ is relative dominant on the cycle $Z_s$, we prove first the folowing lemma:

\begin{lemma}
The pair  $(\pi^*(l'_m), \mfl_s)$ is not relative dominant on the cycle $Z_t$.
\end{lemma}

\begin{proof}

Assume to the contrary that $(\pi^*(l'_m), \mfl_s)$ is relative dominant on $Z_t$, then it means:

\begin{equation*}
\chi(-\pi^*(l'_m))- h^1(Z_s, \mfl_s) < \chi(-\pi^*(l'_m)+l)-  h^1((Z_t -l)_s, \mfl_s(-l)),
\end{equation*}
for all $0 < l \leq Z_t$.

Consider a generic line bundle $\calL_{gen}$ in $r^{-1}(\mfl) \in \pic^{R_s( \pi^*( l'_m))}(Z_s)$, by the induction hypothesis we know that for all $0 \leq l \leq Z_t$ we have $h^1((Z_t-l)_s, \mfl_s(-l)) = h^1((Z_t-l)_s, \calL_{gen}(-l))$, which means:

\begin{equation*}
\chi(-\pi^*(l'_m)) - h^1(Z_s, \calL_{gen}) < \chi(-\pi^*(l'_m)+l)-  h^1((Z_t-l)_s, \calL_{gen}(-l)),
\end{equation*}
for all $0 < l \leq Z_t$.
 
It means that for a generic line bundle $\calL_{gen} \in  \pic^{R_s( \pi^*( l'_m))}(Z_s)$ we know that $(\pi^*(l'_m), \calL_{gen})$ is relatively dominant on the cycle $Z_t$. However we also know that $(R_s(\pi^*(l'_m)), \mfl)$ is relative dominant on the cycle $Z_s$, so it follows that  $(\pi^*(l'_m),\mfl)$ is also relative dominant on the cycle $Z_t$, which is a contradiction.

\end{proof}

We will prove from this lemma in the following that $H^0(Z_t, \pi^*(\calL))_{reg}=\emptyset$, assume to the contrary that $H^0(Z_t, \pi^*(\calL))_{reg}\neq \emptyset$.

We will deform the analytic type on $\tX_t$ in the following.
Assume that $N$ is a large number, such that $\dim(\im(c^{-N E_{u_t}^*}(Z_t))) = e_{Z_t}(u_t)$, and blow up the exceptional divisor $E_{u_t}$ in N different generic points, denote the new exceptional divisors by $E_{v_1}, \cdots, E_{v_N}$. 
Let us denote the blown up singularity by $\tX_b$ and the pullback of the cycle $Z_t$ by $Z_b$, then we know that $e_{Z_b}(v_j) = 0$ for all $0 \leq j \leq N$, since the pole of a differential form decreases by at least $1$ at a blow up, and every differental form in $\frac{H^0(\calO_{\tX}(K + Z))}{H^0(\calO_{\tX}(K ))}$ has got a pole along the exceptional divisor $E_{v_t}$ of order at most $1$.

Notice that since we blowed up $\tX_t$ in generic points we know that $H^0(Z_b,\pi^*(\calL))_{reg}\neq \emptyset$.

Denote the subsingularity of this blown up singularity supported by the vertices $\calv \cup u_1, \cdots u_t$ by $\tX_u$. We have $h^1(\calO_{Z_u}) = h^1(\calO_{Z_b}) - \sum_{1 \leq i \leq N} e_{Z_b}(v_i) = h^1(\calO_{Z_b})$, it means in particular that $(R_u(\pi^*(l'_m)), \mfl_s)$ is not relative dominant on the cycle $Z_u$.
We get that for a generic element in $\calL_{gen} \in r_s^{-1}(\mfl_s) \subset \pic^{R_u(\pi^*(l'_m))}(Z_u)$ one has $H^0(Z_u, \calL_{gen})_{reg} = \emptyset$.

In the following fix the singularity $\tX_u$ and deform the analytic type of $\tX_b$ by moving the gluing with the tubular neighborhoods of the exceptional divisors $E_{v_1} , \cdots, E_{v_N}$ in a way that we move the intersection points $E_{v_1} \cap E_{u_t}$ such that the analytic type remains relatively generic.

We show first the following easy lemma:

\begin{lemma}\label{coveropen}
If we move the intersection points  $E_{v_1} \cap E_{u_t}$, then the possible values of the line bundles $r_u(\pi^*(\calL))$ cover an open set in $r_s^{-1}(\mfl_s) \subset \pic^{R_u(\pi^*(l'_m))}(Z_u)$.
\end{lemma}
\begin{proof}
We know that $\pi^*(l'_m)$ has got the same positive coefficents on the vertices $v_1, \cdots v_N$, say $h$, and $\dim(\im(c^{-N E_{u_t}^*}(Z_u))) = e_{Z_u}(u_t)$.

Write the line bundle $r_u(\pi^*(\calL))$ in the form:
$$r_u(\pi^*(\calL)) = \calO_{Z_u}((\pi^*(l'))_u) \otimes \calO_{Z_u}(h \cdot \sum_{1 \leq j \leq N} E_{v_j}).$$

Notice that if we deform the intersection points $E_{v_j} \cap E_{u_t}$, then the line bundle $ \calO_{Z_u}((\pi^*(l'))_u)$ stays the same and the line bundle $\calO_{Z_u}(h \cdot \sum_{1 \leq j \leq N} E_{v_j})$ covers an open set in $h \cdot \im(c^{-N E_{u_t}^*}(Z_u))$ and we know that $\dim( h \cdot \im(c^{-N E_{u_t}^*}(Z_u))) = \dim(\im(c^{-N E_{u_t}^*}(Z_t))) = e_{Z_t}(u_t)$.
It proves indeed that the line bundles $r_u(\pi^*(\calL))$ cover an open set in $r_s^{-1}(\mfl_s) \subset \pic^{R_u(\pi^*(l'_m))}(Z_u)$.

\end{proof}

From lemma\ref{coveropen} we get that after a minor deformation of the analytic type of $\tX_b$, $r_u(\pi^*(\calL))$ is a generic line bundle in $r_s^{-1}(\mfl_s) \subset \pic^{R_u(\pi^*(l'_m))}(Z_u)$,which means $H^0(Z_u, r_u(\pi^*(\calL)))_{reg} = \emptyset$, and so $H^0(Z_b,\pi^*(\calL))_{reg}=\emptyset$.
This is a contradiction since the original analytic type $\tX_b$ was already relatively generic and $H^0(Z_b,\pi^*(\calL))_{reg}\neq \emptyset$ was true before the deformation, so
it must hold also after a minor deformation of the analytic type $\tX_b$.

This contradiction proves part 1) in case the pair $(l'_m,\mfl)$ is not relative dominant on the cycle $Z$.

\bigskip

\textbf{Proof of part 1) in case the pair $(l'_m,\mfl)$ is relative dominant on the cycle $Z$:}

Assume that $(l'_m,\mfl)$ is relative dominant on the cycle $Z$, in this case we want to prove $H^0(Z,\calL)_{reg} \neq \emptyset$.

From the fact that $(l'_m,\mfl)$ is relative dominant on the cycle $Z$ it follows that for a generic line bundle $\calL_{gen} \in r^{-1}(\mfl) \subset \pic^{R_s(l'_m)}(Z_s)$, $(\pi^*(l'_m), \calL_{gen})$ is relatively dominant on the cycle $Z_t$, which means by Theorem \ref{th:dominantrel} that:

\begin{equation*}
\chi(-\pi^*(l'_m))- h^1(Z_s, \calL_{gen}) < \chi(-\pi^*(l'_m)+l)-  h^1((Z_t -l)_s, \calL_{gen}(-l)),
\end{equation*}
for all $0 < l \leq Z_t$.

We know that $h^1(\calO_{Z_s}) - h^1(\calO_{Z_1}) <h^1(\calO_{Z}) - h^1(\calO_{Z_1}) $. It means by the induction hypothesis that for all $0 \leq l \leq Z_t$ we have $h^1((Z_t-l)_s, \mfl_s(-l)) = h^1((Z_t-l)_s, \calL_{gen}(-l))$, from which we get:

\begin{equation*}
\chi(-\pi^*(l'_m))- h^1(Z_s, \mfl_s) < \chi(-\pi^*(l'_m)+l)-  h^1((Z_t-l)_s, \mfl_s(-l)),
\end{equation*}
for all $0 < l \leq Z_t$.

It means that $(\pi^*(l'_m),  \mfl_s)$ is relative dominant on $Z_t$, so for a generic line bundle $ \calL_{gen} \in r_s^{-1}(\mfl_s) \subset \pic^{\pi^*(l'_m)}(Z_t)$ we have $H^0(Z_t, \calL_{gen})_{reg} \neq \emptyset$.

\medskip

In the following denote $n = e_{Z_t}(u_t)$, and fix coordinates $w_1, \cdots, w_n$ on the linear subspace $V_{Z_t}(u_t) \subset H^1(\calO_{Z_t})$.
Let us shift the Abel map $\eca^{-E_{u_t}^*}(Z_t)  \to \pic^{-E_{u_t}^*}(Z_t)$ by a constant vector to have a target in the linear subspace $V_{Z_t}(u_t)$ instead of an affine subspace.
Since every differential form in  $ \frac{H^0(\calO_{\tX}( K+ Z)) }{H^0(\calO_{\tX}(K))}$ has got a pole along the exceptional divisor $E_{u_t}$ of order at most $1$, the shifted Abel map $\eca^{-E_{u_t}^*}(Z_t) \to V_{u_t}(Z_t)$ reduces to a map $f: \bP^1 - \delta_{u_{t}}\cdot points \to V_{u_t}(Z)$, since its value on a divisor $D' \in \eca^{-E_{u_t}^*}(Z_t)$ depends just on the intersection point $D' \cap E_{u_t}$.

Notice that  $\dim(\im(c^{-(n+1) E_{u_t}^*})(Z_t)) = e_{Z_t}(u_t) = n$.
Indeed we know that the clousure of $\im(-c^{E_{u_t}^*})(Z_t)$ is an affine algebraic curve, such that the dimension of its affine clousure is $n$, and since $\im(c^{-(n+1) E_{u_t}^*})(Z_t)$ is its $(n+1)$-fold Minkowski sum with itself our claim follows.

We know that the clousure of the image of the map $f$ is an affine algebraic curve and the linearisation of its affine hull is $V_{Z_t}(u_t)$. It means that if we choose $n+1$ generic points $p_1, \cdots, p_{n+1} \in \bP^1 - \delta_{u_{t}} \cdot points$, then $ f'(p_1), ..., f'(p_{n+1})$ generate $V_{u_t}(Z_t)$ and there is exactly one linear dependence between them $ \sum_{1 \leq i \leq n+1} a_i f'(p_i) = 0$, where $a_i \neq 0$ for all $1 \leq i \leq n+1$.

\medskip

\emph{We will again deform the analytic type of $\tX_t$ in the following in the similar was as before:}

\medskip

Blow up $E_{u_t}$ in $n+1$ generic points $p_1, \cdots, p_{n+1}$, and let the new exceptional divisors be $E_{v_1}, \cdots E_{v_{n+1}}$.
Denote the subsingularity of this blown up singularity supported by the vertices $\calv \cup u_1, \cdots u_t$ by $\tX_u$. We have again $p_g(Z_u) = p_g(Z_b)$, which means in particular that $(R_u(\pi^*(l'_m)), \mfl_s)$ is relative dominant on the cycle $Z_u$.

In the following fix the singularity $\tX_u$ and deform the analytic type of $\tX_b$ by changing the glueing with the tubular neighbourhoods of the exceptional divisors $E_{v_1}, \cdots, E_{v_{n+1}}$ and moving the intersection points $E_{v_1} \cap E_{u_t}, \cdots, E_{v_{n+1}} \cap E_{u_t}$ such that the analytic type of $\tX_b$ remains relatively generic.

We know that $\pi^*(l'_m)$ has got positive coefficents on the vertices $v_1, \cdots v_{n+1}$, and $$\dim(\im(c^{-(n+1) E_{u_t}^*}(Z_t))) = e_{Z_t}(u_t).$$
In exactly the same way as in lemma \ref{coveropen} we get that if we move the intersection points $E_{v_j} \cap E_{u_t}$, then the line bundles $r_u(\pi^*(\calL))$ cover an open set in $ r_s^{-1}(\mfl_s) \subset \pic^{R_u(\pi^*(l'_m))}(\tX_u)$.

It means that for a generic choice of the interection points $p_1', \cdots p_{n+1}'$, the line bundle $r_u(\pi^*(\calL))$ is a generic line bundle in $ r_s^{-1}(\mfl_s) \subset \pic^{\pi^*(l'_m)}(\tX_u)$, which means that $H^0(Z_u, r_u(\pi^*(\calL)))_{reg} \neq \emptyset$.
Consider a section $s \in H^0(Z_u, r_u(\pi^*(\calL)))_{reg}$, and its divisor $D$ on $Z_u$.
We know that $p_1', \cdots, p'_{n+1}$ are generic points of the divisor $E_{u_t}$, which means that we have a linear dependence $$ \sum_{1 \leq i \leq n+1} a_i f'(p_i) = 0,$$ where $a_i \neq 0$ for all $1 \leq i \leq n+1$.

\medskip

Notice that the map $c^{-(n+1) E_{u_t}^*}(Z_t) : \eca^{-(n+1) E_{u_t}^*}(Z_t) \to \pic^{-(n+1) E_{u_t}^*}(Z_t)$ can be identified with the map $c^{-(n+1) E_{u_t}^*}(Z_u) : \eca^{-(n+1) E_{u_t}^*}(Z_u) \to \pic^{-(n+1) E_{u_t}^*}(Z_u)$.
The map $c^{-(n+1) E_{u_t}^*}(Z_u) : \eca^{-(n+1) E_{u_t}^*}(Z_u) \to \pic^{-(n+1) E_{u_t}^*}(Z_u)$ is a submersion in $(p_1', \cdots p_{n+1}')$. This means by the implicit function theorem that we can move the intersection points, such that their derivatives don't vanish, but the line bundle $ r_u(\pi^*(\calL))$ stays the same.

It means that for a generic choice of $ (p_1', \cdots p_{n+1}')$ one has a section in $s \in H^0(Z_u, r_u(\pi^*(\calL)))_{reg}$, whose divisor is $D$ and it is disjoint from the contact points $(p_1', \cdots, p_{n+1}')$.

Since $h^1(\calO_{Z_b}) = h^1(\calO_{Z_u})$ we can find a section $s' \in H^0(Z_b, \pi^*(\calL))_{reg} \neq \emptyset$ with the same divisor $D$.
It means that we have $H^0(Z_b, \pi^*(\calL))_{reg} \neq \emptyset$ and by blowing down we get $H^0(Z_t, \pi^*(\calL))_{reg} \neq \emptyset$. This finishes the proof of part 1) completely.

\bigskip

\textbf{Proof of part 2):}

Consider again the blown up singularity with vertex set $\calv \cup u_0, \cdots, u_t$ and its subsingularity $\tX_s$ with resolution graph supported by the vertex set $\calv \setminus u_t \cup  u_0, \cdots, u_{t-1}$.
We know that $h^1(\calO_{Z_s}) - h^1(\calO_{Z_1}) <  h^1(\calO_Z) - h^1(\calO_{Z_1})$. It means by the induction hypothesis that if $A \leq Z_s$ is any cycle on $\tX_s$ and $\calL_s$ is any natural line bundle on $\tX_t$ satisfiying the conditions of the Theorem, then $h^1(A, \calL_s) = h^1(A, \calL_{gen})$, where $\calL_{gen}$ is a generic line bundle in $r^{-1}( \calL_s | A_1)$ with $A_1 = \min(A, Z_1)$.

\medskip

We prove first the following lemma:

\begin{lemma}\label{segedhegy}
Suppose that $Z' \leq Z_t$ is a cycle on $\tX_t$ and $\calL = \calO_{\tX_t}(l'_t)$ is a natural line bundle on $\tX_t$, such that $l'_t = \sum_{v \in \calv_t} a_v E_v$ with $a_{u_t} < 0$.
With these assumptions we have $h^1(Z', \calL) = h^1(Z', \calL_{gen})$, where $\calL_{gen}$ is a generic line bundle in $r_s^{-1}( \calL | Z'_s )$.
\end{lemma}
\begin{proof}
We prove the lemma by induction on $\sum_{v \in \calv_t} Z'_v $, if this is $0$, then the satement is trivial.
Let us denote $\calL | Z'_s =  \mfl_s$, there are two cases in the following:

\medskip

\underline{Suppose first that $(l'_t,  \mfl_s)$ is not relative dominant on $Z'$:}

\medskip

We know that $h^1(\calO_{Z'}) - h^1(\calO_{Z_s}) \leq  h^1(\calO_Z) - h^1(\calO_{Z_1})$ so we can use the already proven part 1) to get $H^0(Z' , \calL)_{reg} = \emptyset$.
On the other hand we have also $H^0(Z' , \calL_{gen})_{reg} = \emptyset$ which means that:

\begin{equation*}
h^0(Z' , \calL) = \max_{v \in |Z'|}(Z' -E_v, \calL - E_v)  = \max_{v \in |Z'|}(Z' -E_v, \calL_{gen} - E_v) = h^0(Z' , \calL_{gen}) ,
\end{equation*}
by the induction hypothesis, so we are done in this case.

\medskip

\underline{Suppose in the following that  $(l'_t,  \mfl_s)$ is relative dominant on $Z'$:}

\medskip

It means by part 1), that $H^0(Z' , \calL)_{reg} \neq \emptyset$ and $H^0(Z' , \calL_{gen})_{reg} \neq \emptyset$.

\medskip

We will use again our usual deformation technique.
Assume that $N$ is a large number, such that $\dim(\im(c^{-N E_{u_t}^*})(Z')) = e_{Z'}( u_t)$ and blow up $E_{u_t}$ in $N$ generic points, and let the new singularity be $\tX_b$, and let the new vertices be $v_1, \cdots v_N$.
Let us denote the intersection points $p_j = E_{v_j} \cap E_{u_t}$ and the subsingularity supported on the vertex set $\calv \cup u_1, \cdots, u_t$ by $\tX_u$.
Furthermore denote the pullback of the cycle $Z'$ by $Z'_b$ and its restriction to $\tX_u$ by $Z'_u$.
Since we have blown up $E_{u_t}$ in generic points we know that $H^0(Z'_b, \pi^*(\calL))_{reg} \neq \emptyset$.

In the following fix the singularity $\tX_u$ and deform the analytic type of $\tX_b$ by moving the intersection points $E_{v_1} \cap E_{u_t}, \cdots, E_{v_{N}} \cap E_{u_t}$ such that the analytic type of $\tX_b$ remains relatively generic.
We know that $\pi^*(l'_t)$ has got positive coefficents on the vertices $v_1, \cdots v_{N}$, and $\dim(\im(c^{-N E_{u_t}^*}(Z_t))) = e_{Z_t}(u_t)$. We get exactly the same way
as in lemma\ref{coveropen} that if we move the intersection points $E_{v_j} \cap E_{u_t}$, then the line bundles $r_u(\pi^*(\calL))$ cover an open set in $ r_s^{-1}(\mfl_s) \subset \pic^{R_u(\pi^*(l'_t))}(Z'_u)$.

It means that for a generic choice of the contact points one has:

\begin{equation*}
h^1(Z'_u,  \pi^*(\calL)| Z'_u) = h^1(Z'_u, \pi^*(\calL_{gen})| Z'_u)
\end{equation*}

We have $h^1(\calO_{Z'_u}) = h^1(\calO_{Z'_b})$ and $H^0(Z'_b , \pi^*(\calL))_{reg} \neq \emptyset$, which means that $h^1(Z'_b ,  \pi^*(\calL)) = h^1(Z'_u,  \pi^*(\calL)| Z'_u) $.

Similarly we have $h^1(Z'_b , \pi^{*}(\calL_{gen})) = h^1(Z'_u, \pi^{*}(\calL_{gen})| Z'_u) $, which means that $$h^1(Z'_b , \pi^*(\calL)) =h^1(Z'_b , \calL_{gen}).$$
It means that we also have $h^1(Z', \calL) = h^1(Z' , \calL_{gen})$ and the lemma is proved.

\end{proof}

We return to the proof of part 2) in the following:

\medskip

We have to prove that $h^1(Z, \calL) = h^1(Z, \calL'_{gen})$, where $\calL'_{gen}$ is a generic line bundle in $r^{-1}(\mfl) \subset \pic^{l'_m}(Z)$.
Notice that we have proved in Lemma \ref{segedhegy} that $h^1(Z_t, \pi^*(\calL)) = h^1(Z_t, \calL_{gen})$, where $\calL_{gen}$ is a generic line bundle in $r_s^{-1}(  \pi^*(\calL) | Z_s )$.
Denote $ \mfl_s = \pi^*(\calL) | Z_s $, notice that by theorem \ref{th:hegy2rel} we have:

\begin{equation*}
h^1(Z_t, \calL_{gen}) = \chi(\pi^*(-l'_m)) - \min_{0 \leq l \leq Z_t} (- h^1((Z_t - l)_s,   \mfl_s(-l) ) + \chi(-\pi^*(l'_m) + l) ).
\end{equation*}

Consider a generic line bundle $ \mfl_{gen, s} \in r^{-1}(\calL | Z'_1  ) \subset \pic^{R_s(\pi^*(l'_m))}(Z_s)$, we can choose a generic line bundle $\calL''_{gen} \in \pic^{\pi^*(l'_m)}(Z_t)$ as a generic line bundle in $r_s^{-1}(\mfl_{gen, s}) \subset \pic^{\pi^*(l'_m)}(Z_t)$.

Notice that by theorem \ref{th:hegy2rel} we have:

\begin{equation*}
h^1(Z_t, \calL''_{gen}) = \chi(\pi^*(-l'_m)) - \min_{0 \leq l \leq Z_t} (- h^1((Z_t - l)_s,   \mfl_{gen, s}(-l) ) + \chi(-\pi^*(l'_m) + l) ).
\end{equation*}

If we apply the induction hypothesis on $Z_s$, we get that for all cycles $0 \leq l \leq Z_t$ one has $h^1((Z_t - l)_s,   \mfl_{gen, s}(-l) ) = h^1((Z_t - l)_s,  \mfl_s(-l) )$,
which indeed means $h^1(Z_t, \calL_{gen}) = h^1(Z_t, \calL''_{gen})$.

It means that $h^1(Z, \calL) = h^1(Z_t, \calL''_{gen})$ and by blowing down the right hand side we get that $ h^1(Z, \calL) =  h^1(Z, \calL'_{gen})$, where $\calL'_{gen}$ is a generic line bundle in $ r^{-1}(\mfl) \subset \pic^{l'_m}(Z)$, and the proof of part 2) is finished.

\end{proof}

\begin{remark}
We could also prove the previous theorem by using a similar technique as in the proof of the following Theorem\ref{moreg}.
However we preferred the previous proof, as it could be done in a purely combinatorial way and is a prototype of proofs for other problems regarding general surface singularities.
For example we determine the base point structure of natural line bundles in section 7 with the same method.
\end{remark}

\begin{proof}[Proof of Theorem \textbf{B}]

Consider an effective cycle $Z \geq E$ on $\tX$ and write $Z = Z_1 + Z_2$, where $|Z_1| \subset \calv_1$ and $|Z_2| \subset \calv_2$, then we claim the following formula for the cohomology of the cycle $Z$ first:

\begin{equation}\label{Zcohom}
h^1(\calO_Z) = 1 - \min_{E \leq l \leq Z}(\chi(l)-   h^1(\calO_{(Z-l)_1}(-l) )).                                 
\end{equation}

Consider the following exact sequence:

\begin{equation*}
0 \to H^0(\calO_{Z-E}(-E)) \to H^0(\calO_{Z}) \to H^0(\calO_{E}) \to H^1(\calO_{Z-E}(-E)) \to H^0(\calO_{Z}).
\end{equation*}

\medskip

Since the map $H^0(\calO_{Z}) \to H^0(\calO_{E})$ is clearly surjective, we have $h^1(\calO_Z) =  h^1(\calO_{Z-E}(-E))$.  On the other hand part 2) of our main Theorem\ref{relgen} can be applied for the line $\calO_{Z-E}(-E)$, since it certainly satisfies the assumptions of the theorem.

We get from it equation\ref{Zcohom} immediately.

Notice that from the formal neighborhood theorem we know that if $Z \gg 0$ is a very large cycle, then $p_g(\tX) = h^1(\calO_Z)$, from which we indeed get:

\begin{equation*}
p_g(\tX) = 1 - \min_{E \leq l}(\chi(l)-   h^1(\calO_{\tX_1}(-l) )).                                 
\end{equation*}

This proves indeed Theorem \textbf{B} completely.

\end{proof}

\bigskip

\subsection{Analytic Poincaré series and analytic semigroup}

In this subsection we have the setup as before, so consider two rational homology sphere resolution graphs $\mathcal{T}_1 \subset \mathcal{T} $, a fixed singularity $\tX_1$ for the subgraph $\mathcal{T}_1$, and cuts $D_{v_2}$.
Assume furthermore that $\tX$ has a relatively generic analytic stucture on $\mathcal{T}$ corresponding to $\tX_1$ and the cuts $D_{v_2}$.

In the following corollary we determine the analytic Poincaré series.
Notice that by \cite{NPS} it is equivalent to determine the cohomology numbers of all natural line bundles on the singularity $\tX$ and they can be described more naturally in the current situation (for more about the analytic Poincaré series see \cite{CDGPs}, \cite{CHR}).

\begin{corollary}

a) If $l' \in L'$ is a Chern class such that $l' \notin L$, then we have $h^1(\calO_{\tX}(l')) = \chi(-l')  - \min_{0 \leq l}(\chi(-l' + l)-   h^1(\calO_{\tX_1}(l'-l) ))$.

\medskip

b)  If $l' \in L$ is a Chern class such that $l' \ngeq 0$ then $h^1(\calO_{\tX}(l')) = \chi(-l')  - \min_{0 \leq l}(\chi(-l' + l)-   h^1(\calO_{\tX_1}(l'-l) ))$.

\medskip

c)  If $l' \in L$ is a Chern class such that $l' \geq 0$ then $h^1(\calO_{\tX}(l')) = \chi(-l')  - \min_{0 \leq l}(\chi(-l' + l)-   h^1(\calO_{\tX_1}(l'-l) )) + D$, where $D = 0$ if the pair $(0, \tX_1)$ is relative dominant on $\tX$ (which is equivalent to $p_g(\tX) = p_g(\tX_1)$) and $D= 1$ otherwise.
\end{corollary}

\begin{proof}

For part a) assume first that $l' \notin L$.
Let $s(-l') \in S'$ be the smallest element in the Lipman cone such that  $0 \leq s(-l') + l' \in L$, then we have $h^1(\calO_{\tX}(l')) = \chi(-l') - \chi( s(-l')) + h^1(\calO_{\tX}(s(-l')))$.
On the other hand the line bundle $\calO_{\tX}(s(-l'))$ satisfies the conditions of our main Theorem\ref{relgen} because $s(-l') \neq 0$ and it has got positive coefficient, so we get 
$h^1(\calO_{\tX}(s(-l')))  = \chi(s(-l'))  - \min_{0 \leq l}(\chi(s(-l') + l)-   h^1(\calO_{\tX_1}(s(-l')-l) ))$.
Furthermore it can easily be seen that $\min_{0 \leq l}(\chi(-l' + l)-   h^1(\calO_{\tX_1}(l'-l) )) = \min_{0 \leq l}(\chi(s(-l') + l)- h^1(\calO_{\tX_1}(s(-l')-l) ))$ since
a pair $(l'-l, \calO_{\tX_1}(l'-l) ))$ can only be relative dominant on $\tX$ if $l' - l \in - S'$, so it proves the statement in the case $l' \notin L$.

\medskip

The proof of part b) is exactly the same.

\medskip

For part c) notice that if $l' \in L$ and $l' \geq 0$ then we have $h^1(\calO_{\tX}(l')) = \chi(-l') - \chi(0) + p_g(\tX)$.
If the pair $(0, \tX_1)$ is relative domiant on $\tX$ then $p_g(\tX) = - \min_{0 \leq l}(\chi(l)-   h^1(\calO_{\tX_1}(-l) ))$ and if the pair $(0, \tX_1)$ is not relative domiant on $\tX$ then 
$p_g(\tX) = 1 - \min_{0 \leq l}(\chi(l)-   h^1(\calO_{\tX_1}(-l) ))$, the statement follows from this immediately.
\end{proof}

In the following corollary we determine the analytic semigroup of the relatively generic singularity $\tX$:

\begin{corollary}
Assume that we have a Chern class $l' \in L'$, then we have $l' \in S'_{an}$ if and only if $l' = 0$, or the pair $(l', \calO_{\tX_1}(l'))$ is relative dominant on $\tX$, or equivalently:

\begin{equation*}
\chi(-l') - h^1(\calO_{\tX_1}(l')) < \chi(-l' + l) - h^1(\calO_{\tX_1}(l'-l)),
\end{equation*}
where $0 < l \in L$.
\end{corollary}

\begin{proof}
Notice first that $0$ is certainly in $S'_{an}$ and if $l' \notin S'$ then $l' \notin S'_{an}$ and also the pair $(-l', \calO_{\tX_1}(-l'))$ cannot be relative dominant on $\tX$.

On the other hand if $0 \neq l' \in S'$, then all the coeficcients of $l'$ are positive so by our main Theorem \ref{relgen} we know that $l' \in S'_{an}$ if and only if $H^0(\calO_{\tX_1}(-l'))_{reg} \neq \emptyset$ if and only if the pair $(l', \calO_{\tX_1}(l'))$ is relative dominant on $\tX$. This proves the corollary completely.
\end{proof}
\begin{remark}
Note that in particular $Z_{max}$ is the minimal element in $L_{>0}$ such that 
\begin{equation*}
\chi(Z_{max}) - h^1(\calO_{\tX_1}(-Z_{max})) < \chi(Z_{max} + l) - h^1(\calO_{\tX_1}(-Z_{max}-l)),
\end{equation*}
for every $0 < l \in L$.
\end{remark}

\subsection{Sharpening of the main theorem}

The following theorem is in some way sharper then Theorem\ref{relgen}, and we will need it for technical reasons in following manusripts.

\medskip

The setup will be similar as above:

\medskip

Consider two rational homology sphere resolution graphs $\mathcal{T}_1 \subset \mathcal{T} $, a fixed singularity $\tX_1$ for the subgraph $\mathcal{T}_1$, and cuts $D_{v_2}$.
Assume that we have an effective cycle $Z$ and write $Z = Z_1 + Z_2$, where $|Z_1| \subset \calv_1$ and $|Z_2| \subset \calv_2$.

With this setup we have the following theorem:

\begin{theorem}\label{moreg}

Assume that $\tX$ has a relatively generic analytic stucture on $\mathcal{T}$ corresponding to $\tX_1$ and the cuts $D_{v_2}$.
Consider a Chern class $ l' = - \sum_{v \in \calv} a_v E_v$  and assume that  $a_v \neq 0$ if $v \in \calv_2 \cap |Z|$.
For the natural line bundle $\calL= \calO_{\tX}(l')$ denote $c_1 (\calL | Z) = l'_ m \in  L'_{|Z|}$ and $\mfl = \calL | Z_1$.

\medskip

Assume that $H^0(Z,\calL)_{reg} \not=\emptyset$, and pick an arbitrary divisor $D \in c^{l'_m}(Z)^{-1}(\calL) \subset \eca^{l'_m, \mfl}(Z)$.

\medskip

1) Under these conditions the map $ c^{l'_m}(Z) : \eca^{l'_m, \mfl}(Z) \to r^ {-1}(\mfl  )$ is a submersion in $D$, and $h^1(Z,\calL) = h^1(Z_1, \mfl)$.

2) In particular the map $ c^{l'_m}(Z) :  \eca^{l'_m, \mfl}(Z) \to r^ {-1}(\mfl  )$ is dominant, which means $(l'_m,\mfl)$ is relative dominant on the cycle $Z$, or equivalently:

\begin{equation*}
\chi(-l')- h^1(Z_1, \mfl) < \chi(-l'+l)-  h^1((Z-l)_1, \mfl(-l)),
\end{equation*}
for all $0 < l \leq Z$.

\end{theorem}

\begin{remark}
The proof will follow the proof of the analouge Theorem in the nonrelative setup from \cite{NNA2} with basically just technical modifications.
\end{remark}

\begin{proof}

\textbf{For part 1)} assume to the contrary that $D \in c^{l'_m}(Z)^{-1}\calL \subset \eca^{l'_m, \mfl}(Z)$, but $ c^{l'_m}(Z) : \eca^{l'_m, \mfl}(Z) \to r^ {-1}(\mfl  )$ is not a submersion in $D$. It means that there is an element $w \in  (r^{-1}( \mfl))^{*}$ , such that $d(w \circ c^{l'_m}(Z)) $ vanishes in $D \in  \eca^{l'_m, \mfl}(Z)$, where $(r^{-1}( \mfl))^{*}$ means the dual of the linearisation of the affine space  $r^{-1}(\mfl)$.  An element $w \in  (r^{-1}( \mfl))^{*}$ gives an affine function $w : r^{-1}(\mfl) \to \bC$ uniqely up to an additive constant.
The map $w : r^{-1}(\mfl) \to \bC$ defines also maps $V_I(Z) \to \bC$ for all $I \subset \calv_2$, we denote all of them by $w$ for the sake of simpleness of the notations.

Notice that $H^1(\calO_Z)^*$ can be identified with the differential forms $\frac{H^0(\calO_{\tX}( K + Z)) }{H^0(\calO_{\tX}(K))} \subset H^1(\calO_{\tX})^*$ having pole on a vertex $E_v$ of order at most $Z_v$ for every $v \in \calv$.
Denote by $\Omega_{Z, \calv_2} \subset \frac{H^0(\calO_{\tX}( K + Z)) }{H^0(\calO_{\tX}(K))}$ the differential forms, which have got no pole on the exceptional divisors corresponding to the vertices $v_2 \in \calv_2$ and let $W \subset \frac{H^0(\calO_{\tX}( K + Z)) }{H^0(\calO_{\tX}(K))}$ be a complementary subspace of $\Omega_{Z, \calv_2}$.
The subspace of differential forms $W$ can be identified with the space $ (r^{-1}( \mfl))^{*}$, and we can see $w$ as a differential form $w \in W$.

Let us denote by $A$ the affine clousure operator in the following.
We know that there is a vertex $v \in \calv_2$, such that $w$ has got a pole on the exceptional divisor $E_v$. It means by \cite{NNA1} that the map $  \eca^{-E_v^*}(Z) \to A( \im(  c^{-E_v^*}(Z))) \to \bC$ is non constant, where the second map is $A( \im(  c^{-E_v^*}(Z))) \to V_Z(v) \to \bC$, which is an arbitrary translation $A( \im(  c^{-E_v^*}(Z))) \to V_Z(v)$ composed by the linear form $w$.

We blow up the exceptional divisor $E_v$ and sequentially the new exceptional divisors in generic points. 
Let the new exceptional divisors be $ E_{v_0} = E_v, E_{v_1},...E_{v_t}$ and denote by $t$ the minimal number such that $w$ hasn't got a pole along the exceptional divisor $E_{v_t}$, it means in particular that $w$ has got a pole on $E_{v_{t-1}}$ of order $1$ (since we are blowing up the exceptional divisors in generic points, the order of the pole of $w$ decreases by $1$ at each step). We know that $t$ must be finite, in fact $t \leq Z_v$, because the order of the pole of $w$ decreases by $1$ at each blow up. We also have obviously $t \geq 1$.

Denote the blown up singularities by $\tX_{b, i}$ and the vertex sets and resolution graphs of $\tX_{b, i}$ by $\calv_{b,i}, \mathcal{T}_i$.
Consider the cycles on $\tX_{b, i}$, $Z_{b, i} = \pi_i^*(Z) $. We know that $H^1(\calO_{Z_{b,i}}) = H^1(\calO_Z)$, and we have the restriction map $r_i : H^1(\calO_{Z_{b, i}}) \to H^1(\calO_{Z_1})$, and $w$ gives unique maps $r_i^{-1}(\mfl) \to \bC$ (up to additive constant).
The map $r_i^{-1}(\mfl) \to \bC$ defines also linear maps $V_{Z_{b, i}}(I) \to \bC$ for all subsets $I \subset \calv_{b, i} \setminus \calv_1$, we still denote them by $w$.
We know $\eca^{-E_{v_t}^*}(Z_{b, t}) \to A(\im(c^{-E_{v_t}^*}(Z_{b, t}) )) \to \bC$ is constant, since $w$ hasn't got a pole on the exceptional divisor $E_{v_t}$.

Similarly the differential form $w$ has got a pole on the exceptional divisor $E_{v_{t-1}}$ of order $1$, so the map $\eca^{-E_{v_{t-1}}^*}(Z_{b, t-1}) \to A(\im(c^{-E_{v_{t-1}}^*}(Z_{b, t-1}) )) \to \bC$ depends just on the support point of the divisor, and gives a function $f : \bP - \delta_{t-1} \cdot points \to \bC$.
We blow up $E_{v_{t-1}}$ in a generic point $p$, so we can assume that $w$ has not got an arrow (clousure of set of vanishing of the differential form $w$ in $\tX_{t-1} \setminus E_{v_{t-1}}$) at $p$. It means by Laufer integration formula to that $f$ has non zero differential at $p$.

Consider the cycle $Z'_{t-1}= Z_{b,t} - Z_v \cdot E_{v_t} $ on $\tX_{b, t}$, and let $r' : H^1(\calO_{Z'_{t-1}}) \to  H^1(\calO_{Z_1})$ be the restriction map.

We have the surjective map $H^1(\calO_{Z_{b, t}}) \to H^1(\calO_{Z'_{t-1}})$ and the injective dual map $$\frac{H^0(\calO_{\tX_t}( K_t + Z'_{t-1})) }{H^0(\calO_{\tX_t}( K_t))} \to \frac{H^0(\calO_{\tX_t}(K +Z_{b, t})) }{H^0(\calO_{\tX_t}(K_t))}.$$ Since $w$ has not got a pole along the exceptional divisor $E_{v_t}$, we get that $w \in \frac{H^0(\calO_{\tX_t}( K_t + Z'_{t-1})) }{H^0(\calO_{\tX_t}( K_t))}$.

It means that if we have a line bundle $\calL' \in  \pic^{0}(Z_{b, t}) \cong H^1(\calO_{Z_{b, t}})$, then the value of $w$ on the line bundle $\calL'$  depends only on the restriction $\calL' | Z'_{t-1}$.

\medskip

\emph{We can define a function $\eta: \eca^{-E_{v_{t-1}}^*}(Z'_{t-1}) \to \bC$ in the following:}

\medskip

Fix a divisor $D' \in \eca^{-E_{v_{t-1}}^*}(Z'_{t-1})$ and for any divisor $D'' \in \eca^{-E_{v_{t-1}}^*}(Z'_{t-1})$ define $$\eta(D'') = w(\calO_{Z'_{t-1}}(D''- D')).$$

Since the differential form $w$ has got a pole along the exceptional divisor $E_{v_{t-1}}$ of order $1$, we can easily see that the map $\eta$ depends only on the support of the divisor $D''$ and it gives a map $g :  \bP - \delta_{t-1} \cdot points \to \bC$, where $ g = f +  c$ for some $c \in \bC$.
Indeed if the support of the divisor in $\eca^{-E_{v_t}^*}(Z_{b, t})$ is not $p$, then the statement is trivial by pushing it down to $Z_{t-1}$ and it follows in $p$ by continuity, so we get that the derivative of $g$ doesn't vanish in $p$.

\medskip

\emph{Similarly we can define a function $h: \eca^{R(\pi^*(l'_m)), \mfl}(Z'_{t-1}) \to \bC$ by the same manner:}
\medskip
Fix a divisor $D^* \in \eca^{R(\pi^*(l'_m)), \mfl}(Z'_{t-1})$ and for any divisor $D'' \in \eca^{R(\pi^*(l'_m)), \mfl}(Z'_{t-1})$ define $h(D'') = w(\calO_{Z'_{t-1}}(D''- D^*))$.
It is clear from our condition on $w$, that the map $h$ has derivative $0$ in $D$.

\medskip

Next we will use the relative genericity of the resolution $\tX$ by deforming $\tX$ in a way such that $\tX_1$ stays the same.
Define a germ $S = (\bC, 0)$ of singularities $\tX_u$ by fixing the tubular neighborhood of the exceptional divisors $E_v, v \in \calv, E_{v_1}, ..., E_{v_{t-1}}$ and we change the plumbing with the tubular neighborhood of $ E_{v_{t}}$ such that the derivative of the intersection point $ E_{v_{t-1}} \cap E_{v_{t}}$ doesn't vanish. 
An explicit formula for such a deformation is described in \cite{NNA2}.

Let us denote by $\calL_{u} = \calO_{\tX_u}(\pi^*(l'))$ the natural line bundle on $\tX_u$ with the same Chern class as $\pi^*(\calL)$.
If the original singularity was relatively generic with respect to $\tX_1$ and the cuts $D_{v_2}$, then the combinatorial type of the subspace complement $H^0(Z_u, \calL_u)_{reg}$ remains stable and rises to a fibration over $S$. It means that if $u$ is small enough, there is a family of divisors $D_u \in \eca^{R(\pi^*(l'_m)), \mfl}(Z'_{t-1})$, such that $D_u \in H^0(Z_u, \calL_u)_{reg}$ and $D_0 = D$.

Notice that if $\calL'$ is a line bundle on $\tX_u$ with Chern class $\pi^*(l')$ we can still speak about the value of $w$ on $\calL' $, by defining it as 
$w(\calL') = w(\calL'|Z'_{t-1}  \otimes \calO_{Z'_{t-1}}(-D^*))$.
Notice that we have $\frac{d}{du}(h(D_u)) = 0$.

On the other hand we have 
$$\calL_u = \calO_{Z_u}(-\sum_{s \in \calv}a_s E_s - \sum_{1 \leq j \leq t-1}a_v E_{v_j}- a_v E_{v_t, u}).$$

This means that we have the following equation:

\begin{equation*}
\frac{d}{du}(h(D_u)) = - a_v \cdot \frac{d}{du}(  f(E_{v_t, u} \cap E_{v_{t-1}})).
\end{equation*}

On the other hand we know that $df \neq 0$ in $p$, this is a contradiction which indeed proves part 1) completely.

\bigskip

\textbf{For part 2)} notice that since $ c^{l'_m}(Z) : \eca^{l'_m, \mfl}(Z) \to r^ {-1}(\mfl  )$ is a submersion in $D$, it is trivial that $(l'_m,\mfl)$ is relative dominant on the cycle $Z$, which is euqivalent to:

\begin{equation*}
\chi(-l'_m)- h^1(Z_1, \mfl) < \chi(-l'_m+l)-  h^1((Z-l)_1, \mfl(-l)),
\end{equation*}
for all cycles $0 < l \leq Z$.

We know that  $ c^{l'_m}(Z) : \eca^{l'_m, \mfl}(Z) \to r^ {-1}(\mfl  )$ is a submersion in $D$, which means that $\im( T_{D}(c^{l'_m}(Z)))$ contains $T_{c^{l'_m}(D)}r^ {-1}(\mfl  )$. On the other hand we know that $ \im(T_{D}( r \circ c^{l'_m}(Z)))$ has codimension $h^1(Z_1, \mfl)$ in $H^1(\calO_{Z_1})$, which means
$\im( T_{D}(c^{l'_m}(Z)))$ has codimension $h^1(Z_1, \mfl)$ in $H^1(\calO_Z)$. This gives the desired equality $h^1(Z, \calL) = h^1(Z_1, \mfl)$.

\end{proof}

\section{Base points of natural line bundles on relatively generic surface singularities}

In this section we wish to determine the base point structure of natural line bundles on relatively generic surface singularities.
We will consider the bit more general case of restricted natural line bundles for some technical reasons.

\medskip

Fix the setup in the following:

\medskip 

Consider three rational homology sphere resolution graphs $\mathcal{T}_1 \subset \mathcal{T} \subset \mathcal{T}'$ and a fixed singularity $\tX_1$ corresponding to the resolution graph $\mathcal{T}_1$ and for every vertex $v \in \calv'$ which has a neighbour in $\calv$ a cut $D_v$.
Let $\tX_1$ be a fixed singularity corresponding to $\mathcal{T}_1$ and assume that $\tX'$ is a relatively generic singularity with resolution graph $ \mathcal{T}'$  with respect to $\tX_1$ and the cuts  $D_v$.
Fix furthermore an element $l'_{top} \in L'_{\mathcal{T}'}$, such that if $v \in \calv$ we have $(l'_{top})_v > 0$. Let us denote $ R(l'_{top})= l'_d  \in L'_{\mathcal{T}}$ and $\calL = \calO_{\tX_1}(- l'_{top})$,  $\calL_d = \calO_{\tX}(- l'_{top})$.
Assume that the pair $(-l_d, \calO_{\tX_1}(-l'_{top}))$ is relatively dominant. This means by Theorem\ref{relgen} that $H^0(\tX, \calL_d)_{reg} \neq \emptyset$ and $h^1(\tX_1,  \calL) = h^1(\tX, \calL_d)$.

\begin{theorem}\label{basepointrel}
Assume that the line bundle $\calL$ has no base points on $\tX_1$, then:

\medskip

1) The line bundle $\calL_d$ has no base points at the intersection points of exceptional divisors, and it has got only simple base points.

2) If $v \in \calv$, then $ \calL_d$ has base point along the exceptional divisor $E_v$ if and only if there exists a cycle $A \geq E_v$, such that:

\begin{equation*}
\chi(l'_d) - h^1(\tX_1, \calL) + 1 = \chi(l'_d + A) - h^1(\tX_1, \calL( -A)).
\end{equation*}

Furthermore if there is a base point on $E_v$, then there are $-(l'_d, E_v)$ distinct base points along the exceptional divisor $E_v$.
\end{theorem}
\begin{remark}
Theorem \textbf{C} follows immediately, if we apply the theorem in the situation when $\mathcal{T} = \mathcal{T}'$ and $\calL_d = \calO_{\tX}(- Z_{max})$.
\end{remark}

\begin{proof}

Notice first that the pair $(-l'_d, \calO_{\tX_1}(-l'_{top}))$ is relatively dominant, which means by Theorem\ref{th:dominantrel} that:

\begin{equation*}
\chi(l'_d) - h^1(\tX_1, \calL) < \chi(l'_d + l) - h^1(\tX_1, \calL( -l) ),
\end{equation*}
for all $0 < l  \in L_{\mathcal{T}}$.

\medskip

We will prove statements 1) and 2), by a simultaneous induction on the parameter $ p_g(\tX) - p_g(\tX_1)$.

\bigskip

\textbf{Proof in the base case $ p_g(\tX) = p_g(\tX_1)$:}

\medskip
In this case since the line bundle $\calL$ has no base points on $\tX_1$, we get that $\calL_m$ hasn't got also any base points.

Indeed if $s \in H^0(\tX_1, \calL)_{reg}$ is a generic section and $D = |s|$, then if we have a divisor $D'$ on $\tX$ such that the restriction of $D'$ equals $D$, then by the condition
$ p_g(\tX) = p_g(\tX_1)$ we get that $\calO_{\tX}(D') = \calL_d$.
Since $\calL$ has no base points, this proves indeed that the line bundle $\calL_d$ hasn't got any base points too.

We only need to prove in the following that if $-(l'_d, E_v) > 0$ and $A > 0 , A \in L_{\mathcal{T}}$, such that $E_v \in |A|$, then one has:

\begin{equation*}
\chi(l'_d) - h^1(\tX_1, \calL) + 1 < \chi(l'_d + A) - h^1(\tX_1, \calL(-A)),
\end{equation*}

Asume to the contrary that:

\begin{equation*}
\chi(l'_d) - h^1(\tX_1, \calL) + 1 = \chi(l'_d + A) - h^1(\tX_1, \calL(-A) ),
\end{equation*}

Consider the unique line bundle $ \calL_{gen} \in \pic^{-l'_d}(\tX)$, such that $r(\calL_{gen}) = \calL$, we know that $ \calL_{gen} = \calL_d$ and it is oviously relatively generic.

We have the following exact sequence:

\begin{equation*}
0 \to H^0(\tX, \calL_{gen}(- E_v)) \to H^0(\tX, \calL_{gen}) \to H^0(E_v, \calL_{gen} ) \to  H^1(\tX, \calL_{gen}(- E_v)) \to H^1(\tX, \calL_{gen}) \to 0.
\end{equation*}

By Theorem \textbf{A} we get:
\begin{equation*}
h^1(\tX, \calL_{gen}(- E_v))  = \chi(l'_d + E_v) - \min_{0 \leq l \in L_{\mathcal{T}}}( \chi(l'_d + E_v+ l) - h^1(\tX_1, \calL(-l - E_v))).
\end{equation*}

\begin{equation*}
h^1(\tX, \calL_{gen}( - E_v))  = \chi(l'_d + E_v) - ( \chi(l'_d + A) - h^1(\tX_1, \calL(-A))) = \chi(l'_d + E_v) - (\chi(l'_d) - h^1(\tX_1, \calL) + 1).
\end{equation*}

\begin{equation*}
h^1(\tX, \calL_{gen}(- E_v)) = -(l'_d, E_v) + h^1(\tX_1, \calL).
\end{equation*}

It means by the exact sequence that the dimension of the image of the map $ H^0(\tX, \calL_{gen}) \to H^0(E_v, \calL_{gen} )$ is $1$, which means that $\calL_{d} = \calL_{gen}$ has got a base point on $E_v$, which is a contradiction. Since there are no base points of the line bundle $\calL_d$, part 2) is obvious. It finishes the proof in the case $p_g(\tX) = p_g(\tX_1)$.

Consider next the induction step, assume that $p_g(\tX) - p_g(\tX_1) = k$ and we know part 1) and 2) in cases when $ p_g(\tX) - p_g(\tX_1) <k$.

\bigskip

\textbf{Proof of the fact that there are no base points at intersection points: }

Assume that the line bundle $\calL_d$ has got a base point at the intersection point of two exceptional divisors $E_v$ and $E_w$, and blow up this intersection point, let us denote the blow up map by $\pi$ and new exceptional divisor by $E_{new}$. Let $\tX_{1, new}, \tX_{new}, \tX'_{new}$ be the new resolutions (notice that $\tX_{1, new} = \tX_1$ if and only if $v, w \notin \calv_1$).

Notice that $\tX'_{new}$ is a relatively generic singularity with respect to $\tX_{1, new}$ and the strict transforms of the original cuts.

\medskip

Notice that it is enogh to prove that the pair $(-\pi^*(l'_d), \calO_{\tX_{1, new}}(-\pi^*(l'_{top})) )$ is relatively dominant on $\tX_{new}$, because it would yield $H^0( \calO_{\tX_{new}}(-\pi^*(l'_{top})) )_{reg} \neq \emptyset$.  However this contradictions to the fact that the line bundle $\calL_d$ has a base point at the intersection point of the two exceptional divisors $E_v$ and $E_w$. Denote $\calL' = \calO_{\tX_{1, new}}(-\pi^*(l'_{top}))$, we have to prove that for all cycles $l \in L$ and nonnegative integer $a \geq 0$, with the condition that if $l = 0$, then $a > 0$, we have:

\begin{equation*}
\chi(\pi^*(l'_d)) - h^1(\tX_{1, new},  \calL') < \chi(\pi^*(l'_d + l) + a E_{new}) - h^1(\tX_{1, new}, \calL' (- \pi^*(l) - a E_{new}) ).
\end{equation*}

\begin{equation*}
\chi(l'_d) -   h^1(\tX_{1}, \calL)< \chi(l'_d + l) + \frac{a(a+1)}{2} -  h^1(\tX_{1, new}, \calL' (-\pi^*(l) - a E_{new})),
\end{equation*}
where we have $\calL' = \calO_{\tX'_{1, new}}(-\pi^*(l'_{top}))$.

\medskip

The conditions for the cycle $l$ and the number $a$ express the facts that $\pi^*(l'_d) + \pi^*(l)+ a E_{new} > \pi^*(l'_d)$ and $\pi^*(l'_d + l) + a E_{new} \in S'_{new}$.

\medskip

\underline{Assume first that $l= 0$ and $a > 0$.}

\medskip

We have two cases in the following:

\medskip

If $v, w \notin \calv_1$, then $ h^1(\tX_{1, new}, \calL' (-\pi^*(l) - a E_{new})) = h^1(\tX_{1}, \calL)$, and we have to prove that:

\begin{equation*}
\chi(l'_d) -   h^1(\tX_{1},  \calL)< \chi(l'_d) + \frac{a(a+1)}{2} -  h^1(\tX_{1}, \calL),
\end{equation*}
which is trivial.

\medskip

In the other case if $v\in \calv_1$ or $w \in \calv_1$, then we have $E_{new} \in \calv_{1, new}$ and we have to prove that:

\begin{equation*}
\chi(l'_d) - h^1(\tX_{1, new},  \calL') < \chi(l'_d) + \frac{a(a+1)}{2} - h^1(\tX_{1, new}, \calL'( - a E_{new})).
\end{equation*}

\begin{equation*}
 - h^1(\tX_{1, new},  \calL') <  \frac{a(a+1)}{2} - h^1(\tX_{1, new}, \calL'(- a E_{new})).
\end{equation*}

We know that $H^0(\tX_{1, new}, \calL'(- a E_{new}) ) \subset  H^0(\tX_{1, new},  \calL') $ and $H^0(\tX_{1, new}, \calL'(- a E_{new}) ) \neq H^0(\tX_{1, new},  \calL') $ because the line bundle $\calL$ does not have a base point on $\tX_1$, from which we get:

\begin{equation*}
 h^1(\tX_{1, new},  \calL')  - \chi ( -c^1(\calL'))   >  h^1(\tX_{1, new}, \calL'(- a E_{new})) - \chi ( -c^1(\calL') + a E_{new}).
\end{equation*}

This exactly gives:

\begin{equation*}
 h^1(\tX_{1, new},  \calL')  + \frac{a(a+1)}{2}  >   h^1(\tX_{1, new}, \calL'(- a E_{new})).
\end{equation*}

\medskip

\underline{Assume in the following that $l> 0$ and $a \geq 0$, in this case we have to prove:}

\medskip

\begin{equation*}
\chi(l'_d) -   h^1(\tX_{1},  \calL)< \chi(l'_d + l) + \frac{a(a+1)}{2} - h^1(\tX_{1, new}, \calL' (-\pi^*(l) - a E_{new}) ).
\end{equation*}

We know that $ (-l'_d, \calL) $ is relatively dominant, which means:

\begin{equation*}
\chi(l'_d) -   h^1(\tX_{1},  \calL)< \chi(l'_d + l) - h^1(\tX_{1}, \calL (-l)).
\end{equation*}

\medskip

It means that we only have to prove $\frac{a(a+1)}{2} - h^1(\tX_{1, new}, \calL' (-\pi^*(l) - a E_{new}) ) \geq  -h^1(\tX_{1}, \calL(-l))$.
We know that  $h^1(\tX_{1}, \calL( -l)) = h^1(\tX_{1, new}, \calL'(- \pi^*(l)))$ and $ H^0(\tX_{1, new}, \calL' (-\pi^*(l) - a E_{new}) ) \subset H^0(\tX_{1, new}, \calL'(-  \pi^*(l)))$, from which we get:

\begin{equation*}
h^1(\tX_{1, new}, \calL'(- \pi^*(l)) ) - \chi(-c^1(\calL' (-\pi^*(l)))) \geq h^1(\tX_{1, new}, \calL' - (\pi^*(l) + aE_{new}) ) - \chi(-c^1(\calL'(-\pi^*(l)  -aE_{new})) ) .
\end{equation*}
This exactly gives:
\begin{equation*}
h^1(\tX_{1, new}, \calL'(- \pi^*(l))) \geq h^1(\tX_{1, new}, \calL' (-\pi^*(l) - aE_{new}) ) - \frac{a(a+1)}{2}.
\end{equation*}

It finishes the prof that the line bundle $\calL_d$ does not have a base point at the intersection point of exceptional divisors. 

\bigskip

\textbf{Proof of the fact that the line bundle $\calL_d$ has got only simple base points:}

\medskip

In the following we want to show that the line bundle $\calL_d $ has only simple base points.
We know that $p_g(\tX) > p_g(\tX_1)$, which means by \cite{NNA1} that there is a vertex $u \in \calv \setminus \calv_1$ such that, there are differential forms in $H^1(\calO_{\tX})^* \cong H^0(\tX\setminus E, \Omega^2_{\tX})/ H^0(\tX,\Omega_{\tX}^2)$, which have got pole on the exceptional divisor $E_u$.

Blow up $E_u$ in generic points sequentially and let the new vertices be $u_0 = u, u_1, \cdots, u_t$. Let $t$ be the maximal positive integer, such that there are nonzero differential forms in $H^1(\calO_{\tX})^* \cong H^0(\tX\setminus E, \Omega^2_{\tX})/ H^0(\tX,\Omega_{\tX}^2)$ having pole on $E_{u_t}$.
Denote the blown up singularities by $\tX_{b, 1} = \tX_1, \tX'_b, \tX_b$ and the subsingularity with resolution graph $(\mathcal{T} \setminus u_t) \cup u_1, \cdots, u_{t-1}$ by $\tX_s$.
Notice that in the case $t = 0$ the singularity $\tX_s$ is not nessecarily connected.

If $t> 0$ then denote the intersection point $E_{u_{t-1}} \cap E_{u_t}$ by $q$ and if $t =0$ then denote the set of intersection points $ E_u \cap (E - E_u)$ by $(q_1, \cdots, q_k)$. 
We know that $p_g (\tX_s) - p_g(\tX_1) < p_g (\tX) - p_g(\tX_1)$ and $\tX_s$ is relatively generic to $\tX_1$, which means that we can use the induction hypothesis for the pair of singularities $(\tX_s, \tX_1)$.

Consider the line bundle $\calL_s = \calO_{\tX_{s}}(- \pi^*(l'_{top}))$, notice that the pair $(-R_s(\pi^*(l'_d)), \calL)$ is relatively dominant on $\tX_s$, since $H^0(\tX_s, \calL_s)_{reg} \neq \emptyset$.  By the induction hypothesis we get that the line bundle $\calL_s$ has only simple base points only at regular points of the exceptional divisors $E_v$, where $v \in \calv_s$.

We also know by the other induction hypothesis that if $\calL_s$ has a base point on $E_v$, where $v \in \calv_s$, then it has $-(l'_d, E_v)$ disjoint base points at regular points of $E_v$.
In this case we get that $\calO_{\tX_{b}}(- \pi^*(l'_{top}))$ or equivalently $ \calL_d $ has $-(l'_d, E_v)$ disjoint base points at regular points of $E_v$, and then trivially they are simple base points.

Thus we can assume that $v \in \calv$ and $\calL_s$ does not have a base point on the exceptional divisor $ E_v$. 
It means in particular that a generic section $s \in H^0(\tX_s, \calL_s)$ has a divisor which has $-(l'_d, E_v)$ dijsoint transversal cuts on the regular part of the exceptional divisor $E_v$.

\medskip

\emph{We will use our usual deformation technique in the following:}

Assume that $N$ is a large number, such that $\dim(\im(c^{-N E_{u_t}^*}))= e_{\tX_b}(u_t)$, and blow up the exceptional divisor $E_{u_t}$ in $N$ distinct generic points $p_1, \cdots p_N$, and let the new exceptional divisors be $E_{w_1}, \cdots , E_{w_N}$.
Denote the new singularity by $\tX_{new}$ and its subsingularity with vertex set $\calv \cup u_1 , \cdots , u_t$ by $\tX_u$. 
We know that $\tX_{new}$ has got a relatively generic analytic structure with respect to $\tX_1$ and the cuts.
We also know similarly as before that $p_g(\tX_u) = p_g(\tX_{new}) = p_g(\tX)$.

It means that $\pic^{-\pi^*(l'_d)}(\tX_{new}) \cong \pic^{-R_u(\pi^*(l'_d))}(\tX_u)$ and the isomorphism map is given by the restriction map $r_u$ ($R_u$ is the corresponding cohomological restriction operator).

Next we will deform the analytic structure of $\tX_{new}$ in such a way that we fix the analytic type of $\tX_1$ and the cuts.

We fix the analytic type of the subsingularity $\tX_u$ and change the plumbing with the tubular neighborhood of the exceptional divisors be $E_{w_1}, \cdots , E_{w_N}$ in such a way
that we move the contact points $p_1, \cdots, p_N$.
Notice that from the definition of the number $t$, we know that the cohomological cycle of $\tX_u$ has got multiplicity one on the vertex $u_t$. It means that if $D'$ is a smooth divisor
intersecting the exceptional divisor $E_{u_t}$ at a regular point transversally then the line bundle $\calO_{\tX_u}(D')$ depends only on the intersection point $D' \cap E_{u_t}$.
We also know that if we move the contact points $p_1, \cdots, p_N$ and the plumbing with the tubular neighborhood of the exceptional divisors $E_{w_1}, \cdots , E_{w_N}$
minorly then the given analytic type on $\tX_{new}$ remains relatively generic.

Notice that $\dim(\im(c^{-N E_{u_t}^*}))= e_{\tX_b}(u_t)$ and the coefficients of $E_{w_1}, \cdots, E_{w_N}$ in $ \pi^*(l'_{top})$ are positive.
It means that exactly the same way as in Lemma \ref{coveropen} we get that  if we move the contact points $p_1, \cdots, p_N$, then the possible values of the restricted line bundle $\calL_u = r_u( \pi^*(\calL_d) ) $ cover an open set in $ r_s^{-1}(\calL_s) \subset \pic^{-R_u(\pi^*(l'_d))}(\tX_u)$.

It means that for a generic choice of the contact points $p_1, \cdots, p_N$, the line bundle $\calL_u$ becomes a generic line bundle in $ r_s^{-1}(\calL_s)$.

We prove the following easy lemma:

\begin{lemma}\label{genericdisjoint}
For a generic line bundle $\calL_u \in  r_s^{-1}(\calL_s)$ the generic section in $H^0(\tX_u, \calL_u)$ has got  $-(l'_d, E_v)$ disjoint cuts along the exceptional divisor $E_v$.
\end{lemma}

\begin{proof}

Notice that a generic section $s \in H^0(\tX_s, \calL_s)$ has got a divisor which has got $-(l'_d, E_v)$ disjoint cuts along the exceptional divisor $E_v$. 

Consider the two dominant maps $H^0(\tX_s, \calL_s )_{reg, q} \to \eca^{- \pi^*(l'_{top}), \calL_s}(Z) \to  r_s^{-1}(\calL_s)$, 
where $H^0(\tX_s, \calL_s )_{reg, q}$ is the open set of regular sections, which do not vanish at the point $q$ (or at the points $q_1, \cdots, q_k$, if $t = 0$) and $Z$ is a very large cycle.
The first map simply takes a section to its divisor and the second map is the Abel map $c^{- \pi^*(l'_{top})}(Z) $ restricted to the subset $ \eca^{- \pi^*(l'_{top}), \calL_s}(Z) \subset \eca^{- \pi^*(l'_{top})}(Z)$, so the relative Abel map.

The first map is clearly surjective since every divisor $ D \in \eca^{- \pi^*(l'_{top}), \calL_s}(Z)$ determines the line bundle $\calL_s$ on the subsingularity $\tX_s$ and the second map is 
dominant since for a generic line bundle $\calL_u \in  r_s^{-1}(\calL_s)$ we have $H^0(\tX_u, \calL_u)_{reg} \neq \emptyset$.
Since the generic section in $H^0(\tX_s, \calL_s )_{reg, q}$ has got a divisor which has got $-(l'_d, E_v)$ disjoint arrows on the exceptional divisor $E_v$ it indeed follows that the generic section in $H^0(\tX_u, \calL_u)$ has got  $-(l'_d, E_v)$ disjoint arrows on the exceptional divisor $E_v$.

\end{proof}

Notice that we have the isomorphism $\pic^{-\pi^*(l'_d)}(\tX_{new}) \cong \pic^{-R_u(\pi^*(l'_d))}(\tX_u)$, and the line bundle $\calL_u$ does not have base points at the intersection of exceptional divisors. From this we get that the line bundle $\calO_{\tX_{new}}(- \pi^*(l'_{top})) $ has a section which has got $-(l'_d, E_v)$ dijsoint arrows on the exceptional divisor $E_v$. It means that the line bundle $\calL_u$ can have only simple base points on the exceptional divisor $E_v$ and this proves part 1) completely.

\bigskip

\textbf{Proof of part 2):}

\medskip

Assume first that $v \in \calv$, and there exists a cycle $A \geq E_v$, such that:

\begin{equation*}
\chi(l'_d) - h^1(\tX_1, \calL) + 1 = \chi(l'_d + A) - h^1(\tX_1, \calL(-A)).
\end{equation*}
We want to prove that there are $-(l'_d, E_v)$ distinct base points along $E_v$.

\medskip

Consider the the following exact sequence:

\begin{equation*}
0 \to H^0(\tX,  \calL_{d}(- E_v)) \to H^0(\tX, \calL_{d}) \to H^0(E_v, \calL_{d}  ) \to  H^1(\tX, \calL_{d}(- E_v))  \to H^1(\tX, \calL_{d}) \to 0.
\end{equation*}

Notice that $h^1(\tX, \calL_{d}) = h^1(\tX_1, \calL)$ and we have the following by Theorem\ref{relgen}:

\begin{equation*}
h^1(\tX, \calL_{d}( - E_v)) = \chi(l'_d + E_v) - \min_{0 \leq l \in L_{\mathcal{T}}}( \chi(l'_d + E_v + l) - h^1(\tX_1, \calL(-l - E_v) ). 
\end{equation*}

\begin{equation*}
h^1(\tX, \calL_{d}(- E_v)) = \chi(l'_d + E_v) - ( \chi(l'_d) - h^1(\tX_1, \calL) + 1)  = -(l'_d, E_v)  + h^1(\tX_1, \calL).
\end{equation*}

This means that the dimension of the image of the map $H^0(\tX, \calL_{m}) \to H^0(E_v, \calL_{d}  )$ is $1$, and since there are only simple base points on $E_v$ by part 1), we get that there $-(l'_d, E_v)$ disjoint base points along $E_v$.

\bigskip

Assume in the following that $v \in \calv$ and for every integer cycle $A \geq E_v$, one has:

\begin{equation*}
\chi(l'_d) - h^1(\tX_1, \calL) + 1 < \chi(l'_d + A) - h^1(\tX_1, \calL(-A)).
\end{equation*}

\medskip
By the same calculation as before we immediately get that the dimension of the image of the map $H^0(\tX, \calL_{d}) \to H^0(E_v, \calL_{d})$ is more than $1$.
It means that there can't be $-(l'_d, E_v)$ disjoint base points along the exceptional divisor $E_v$.

We know that $p_g(\tX) > p_g(\tX_1)$, which means similarly as in the proof of part 1) that there is a vertex $u \in \calv \setminus \calv_1$, such that there are differential forms in $H^1(\calO_{\tX})^* \cong H^0(\tX\setminus E, \Omega^2_{\tX})/ H^0(\tX,\Omega_{\tX}^2)$, which have got pole on the exceptional divisor $E_u$.

Blow up $E_u$ in generic points sequentially and let the new vertices be $u_0 = u, u_1, \cdots, u_t$ and let $t$ be the maximal positive integer, such that there are nonzero differential forms in $H^1(\calO_{\tX})^* \cong H^0(\tX\setminus E, \Omega^2_{\tX})/ H^0(\tX,\Omega_{\tX}^2)$ having pole on $E_{u_t}$.
Denote the new singularities by $\tX_{b, 1} = \tX_1, \tX'_b, \tX_b$ and the subsingularity with resolution graph $\mathcal{T} \setminus u_t \cup u_1, \cdots, u_{t-1}$ by $\tX_s$,  notice that in the case $t = 0$ the singularity $\tX_s$ is not nessecarily connected.
Notice that $\tX_s$ is a relatively generic singularity corresponding to $\tX_1$ and the cuts and $\tX_b$ is a relatively generic singularity corresponding to $\tX_s$ (and the corresponding cuts).

We know that $p_g (\tX_s) - p_g(\tX_1) < p_g (\tX) - p_g(\tX_1)$ and $\tX_s$ is relatively generic to $\tX_1$, which means that we can use our statements for the pair $(\tX_s, \tX_1)$.
Let us denote $ \calL_s = \pi^*(\calL_d) | \tX_s$.

From the fact that the dimension of the image of the map $H^0(\tX, \calL_{d}) \to H^0(E_v, \calL_{d})$ is more than $1$ we get immediately that the dimension of the image of the map $H^0(\tX_s, \calL_{s}) \to H^0(E_v, \calL_{s})$ is more than $1$.
We can use the induction hipothesis for the pair $(\tX_s, \tX_1)$, which means that the line bundle $\calL_s$ does not have a base point along the exceptional divisor $E_v$. 
If $\calL_s$ has some base points on $ \tX_s$ outside of $E_v$, then blow them up.

\medskip

\emph{This means that we reduced the statement of part 2) to the case, when $|\calv \setminus \calv_1| = 1$, a vertex called $u$, and the differential forms in $ H^0(\tX\setminus E, \Omega^2_{\tX})/ H^0(\tX,\Omega_{\tX}^2)$ have a pole along the exceptional divisor $E_u$ with order at most $1$.}

\medskip

We have to prove that if $v \in \calv$ arbitrary and the dimension of the image of the map $H^0(\tX, \calL_{d}) \to H^0(E_v, \calL_{d} )$ is more than $1$, then $\calL_{d}$ does not have a base point along the exceptional divisor $E_v$.
Let $p \in E_v$ be a generic point and blow up the exceptional divisor $E_v$ in $p$.
Denote the new exceptional divisor by $E_{new}$ and the new singularities by $\tX_{1, new}, \tX_{new}, \tX'_{new}$.

\medskip

With this setup we prove first the next key lemma:

\begin{lemma}\label{sreldom}
1) Assume that $v \in \calv_1$ and consider the line bundle $\calL_{new} = \pi^*(\calL)(- E_{new})$ on $\tX_{1, new}$, we claim that $(-\pi^*(l'_d)- E_{new}, \calL_{new})$ is relatively dominant on $\tX_{new}$.
\medskip

2) Assume that $v=u$, we claim that $(-\pi^*(l'_d)- E_{new}, \calL)$ is relatively dominant on $\tX_{new}$.

\end{lemma}
\begin{proof}

\textbf{For part 1)} by Theorem\ref{th:dominantrel} we have to prove that for all cycles $0 < \pi^*(l) + a E_{new} \in L_{\mathcal{T}_{new}}$, such that $\pi^*(l'_d) + (a+1) E_{new} + \pi^*(l) \in S'_{\mathcal{T}_{new}}$ and $H^0(\tX_{1, new}, \pi^*(\calL)(-\pi^*(l) - (a+ 1)E_{new}))_{reg} \neq \emptyset$ one has:

\begin{equation*}
\chi(\pi^*(l'_d) + E_{new}) - h^1(\tX_{1, new}, \calL_{new}) < \chi(\pi^*(l'_d) + (a+1) E_{new} + \pi^*(l)) - h^1(\tX_{1, new}, \pi^*(\calL)(-\pi^*(l) - (a+ 1)E_{new})).
\end{equation*}

We know that $h^1(\tX_{1, new}, \calL_{new}) = h^1(\tX_{1}, \calL)$ since the line bundle $\calL$ does not have a base point at the point $p$.
This means we have to prove:

\begin{equation*}
\chi(l'_d)  + 1 - h^1(\tX_{1}, \calL) < \chi(l'_d + l) + \frac{(a+1)(a+2)}{2} - h^1(\tX_{1, new}, \pi^*(\calL) (-\pi^*(l) - (a+ 1)E_{new}) ).
\end{equation*}

\medskip

\underline{Assume first that $l >0$:} 

\medskip

In this case we know that
\begin{equation*}
\chi(l'_d)  + 1 - h^1(\tX_{1}, \calL) \leq \chi(l'_d + l)  - h^1(\tX_1, \calL(-l)).
\end{equation*}

It means that we only have to prove:

\begin{equation*}
\frac{(a+1)(a+2)}{2} - h^1(\tX_{1, new}, \pi^*(\calL) (-\pi^*(l) - (a+ 1)E_{new}) ) > - h^1(\tX_1, \calL(-l)).
\end{equation*}

However it immediately follows from $H^0(\tX_1, \calL(- l))_{reg} \neq \emptyset$, $H^0(\tX_{1, new}, \pi^*(\calL) (-\pi^*(l) - (a+ 1)E_{new}))  \subset H^0(\tX_1, \calL(-l))$ and
$H^0(\tX_{1, new}, \pi^*(\calL) (-\pi^*(l) - (a+ 1)E_{new}))  \neq H^0(\tX_1, \calL(-l))$.
The second strict containment follows from the fact that the line bundle $ \calL(-l)$ does not have a base point at $p$, since $p$ is a generic point on $E_v$.

\underline{Assume that $l = 0$:}

\medskip

In this case we have $a > 0$ and we have to prove:

\begin{equation*}
1 - h^1(\tX_{1, new}, \pi^*(\calL)(- E_{new})) <  \frac{(a+1)(a+2)}{2} - h^1(\tX_{1, new}, \pi^*(\calL)(- (a+ 1)E_{new})).
\end{equation*}

For this we only have to prove that $ H^0(\tX_{1, new}, \pi^*(\calL)(- E_{new})) \neq H^0(\tX_{1, new}, \pi^*(\calL)(-  (a+ 1) E_{new}))$. This immediately follows from the fact that $\calL$ does not have a base point on $E_v$, which means that the generic section $s \in H^0(\tX_1, \calL)$ has got $-(l'_d, E_v)$ disjoint arrows on $E_v$.

\bigskip

\textbf{For part 2)} we have to prove that for all cycles $0 < \pi^*(l) + a E_{new} \in L_{\mathcal{T}_{new}}$, such that $\pi^*(l'_d) + (a+1) E_{new} + \pi^*(l) \in S'_{\mathcal{T}_{new}}$ 
and $H^0(\tX_1, \calL(-\pi^*(l) - (a+ 1)E_{new}) )_{reg} \neq \emptyset$ one has:

\begin{equation*}
\chi(\pi^*(l'_d) + E_{new}) - h^1(\tX_{1}, \calL) < \chi(\pi^*(l'_d) + (a+1) E_{new} + \pi^*(l)) - h^1(\tX_1, \calL(-\pi^*(l) - (a+ 1)E_{new}) ).
\end{equation*}

\begin{equation*}
\chi(l'_d)  + 1 - h^1(\tX_{1}, \calL) < \chi(l'_d + l)  +  \frac{(a+1)(a+2)}{2} - h^1(\tX_1, \calL(-l)).
\end{equation*}

\medskip

\underline{Assume first that $l_v >0$:}

\medskip

In this case we know that
\begin{equation*}
\chi(l'_d)  + 1 - h^1(\tX_{1}, \calL) < \chi(l'_d + l)  - h^1(\tX_1, \calL(-l)).
\end{equation*}

The statement immediately follows in this case, since $\frac{(a+1)(a+2)}{2} \geq 0$.

\medskip

\underline{Assume that $l >0$, but $l_v = 0$:}

\medskip

In this case we have $a \geq 0$, so $\frac{(a+1)(a+2)}{2} \geq 1$, and $\chi(l'_d)  + 1 - h^1(\tX_{1}, \calL) \leq \chi(l'_d + l)  - h^1(\tX_1, \calL( -l))$.
The statement follows again immediately.

\medskip

\underline{Assume finally that $l =0$:}

\medskip

In this case we have $a > 0$ and we have to prove:

\begin{equation*}
\chi(l'_d)  + 1 - h^1(\tX_{1}, \calL) < \chi(l'_d)  +  \frac{(a+1)(a+2)}{2} - h^1(\tX_1, \calL ).
\end{equation*}

Notice that this is also trivial, because if $a > 0$, then one has $\frac{(a+1)(a+2)}{2} > 1$.
\end{proof}

\medskip

In the following we continue to prove part 2) of our main theorem, assume first that $v \in \calv_1$, which is equivalent to $v \neq u$. 
\medskip

We know that there is a large positive integer $N$, such that $\dim(\im(c^{-N E_{u}^*}(Z))) = e_{\tX}(u)$, where $Z$ is a large cycle on $\tX$ and blow up $E_u$ in $N$ different generic points $p_1, \cdots p_N$ and let the new exceptional divisors be $E_{w_1}, \cdots, E_{w_N}$.
Denote the new singularites we get by $\tX_{new}, \tX'_{new}$ and denote its subsingularity with vertex set $\calv$ by $\tX_u$.
Similarly as before we get $p_g(\tX_u) = p_g(\tX)$.
We know that the differential forms in $H^0(\tX\setminus E, \Omega^2_{\tX_u})/ H^0(\tX,\Omega_{\tX_u}^2)$ have got a pole on the exceptional divisor $E_u$ of order 
at most $1$. This means that if $D'$ is a smooth divisor in $\tX_u$ which intersects the exceptional divisor $E_u$ at a regular point then the line bundle $\calO_{\tX_u}(D')$
depends only on the intersection point $D' \cap E_u$.
Denote $l'_u = R_u(\pi^*(l'_d))$, we know that $\pic^{-\pi^*(l'_d)}(\tX_{new}) \cong \pic^{-l'_u}(\tX_u)$ and the isomorphism map is given by the restriction map $r_u$.

As before we will again deform the relatively generic analytic structure $\tX'_{new}$ by changing the plumbing with the tubular neighborhoods of the exceptional divisors be $E_{w_1}, \cdots, E_{w_N}$ while moving the contact points $p_1, \cdots, p_N$:
If we do such a minor deformation then the analytic structure on $\tX'_{new}$ still remains relatively generic with respect to $\tX_1$ and the cuts.

We know that $\dim(\im(c^{-N E_{u}^*}))= e_{\tX}(u)$ and the coefficients of $E_{w_1}, \cdots, E_{w_N}$ in $ \pi^*(l'_{top})$ are positive. It means that exactly the same way as in Lemma \ref{coveropen} we get that  if we move the contact points $p_1, \cdots, p_N$, then the restricted line bundle $\calL_u = r_u( \calO_{\tX_{new}}(- \pi^*(l'_{top})) ) $ cover an open set in $ r^{-1}(\calL) \subset \pic^{-l'_u}(\tX_u)$.
It means that for a generic choice of the contact points $p_1, \cdots, p_N$ the line bundle $\calL_u$ is a generic line bundle in $ r^{-1}(\calL)$.

\medskip

This means that it is enough to prove the following lemma:

\medskip

\begin{lemma}\label{gennobase}
A generic line bundle $\calL_u \in  r^{-1}(\calL)$ does not have a base point on $E_v$.
\end{lemma}
\begin{proof}

Consider a large cycle $Z$ on $\tX_u$ and  look at the space $\eca^{ -l'_u, \calL}(Z) \subset \eca^{-l'_u}(Z)$ consisting of divisors $D \in \eca^{-l'_u}(Z)$, such that $r(D)$ gives the line bundle $\calL$ on $\tX_1$, this is a smooth and irreducible by Corollary \ref{cor:smoothirreddim}.
Assume to the contrary that a generic line bundle $\calL_u \in  r^{-1}(\calL)$ has a base point on the exceptional divisor $E_v$.
Since the relative Abel map $\eca^{ -l'_u, \calL}(Z)  \to  r^{-1}(\calL)$ is dominant and $\eca^{ -l'_u, \calL}(Z)$ is irreducible, this means that there is open subset $U \in \eca^{ -l'_u, \calL}(Z)$ and a map $ f: U \to E_v$, such that $f(D) \in |D|$ is a base point of the line bundle $\calO_{\tX_u}(D)$.
Consider a generic divisor $D \in U$ which has got $-(l'_d, E_v)$ disjoint arrows on the regular part of $E_v$ and blow up $E_v$ at the generic point $f(D)\in E_v$, let the new exceptional divisor be $E_{f(D)}$. Denote the new singularities by$\tX_{1, b}, \tX_{b}, \tX'_b$ and the line bundle $\calL_b = \pi^*(\calL) \otimes \calO_{\tX_{1, b}}( - E_{f(D)})$, and denote $Z_b = Z + (Z_v -1) E_{f(D)}$.

\medskip

We know that $ \eca^{-l'_u, \calL}(Z) \cap \eca^{-\pi^*(l'_u) -E_{f(D)}}(Z_b) = \eca^{-\pi^*(l'_u) -E_{f(D)}, \calL_b}(Z_b)$.
We can also assume that $D$ is enough generic, such that the intersection $ \eca^{-l'_u, \calL}(Z) \cap \eca^{-\pi^*(l'_u) -E_{f(D)}}(Z_b)$ is transvesal in $D$.
If we consider a small open neighborhood $U_1 \subset \eca^{-\pi^*(l'_u) -E_{f(D)}, \calL_b}(Z_b)$ of $D$, then we get by the continuity of the map $f$ that if $D' \in U_1$, then $f(D') = f(D)$.

This means that $f(D) = f(D')$ is a base point of the line bundle $\calO_{\tX_u}(D')$, which means that $$h^1(\calO_{\tX_b}(D')) = h^1(\calO_{\tX_u}(D')) + 1 = h^1(\tX_1, \calL) + 1.$$

From this we get that the map $\eca^{-\pi^*(l'_u) -E_{f(D)}, \calL_b}(Z_b) \to r^{-1}(\calL_b) \subset \pic^{-\pi^*(l'_u) -E_{f(D)}}(\tX_b)$ cannot be a submersion in any of the points $D' \in U_1$, because otherwise we would have $h^1(\calO_{\tX_b}(D')) = h^1(\tX_{1, b}, \calL_b) =  h^1(\tX_1, \calL)$.

\medskip

Indeed we know by \cite{NNA1} that $h^1(\calO_{\tX_b}(D'))$ equals the codimension of the image of the tangent map $T_{D'}(c^{-\pi^*(l'_u) -E_{f(D)}}(Z_b))$ and
$h^1(\tX_{1, b}, \calL_b)$ equals the codimension of the image of the tangent map $T_{r(D')}(c^{-R(\pi^*(l'_u) -E_{f(D)})}(Z_{1, b}))$.
On the other hand if the map $\eca^{-\pi^*(l'_u) -E_{f(D)}, \calL_b}(Z_b) \to r^{-1}(\calL_b) \subset \pic^{-\pi^*(l'_u) -E_{f(D)}}(\tX_b)$ would be a submersion in the point $D' $
then the image of the tangent map $T_{D'}(c^{-\pi^*(l'_u) -E_{f(D)}}(Z_b))$ would contain the tangent space of $r^{-1}(\calL_b)$.
We also know that the projection of the image of the tangent map $T_{D'}(c^{-\pi^*(l'_u) -E_{f(D)}}(Z_b))$ is exaxtly the  image of the tangent map $T_{r(D')}(c^{-R(\pi^*(l'_u) -E_{f(D)})}(Z_{1, b}))$, so we would indeed get $h^1(\calO_{\tX_b}(D')) = h^1(\tX_{1, b}, \calL_b)$, which is impossible.

On the other hand by part 1) of Lemma\ref{sreldom} we know that the map $\eca^{-\pi^*(l'_u) -E_{f(D)}, \calL_b}(Z_b) \to r^{-1}(\calL_b)$ is dominant, and this is a contradiction, since $U_1 \subset \eca^{-\pi^*(l'_u) -E_{f(D)}, \calL_b}(Z_b)$ is open.
\end{proof}

If $v = u$ the proof is the very same if one use part 2) of Lemma\ref{sreldom}, just even simpler. It finishes the proof of the theorem completely.
\end{proof}

\section{Multiplicity of relatively generic surface singularities}

In this section we aim to ivestigate the multiplicity of a relatively generic singularity $\tX$.
In order to determine the multiplicity of the singularity we need to know the base point structure of the line bundle $\calO_{\tX}(-Z_{max})$.
From Theorem\ref{basepointrel} we know how many simple base points the line bundle $\calO_{\tX}(- Z_{max})$ has on the regular part of an exceptional divisor $E_v$. On the other hand for the computation of the multiplicity of the singularity we also need to know for the base points that for which number $t$ they are $t$-simple.

\medskip

Consider two rational homology sphere resolution graphs $\mathcal{T}_1, \mathcal{T}$, a singularity $\tX_1$ with resolution graph $\mathcal{T}_1$ and cuts $D_{v_2}$.
Assume that $\tX$ is a relatively generic singularity with resolution graph $\mathcal{T}$ with respect to the subsingularity $\tX_1$ (and cuts $D_{v_2}$ on it). 
Let $Z_{max}$ be the maximal ideal cycle of $\tX$. Assume that the line bundle $\calO_{\tX_1}(-Z_{max})$ does not have a base point.

\medskip

We have the following proposition with this setup:

\begin{proposition}\label{tsimple}

Assume that $v \in \calv$ is a vertex such that the line bundle $\calO_{\tX}(- Z_{max})$ has a base point along $E_v$. Let $t \geq 1$ be the maximal integer such that there exists a cycle $A \geq t \cdot E_v$ for which:

\begin{equation*}
\chi(Z_{max}) - h^1(\calO_{\tX_1}(- Z_{max}))+ 1  = \chi(Z_{max} + A) - h^1(\calO_{\tX_1}(- Z_{max} - A)). 
\end{equation*}
\medskip

Let $p$ be a base point of the line bundle $\calO_{\tX}(- Z_{max})$ on the exceptional divisor $E_v$ and let $t_p \geq 1$ the integer for which $p$ is $t_p$-simple, then we have $t_p \geq t$.
\end{proposition}
\begin{proof}

Blow up $E_v$ sequentially, and let the new exceptional divisors be $E_{v_1}, \cdots E_{v_i}$. Denote the singularity we get after the $i$-th blowup by $\tX_i$ and the
blow up map by $\pi_i^* : \tX_i \to \tX$.
At the first step we blow up $E_v$ at the base point $p$.
At the $i$-th step if the line bundle $ \calO_{\tX_i}(- \pi_i^*(Z_{max}) - \sum_{1 \leq j \leq i}j E_{v_j} )$ has a base point along the divisor $E_{v_i}$, then we blow it up again at the base point.
Let us denote the effective integer cycle $ \sum_{1 \leq j \leq i}j E_{v_j} = A_i$.
\medskip

Assume that $i \leq t-1$, we will prove that $\calO_{\tX_i}(- \pi_i^*(Z_{max}) - A_i)$ has a base point on the exceptional divisor $E_{v_i}$, from the alternative definition of $t_p$ we indeed get that $t_p \geq t$.

We will prove it by induction on $i$, if $i = 0$ this follows from our main Theorem\ref{basepointrel}.
Assume that the statement holds for $0, \cdots, i-1$ and notice that this means  $h^1(\calO_{\tX_i}(- \pi_i^*(Z_{max}) - A_i)) = i + h^1(\calO_{\tX}(- Z_{max}))$.

\medskip

Consider the following exact sequence, where we denote $\calL_i=  \calO_{\tX_i}(- \pi_i^*(Z_{max}))$ :

\begin{equation*}
H^0(\tX_{i},  \calL_i (-A_i)) \to  H^0(E_i,  \calL_i(-A_i)) \to H^1(\tX_{i}, \calL_i( -A_i - E_i)) \to  H^1(\tX_{i},  \calL_i(-A_i)) \to 0
\end{equation*}

Notice first that $ h^0(E_i,  \calL_i (-A_i)) = 2$ and $h^1(\tX_{i},  \calL_i(-A_i)) = i + h^1(\calO_{\tX}(- Z_{max}))$.

It means that we only have to prove $ h^1(\tX_{i}, \calL_i (-A_i - E_i)) \geq i+1 + h^1(\calO_{\tX}(- Z_{max}))$.

\medskip

We know that there is a cycle $A \geq t E_v $ such that:

\begin{equation*}
\chi(Z_{max}) - h^1(\calO_{\tX_1}(- Z_{max}))+ 1  = \chi(Z_{max} + A) - h^1(\calO_{\tX_1}(- Z_{max} - A)).
\end{equation*}

It means by Theorem \ref{relgen} that $\dim \left( \frac{H^0(\calO_{\tX}(- Z_{max}))}{H^0(\calO_{\tX}(- Z_{max} - A))} \right)= 1$.

Notice that $H^0(\calO_{\tX_i}(- \pi_i^*(Z_{max} + A)) ) \subset H^0(\tX_{i}, \calL_i(-A_i - E_i))$, which means that:

\begin{equation*}
h^1(\tX_{i}, \calL_i(-A_i - E_i)) \geq h^1( \calO_{\tX_i}(- \pi_i^*(Z_{max} + A))) +  \chi(\pi_i^*(Z_{max}) + A_i + E_i ) - \chi( \pi_i^*(Z_{max}+ A)).
\end{equation*}

\begin{equation*}
h^1(\tX_{i}, \calL_i(-A_i - E_i)) \geq h^1(\calO_{\tX}(- Z_{max} - A)) +  \chi(Z_{max}) + (i+2)  - \chi(Z_{max} + A).
\end{equation*}

\medskip

We also know that $h^1(\calO_{\tX}(- Z_{max})) = h^1(\calO_{\tX}(- Z_{max} - A)) + \chi(Z_{max}) - \chi(Z_{max} + A) + 1$.
We get from this that $h^1(\tX_{i}, \calL_i(-A_i - E_i)) \geq i+1 + h^1(\calO_{\tX}(- Z_{max}))$ and the proposition is proved.

\end{proof}

\begin{proof}[of Theorem \textbf{D}]
Part 1) follows from the fact that $Mult(\tX) = -Z_{max}^2 + \sum_{p \in B} t_p$ where $B$ is the set of base points and $t_p \geq t_v$ if $p \in E_v$.
Under the conditions of part 2) the line bundle $\calO_{\tX}(- Z_{max})$ does not have any base points from our main Theorem\ref{basepointrel}, so indeed we get $Mult(\tX) = -Z_{max}^2$.
\end{proof}

\begin{remark}
Notice that if the line bundle $\calO_{\tX_1}(-Z_{max})$ has a base point, we can use Theorem \textbf{C} or Theorem \textbf{D} in the way that we blow up its base points and get a situation when
the line bundle $\calO_{\tX_{1, new}}(- Z_{max, new})$ does not have a base point.
After that we can use Theorem \textbf{C} or Theorem \textbf{D} to get information about the base point structure of the line bundle $\calO_{\tX_{new}}(- Z_{max, new})$ or on the multiplicity
of the singularity $\tX$.
\end{remark}

\section{Corollaries of the main results}

\subsection{Relatively rational resolution graphs}

In this subsection we will define and characterise the natural analouge of rational singularities in the case of the relative setup.
Let us recall that a singularity $\tX$ with resolution graph $\mathcal{T}$ is called rational if $p_g(\tX) = 0$ and this property depends just on the topological type $\mathcal{T}$
being equivalent to $\min_{0 < l \in L} \chi(l) > 0$.

\medskip

In the following we prove an analouge theorem in the relative case (generalised for cycles), we fix some notations first.

Consider two rational homology sphere resolution graphs $\mathcal{T}_1 \subset \mathcal{T} $ with vertex sets $\calv_1 \subset \calv$, where $\calv = \calv_1 \cup \calv_2$  and a fixed singularity $\tX_1$ for the resolution graph $\mathcal{T}_1$, and cuts $D_{v_2}$ (for vertices $v_2$ which have a neighbour in $\calv_1$).

Assume furthermore that we have an effective cycle $Z$ on $\mathcal{T}$ and write $Z = Z_1 + Z_2$, where $|Z_1| \subset \calv_1$ and $|Z_2| \subset \calv_2$.

Consider a singularity with resolution $\tX$ and resolution graph $\mathcal{T}$, which has the subsingularity $\tX_1$ and is compatible with the cuts $D_{v_2}$ ($E_{v_2} \cap \tX_1 = 
D_{v_2}$ for vertices $v_2$ which have a neighbour in $\calv_1$).

With these notations we have the following theorem:

\begin{corollary}\label{relativerational}

We have $h^1(\calO_Z) = h^1(\calO_{Z_1})$ if and only if $(0, \calO_{Z_1})$ is relative dominant on the cycle $Z$, or equivalently:

\begin{equation*}
- h^1(\calO_{Z_1}) < \chi(l)-  h^1((Z-l)_1, \calO_{(Z-l)_1}(-l)),
\end{equation*}
for all $0 < l \leq Z$.

In particular this property is independent of the chosen analytic type $\tX$.
\end{corollary}

\begin{proof}

Assume first that $h^1(\calO_Z) = h^1(\calO_{Z_1})$ for some fixed analytical structure $\tX$ on $\mathcal{T}$.

In this case $(0, \calO_{Z_1})$ is relative dominant on the cycle $Z$, because we have obviously $\eca^{0, \calO_{Z_1}}(Z) \neq \emptyset$ (the empty divisor is in $\eca^{0, \calO_{Z_1}}(Z)$). On the other hand $\dim(r^{-1}( \calO_{Z_1})) = 0$, where $r$ is the restriction map $\pic^{0}(Z) \to \pic^{0}(Z_1)$ so the map $\eca^{0, \calO_{Z_1}}(Z) \to r^{-1}( \calO_{Z_1})$ is indeed dominant.

\medskip

In the other direction assume that  $(0, \calO_{Z_1})$ is relative dominant on the cycle $Z$.
Notice that $\dim(\eca^{0, \calO_{Z_1}}(Z)) = h^1(\calO_{Z_1}) - h^1(\calO_{Z_1}) + (0, Z) = 0$, and we know that the map $ \eca^{0, \calO_{Z_1}}(Z) \to r^{-1}( \calO_{Z_1})$ is dominant. We get from this that $\dim(r^{-1}( \calO_{Z_1})) = 0$, which means $h^1(\calO_Z) = h^1(\calO_{Z_1})$ and it proves the theorem completely.
\end{proof}

If we apply Corollary \ref{relativerational} in case of a very large cycle $Z$, then by the formal neighborhood theorem we get that the property $p_g(\tX) = p_g(\tX_1)$ is independent
of the analytic type of $\tX$.
It holds if and only if:

\begin{equation*}
- p_g(\tX_1) < \chi(l)-  h^1(\calO_{\tX_1}(-l)),
\end{equation*}
for all $0 < l \in L$.

\begin{defn}
If the property above holds, then we call the resolution graph $\mathcal{T}$ relatively rational to the singularity $\tX_1$ and the cuts $D_{v_2}$ on it.
\end{defn}

\subsection{Possible geometric genera of elliptic graphs}

In the following as another corollary we reprove the classification of the possible geometric genuses for a minimal, rational homology sphere elliptic resolution graph $\mathcal{T}$.
We will prove in some sense this theorem in much higher generality in a following manuscript, namely we will prove that the possible geometric genuses for a rational homology sphere resolution graph is an interval of integers.

Notice that from \cite{NNA3} we know that such a rational homology sphere elliptic graph contains a largest numerically Gorenstein subgraph $\mathcal{T}'$, such that if there are two singularities $\tX' \subset \tX$ corresponding to them, then $p_g(\tX') = p_g(\tX)$.

It means that we can asume in the following that the resolution graph $\mathcal{T}$ is numerically Gorenstein, and consider its elliptic sequence $\calv = B_0, B_1, \cdots B_m$.
Let us recall that if $0 \leq i \leq m$, then $B_i$ is a numerically Gorenstein elliptic subgraph, whose elliptic sequence is $B_i, B_{i+1}, \cdots, B_m$ and it's maximal geometric genus is $m-i+1$ by
\cite{weakly}. Furthermore also by \cite{weakly}, if there is a singularity $\tX_i$ with resolution graph $\mathcal{T}_i$ and $p_g(\tX_i) = m-i+1$, with subsingularities $\tX_{i+1}, \cdots, \tX_m$, then $p_g(\tX_j) = m-j+1$, where $i \leq j \leq m$ and $\tX_i$ is Gorenstein.

We have the following theorem with this setup:

\begin{theorem}
Let $\mathcal{T}$ be a minimal, elliptic, rational homology sphere, numerically Gorenstein resolution graph with elliptic sequence $\calv = B_0, B_1, \cdots B_m$, then the possible geometric genera of normal surface singularities with resolution graphs $\mathcal{T}$ are $1, 2, \cdots, m+ 1$.

More presicely, if $0 \leq i \leq m$ and $\tX_i$ is a Gorenstein singularity with resolution graph $B_i$ such that $p_g(\tX_i) = m-i+1$ and $\tX$ is a relatively generic singularity with resolution graph $\mathcal{T}$ corresponding to the subsingularity $\tX_i$ and generic cuts, then $p_g(\tX) = m-i+1$.
\end{theorem}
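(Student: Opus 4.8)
The plan is to split the statement into two halves. That the possible geometric genera are contained in $\{1,2,\dots,m+1\}$ is classical for a minimal elliptic rational homology sphere graph: $p_g(\tX)\ge 1$ because $\mathcal{T}$ is elliptic (not rational), and $p_g(\tX)\le m+1$ is the standard bound in terms of the length of the elliptic sequence (see \cite{NNA3} and the references there). Thus the new content, and essentially all of the work, is the ``more precisely'' assertion. Fix $i$ with $0\le i\le m$ and, using the facts recalled immediately before the theorem, fix a Gorenstein singularity $\tX_i$ with resolution graph $B_i$ and $p_g(\tX_i)=m-i+1$; let $\tX$ be relatively generic on $\mathcal{T}$ with respect to $\tX_i$ and generic cuts. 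We must show $p_g(\tX)=m-i+1$.

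One inequality is free: for $Z\gg 0$ on $\mathcal{T}$ with $Z_i:=Z|_{B_i}$, the surjection $\calO_Z\to\calO_{Z_i}$ induces a surjection $H^1(\calO_Z)\to H^1(\calO_{Z_i})$ (a curve has no $H^2$), so $p_g(\tX)=h^1(\calO_Z)\ge h^1(\calO_{Z_i})=p_g(\tX_i)=m-i+1$. For the reverse inequality I invoke the geometric genus formula for relatively generic singularities from the corollary above (equivalently, apply Theorem~\ref{relgen}(2) to $\calO_{\tX}(-E)$ on the cycle $Z-E$, $Z\gg 0$):
\[
p_g(\tX)=1-\min_{E\le l}\bigl(\chi(l)-h^1(\calO_{\tX_i}(-l))\bigr),
\]
the minimum over effective cycles $l\ge E$ on $\mathcal{T}$, with $\calO_{\tX_i}(-l):=\calO_{\tX}(-l)|\tX_i$ the restricted natural line bundle. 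Together with $p_g(\tX)\ge m-i+1$ this reduces the whole theorem to the single inequality
\[
h^1(\calO_{\tX_i}(-l))\ \le\ \chi(l)+\bigl(p_g(\tX_i)-1\bigr)\qquad\text{for every }l\ge E,
\]
which is exactly the relative analogue of rationality (the criterion of Theorem~\ref{relativerational}, saying that $(0,\calO_{Z_i})$ is relative dominant on $Z$); in particular it depends only on $\tX_i$ and the combinatorics of $\mathcal{T}$, not on the generic structure on $\calv\setminus\calv_i$.

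To prove this inequality I would proceed as follows. Because $\mathcal{T}$ is elliptic, $\chi(l)\ge 0$ for every effective cycle, so the inequality is only delicate for the ``critical'' cycles $l\ge E$ with $\chi(l)=0$, where it reads $h^1(\calO_{\tX_i}(-l))\le p_g(\tX_i)-1$, i.e. one unit below the generic maximum. The point is that a cycle $l\ge E$ with $\chi(l)=0$ is forced, by the structure of the elliptic form on $\mathcal{T}$, to dominate the elliptic cycle of $\mathcal{T}$, and hence its restriction $l|_{\calv_i}$ dominates the top elliptic cycle of the elliptic sequence $B_i\supset B_{i+1}\supset\cdots\supset B_m$ of $\tX_i$; twisting $\calO_{\tX_i}$ by a Chern class containing this elliptic cycle absorbs exactly one unit of $p_g(\tX_i)$. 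Making this precise is where the numerically Gorenstein hypothesis is used in earnest: one writes the Chern class as $R_i(-l)=\beta'-l|_{\calv_i}$ with $\beta'$ a bounded nonnegative combination of $E^*_w$ over the vertices $w\in\calv_i$ adjacent to $\calv\setminus\calv_i$, and then controls $h^1$ via the description of the subspaces $V_{\tX_i}(I)$ along the elliptic sequence (the theorem on $V_{\tX}(u)\subseteq V_{\tX}(v)$ and $\dim\im c^{-E^*_v}(Z)=\dim V_{\tX}(v)$ proved just above, the propagation lemma for $(-NE^*_v,\calO_{\tX'})$, and the description of the largest $p_g$-preserving numerically Gorenstein subgraph from \cite{NNA3}) together with Laufer/Gorenstein duality $V_{\tX_i}(I)^*=\Omega_{\tX_i}(I)$. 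I expect this step — the uniform loss of exactly one unit of $h^1$ for all critical cycles, in the presence of the extra ``boundary'' term $\beta'$ in the Chern class — to be the main obstacle; everything preceding it is formal manipulation of the already-established relatively generic formulas and standard facts about elliptic graphs.

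Finally, the inequality gives $p_g(\tX)\le m-i+1$, hence $p_g(\tX)=m-i+1$ for each $i$, so every value $1,2,\dots,m+1$ is realized; combined with the classical containment this yields the theorem. (By Theorem~\ref{relativerational} the argument in fact shows $p_g(\tX)=p_g(\tX_i)$ for every analytic structure $\tX$ over $\tX_i$ and these cuts, not only the relatively generic one.)
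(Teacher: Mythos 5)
Your reduction is sound and matches the paper's opening moves: the lower bound $p_g(\tX)\ge p_g(\tX_i)$ from the surjection on $H^1$, and the formula $p_g(\tX)=1-\min_{l\ge E}\bigl(\chi(l)-h^1(\calO_{\tX_i}(-l))\bigr)$, reduce everything to showing that $h^1(\calO_{\tX_i}(-l))\le \chi(l)+p_g(\tX_i)-1$ for the relevant cycles. But the proof stops exactly where the real content begins: you state that for critical cycles ($\chi(l)=0$) one must lose ``exactly one unit'' of $h^1$, sketch a route through elliptic cycles and the structure of the elliptic form, and then explicitly concede that making this precise is ``the main obstacle.'' That obstacle is the theorem; as written the argument is not a proof. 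Two further points are glossed over. First, your inequality is asserted for \emph{every} $l\ge E$, but for cycles $l$ with $R_i(l)\notin \calS'(\mathcal{T}_i)$ or with $H^0(\calO_{\tX_i}(-l))_{\reg}=\emptyset$ the a priori bound $h^1(\calO_{\tX_i}(-l))\le p_g(\tX_i)$ is not available; one must first invoke the refinement (the Remark after Theorem \ref{th:hegy2rel}) that the minimum is attained on cycles $l\in\calS'$ with $H^0(\calO_{\tX_i}(-l))_{\reg}\neq\emptyset$. Second, the claim that the key inequality ``depends only on $\tX_i$ and the combinatorics of $\mathcal{T}$'' is inaccurate: the restricted natural line bundle $\calO_{\tX_i}(-l)=\calO_{\tX}(-l)|\tX_i$ depends on the cuts $D_{v_2}$, and the genericity of the cuts is used essentially below.

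For the record, the paper closes the gap by a quite different mechanism from the one you propose. Once the minimum is restricted to cycles $l$ with $H^0(\calO_{\tX_i}(-l))_{\reg}\neq\emptyset$, the bundle $\calO_{\tX_i}(-l)$ lies in $\im\bigl(c^{-R(l)}\bigr)$ with $R(l)\in\calS'(\mathcal{T}_i)$. One then shows $R(l)\neq 0$: if it vanished, $\calO_{\tX_i}(-l)$ would have to be trivial, yet since $\tX_i$ is numerically Gorenstein one has $e_{\tX_i}(v)>0$ for a vertex $v\in|l|\setminus\calv_i$ adjacent to $\calv_i$, so for a \emph{generic} cut the restricted bundle of Chern class $0$ is nontrivial --- this is where genericity of the cuts enters. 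Since $\tX_i$ is numerically Gorenstein, $\dim\im\bigl(c^{-R(l)}\bigr)\ge 1$, and by the result of \cite{NNA3} that $h^1$ is constant on $\overline{\im(c^{-R(l)})}$ and equal to $p_g(\tX_i)-\dim\im(c^{-R(l)})$, one gets $h^1(\calO_{\tX_i}(-l))\le p_g(\tX_i)-1=m-i$ directly, with no analysis of which cycles satisfy $\chi(l)=0$ and no need for the elliptic-cycle domination argument you outline. If you want to salvage your write-up, replace your unproved ``absorption'' step by this Abel-map dimension argument (or actually carry out your elliptic-cycle analysis, which would be a genuinely different and substantially longer proof).
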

\begin{proof}

Consider a relatively generic singularity $\tX$ corresponding to the subsingularity $\tX_i$ and generic cuts on it.

By Theorem \textbf{B} we know that $p_g(\tX) = 1 - \min_{l \geq E}(\chi(l) - h^1(\calO_{\tX_i}(-l))$.
By Remark\ref{eleg}, in the minimum we are enough to take the cycles $l \geq E,  l \in S'$ such that $H^0( \calO_{\tX_i}(-l) )_{reg} \neq \emptyset$.
Assume in the following that $p_g(\tX) = 1 - \chi(l) +  h^1(\calO_{\tX_i}(-l))$, where $l \geq E, l \in S'$ and $H^0(\calO_{\tX_i}(-l))_{reg} \neq \emptyset$.

\medskip
Notice that $R(l) \in S'(\mathcal{T}_i)$,we claim first that $R(l) \neq 0$:

Indeed, if $R(l) = 0$ , then $\calO_{\tX_i}(-l) = $ should be the trivial line bundle. On the other hand we know that $l \geq E$, and there is a vertex $v \in \calv$, which has got a neighbour $w$ in $\mathcal{T}_i$. We know that $v \in |l|$, but we glue the exceptional divisor $E_v$ along a generic cut along $E_w$.
This is impossible, because $\tX_i$ is numerically Gorenstein, so we have $e_{\tX}(v) > 0$ from \cite{NNA3}. This means that if we deform the cut along we glue $E_w$, then the line bundle
$ \calO_{\tX_i}(-l) $ changes, so for a generic choice this line bundle is nontrivial, which is a contradiction.

It means that we have $R(l) \in S'(\mathcal{T}_i) \setminus 0$.
Notice that $\calO_{\tX_i}(-l)  \in \im(c^{-R(l)})$ and since $\tX_i$ is numerically Gorenstein we have $\dim( \im(c^{-R(l)})) \geq 1$.
We know from \cite{NNA3} that $h^1$ is uniform on $\overline{ \im(c^{-R(l)})}$ and it is $p_g(\tX_i) - \dim( \im(c^{-R(l)})  )$ which means that $h^1(\calO_{\tX_i}(-l) ) \leq m-i$.

\medskip

It means that we have $p_g(\tX) \leq 1- \chi(l) + m-i \leq m-i+1$.
On the other hand we know that $p_g(\tX) \geq p_g(\tX_i) = m-i+1$, which means that $p_g(\tX) = m-i+1$ and we are done.
\end{proof}

\end{document}